\newtheorem{theorem}{Theorem}[section]
\newtheorem{definition}[theorem]{Definition}
\newtheorem{maintheorem}{Theorem}
\newtheorem{lemma}[theorem]{Lemma}
\newtheorem{remark}[theorem]{Remark}
\DeclareMathSymbol{\shortminus}{\mathbin}{AMSa}{"39}
\newcommand{\interior}{{\rm int }}
\newcommand{\N}{\mathbb{N}}
\newcommand{\e}{\varepsilon}
\newcommand{\rah}{\stackrel{\Phi}{\rightarrow}}
\begin{document}
\title[Ergodic measures with infinite entropy]{Ergodic measures with infinite entropy}

\author{Eleonora Catsigeras}

\author{Serge Troubetzkoy}

\address{Instituto de Matem\'atica y Estad\'istica \lq\lq Prof.\ Ing.\  Rafael Laguardia\rq\rq \  (IMERL), Universidad de la Rep\'ublica, Av.\ Julio Herrera y Reissig 565, C.P. 11300, Montevideo, Uruguay}
\email{eleonora@fing.edu.uy}
\urladdr{http:/fing.edu.uy/{\lower.7ex\hbox{\~{}}}eleonora}

\address{Aix Marseille Univ, CNRS, Centrale Marseille, I2M, Marseille, France}
\address{postal address: I2M, Luminy, Case 907, F-13288 Marseille Cedex 9, France}
\email{serge.troubetzkoy@univ-amu.fr}
\urladdr{http://www.i2m.univ-amu.fr/perso/serge.troubetzkoy/}

\thanks{
 We thank the anonymous referee for his careful reading of our text, and suggesting many improvements.
We gratefully acknowledge support of the project   \lq\lq Sistemas Din\'{a}micos\rq\rq \  by CSIC, Universidad de la Rep\'{u}blica (Uruguay).
The project leading to this publication has also received funding Excellence Initiative of Aix-Marseille University - A*MIDEX and Excellence Laboratory Archimedes LabEx (ANR-11-LABX-0033), French ``Investissements d'Avenir'' programmes.}

\begin{abstract}
We construct ergodic, probability measures with infinite metric entropy for  generic continuous maps and  homeomorphisms on  compact manifolds. We also construct sequences of such measures that converge to a zero-entropy measure.

\end{abstract}\maketitle

\section{Introduction}\label{s1}
Let $M$ be a $C^1$ compact manifold of finite dimension $m \geq 1$, equipped with  a Riemannian metric dist.   The manifold $M$ may or may not have  boundary.
Let  $C^0(M)$ be  the space of   continuous maps $f: M \rightarrow M$ with the metric:
$$\|f-g\|_{C^0} := \max_{x \in M} {{\rm  dist}}(f(x), g(x)), \qquad \forall \ f,g \in C^0(M).$$
We denote by ${{\rm Hom}}(M)$  the space of homeomorphisms $f: M \rightarrow M$ with the metric:
$$\|f-g\|_{{{\rm Hom}}} := \max \Big \{\|f-g\|_{C^0}, \ \|f^{-1} - g^{-1}\|_{C^0} \Big \} \ \ \   \forall \  f,g \in {{\rm Hom}}(M).$$

We note that the topology induced in Hom$(M)$  by the above metric is the subspace topology induced by $C^0(M)$. Nevertheless, the metrics are different.

  Since the metric spaces $C^0(M)$ and ${{\rm Hom}}(M)$ are complete, the Baire category theorem holds, namely the countable intersection of open dense sets is dense.
A subset ${\mathcal S} \subset C^0(M)$ (or ${\mathcal S} \subset {{\rm Hom}}(M)$) is called \em a $G_{\delta}$-set   \em  if it is the countable intersection of open subsets of $C^0(M)$ (resp.\ $ {{\rm Hom}}(M)$).
We say that  a property  $P$ of the maps $f \in C^0(M)$ (or $f \in {{\rm Hom}}(M)$) is  \em generic, \em or that   generic maps  satisfy $P$, if the set of maps  that satisfy $P$ contains a \em dense   $G_{\delta}$-set \em in $C^0(M)$ (resp.\ $ {{\rm Hom}}(M)$).

The main result of this article is the following theorem.

\begin{maintheorem}
\label{Theorem1}
 The generic map $f \in C^0(M)$  has an ergodic Borel probability measure $\mu$ such that  $h_{\mu}(f) = + \infty$
 and there exists $p \ge 1$ such that $\mu$ is mixing for the map $f^p$.
\end{maintheorem}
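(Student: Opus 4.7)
The plan is to exhibit a dense $G_\delta$ subset $\mathcal{G} \subset C^0(M)$ each of whose elements admits the required measure. The proof decomposes into a \emph{density} step, which builds an explicit map with an ergodic, $f^p$-mixing invariant measure of infinite entropy, and a \emph{$G_\delta$} step, which encodes the relevant structure as a countable intersection of $C^0$-open conditions.

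For density, given $f_0 \in C^0(M)$ and $\varepsilon > 0$, I would choose a closed topological ball $\bar B \subset M$ of diameter less than $\varepsilon$, arrange inside $\bar B$ a pairwise disjoint sequence of closed topological sub-balls $\{B_i\}_{i \geq 1}$, and modify $f_0$ on a neighbourhood of $\bar B$ (fixing the boundary values) to produce $f \in C^0(M)$ with $\|f-f_0\|_{C^0} < \varepsilon$ and an integer $p \geq 1$ such that for every $i \geq 1$ the image $f^p(B_i)$ covers $\overline{\bigcup_j B_j}$. A standard coding argument yields a compact $f^p$-invariant set $\Lambda$ together with a continuous factor map $\pi : (\Lambda, f^p) \to (\mathbb{N}^{\mathbb{Z}_+}, \sigma)$ onto the full one-sided shift on a countable alphabet. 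Pick a Bernoulli measure $\nu_0 = \bigotimes_{\mathbb{N}}(p_i)$ on $\mathbb{N}^{\mathbb{Z}_+}$ with $p_i \propto 1/(i \log^2 i)$, so that $\sum_i -p_i \log p_i = +\infty$; this measure is $\sigma$-ergodic, $\sigma$-mixing and of infinite entropy. Pulling $\nu_0$ back under $\pi$ and averaging the resulting $f^p$-invariant measure around its length-$p$ $f$-orbit gives an $f$-invariant Borel probability $\mu$ that is $f$-ergodic, mixing under $f^p$, and satisfies $h_\mu(f) = +\infty$.

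For the $G_\delta$ step, for each $N \geq 1$ let $\mathcal{O}_N \subset C^0(M)$ be the set of maps $f$ admitting pairwise disjoint closed topological balls $C_1, \ldots, C_N$ of diameter less than $1/N$ and an integer $p$ such that $f^p$ sends each $C_i$ across $\bigcup_{j=1}^N C_j$ in the strong topological sense detected by a $C^0$-stable covering device (Brouwer degree, or Kennedy--Yorke style cross-cuts). Such a covering is $C^0$-open, so $\mathcal{O}_N$ is open; density follows from the construction above, restricted to the first $N$ sub-balls. Hence $\mathcal{G} = \bigcap_N \mathcal{O}_N$ is a dense $G_\delta$.

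The main obstacle is the jump from ``horseshoes of arbitrarily large topological entropy'', which is immediate for $f \in \mathcal{G}$, to ``a single ergodic invariant measure of infinite entropy'': metric entropy is not lower semicontinuous and the $N$-horseshoes at different scales are a priori unrelated. I would resolve this by refining the $\mathcal{O}_N$ so as to insist that the witnessing $N$-horseshoe be the initial segment of a nested tower --- the balls $C_i^{(N+1)}$ for $i \leq N$ coincide with the $C_i^{(N)}$ and only one new ball is added at step $N+1$, with a compatible covering relation. Density of the refined open sets still follows from the explicit construction (which produces the entire countable tower at once), so for every $f$ in the resulting dense $G_\delta$ the nested horseshoes assemble into a genuine countable-alphabet Markov structure, on which the density-step argument manufactures the desired $\mu$. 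Checking that these compatibility conditions are genuinely $C^0$-open is the technical heart of the argument and is what forces the covering relations to be witnessed by purely topological, perturbation-stable devices.
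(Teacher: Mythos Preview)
Your overall architecture---build a dense set of maps carrying an ``infinite horseshoe'' supporting a mixing measure of infinite entropy, encode the structure as a countable intersection of $C^0$-open conditions, then average the measure around a periodic orbit of boxes---is the same as the paper's.  Two steps in your sketch, however, are not yet proofs.

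\textbf{Lifting the measure.}  You write ``pulling $\nu_0$ back under $\pi$'', but $\pi:\Lambda\to\mathbb{N}^{\mathbb{Z}_+}$ is a factor map, so measures push forward along it, not back.  A general invariant lift of $\nu_0$ to $\Lambda$ need not be ergodic or mixing; to inherit these you need $\pi$ to be a conjugacy, i.e.\ the cylinders $\bigcap_{k\le n} f^{-pk}(B_{i_k})$ must shrink to points.  That is a diameter condition on iterated intersections which does \emph{not} follow from the bare covering relation $f^p(B_i)\supset\overline{\bigcup_j B_j}$; it has to be built into the construction of $f$ and into the definition of the open sets $\mathcal{O}_N$.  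The paper never lifts from a symbolic factor: its ``model'' maps carry, by definition, nested families of atoms whose diameters tend to zero, and the measure $\nu$ is defined directly on the resulting Cantor set by assigning equal mass $1/\#\mathcal{A}_n$ to each atom of generation $n$; invariance, mixing, and infinite entropy are then checked by explicit combinatorics (Lemmas~3.9--3.11).

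\textbf{Compatibility of the towers.}  You correctly identify this as the crux and then defer it.  Each $\mathcal{O}_N$ is existentially quantified over the witnessing balls, so a map in $\bigcap_N\mathcal{O}_N$ has finite towers of every depth, but a priori these towers are unrelated and no countable-alphabet structure emerges.  Your proposed fix---require the level-$(N{+}1)$ balls to extend the level-$N$ ones---is a constraint on \emph{witnesses}, not an intrinsically open constraint on $f$, and you give no mechanism to convert it into one.  The paper's solution is to hardwire the nesting into the very definition of the structure: atoms of generation $n{+}1$ are, \emph{by definition}, contained in generation-$n$ atoms and satisfy a prescribed combinatorial covering rule (Definition~2.3), so the single open condition ``there exist atoms up to generation $n$ with $\max_{A\in\mathcal{A}_n}\mbox{diam}\,A<2^{-n}$'' already encodes all the compatibility up to level $n$.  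Showing that the resulting $G_\delta$ family $\mathcal{H}$ is nonempty---indeed, that \emph{every} relative homeomorphism of the cube can be post-composed with a boundary-fixing homeomorphism to land in $\mathcal{H}$ (Lemma~2.6)---occupies essentially all of Section~2 and is the technical core of the paper; it is exactly the work your last paragraph acknowledges but leaves open.

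In short, your full countable-alphabet shift is conceptually simpler than the paper's hierarchical model, but the model's baroque combinatorics are not gratuitous: they are what makes the $G_\delta$ conditions genuinely nested and what allows the measure to be built intrinsically rather than lifted.
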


\noindent{\bf Remark.} In the case that $M$ is a compact interval,  Theorem \ref{Theorem1} was proved in  {\cite[Theorem 3.9 and p.33, paragraph 2]{CT2017}}. { The statements and proofs of \cite{CT2017}  also hold for continuous maps of the circle $S^1$. In fact, each $f\in C(S^1)$ can be represented by a continuous map $f$ in $[0,1]$ such that $f(0)=f(1)$. In the proof of genericity of the  properties studied in  \cite{CT2017},  no restrictions on the images of the endpoints  $0$ and $1$ are imposed. In particular the proof  of the denseness condition was obtained by perturbing the map only in the interior of a finite number of compact subintervals contained in $[0,1]$. 
Finally,  if the one-dimensional compact manifold $M$ is not connected, the arguments of \cite{CT2017} applied to a recurrent connected component of $M$,  extend the results to $C(M)$.
This is why in this paper we will prove Theorem \ref{Theorem1} only for $m$-dimensional manifolds for $m \geq 2$.}

\vspace{.15cm}

Yano proved that generic continuous maps of compact manifolds with or without boundary have infinite topological entropy \cite{Yano}. Therefore, from the variational principle, there exists  invariant measures with  metric entropies as large as required. Nevertheless, this property alone does not  imply the existence of invariant measures with infinite metric entropy.   In fact, it is well known that  the metric entropy function $\mu \rightarrow h_{\mu}(f)$ is not upper semi-continuous for $C^0$-generic systems.  Moreover, we prove that it is \em strongly \em non upper semi-continuous in the following sense:

\begin{maintheorem}
\label{Theorem2}
For a generic map  $f \in C^0(M)$  there exists a sequence of ergodic measures $\mu_n$ such that for all $n \ge 1$ we have
$h_{\mu_n}(f) = +\infty$ and  $$ {\lim _{n \rightarrow +\infty}\!\!\! }^*\,  \mu_n = \mu  \mbox{  with  } h_{\mu }(f)= 0,$$
  where $\lim^*$ denotes the limit in the space of probability measures endowed with the weak$^*$ topology.
\end{maintheorem}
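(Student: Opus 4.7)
The plan is to combine Theorem~\ref{Theorem1} with a \emph{localized} version of its proof, producing ergodic infinite-entropy measures concentrated on arbitrarily small neighborhoods of a fixed periodic orbit; the zero-entropy limit $\mu$ is then the atomic measure on that orbit.

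First I would show that the subset of $f \in C^0(M)$ admitting a topologically persistent periodic orbit---for instance an attracting periodic orbit of some iterate $f^k$---contains an open dense set. Any $f$ can be $C^0$-approximated by a perturbation supported in a small ball that creates an attracting fixed point of some iterate; such an orbit is preserved by further $C^0$-small perturbations and depends continuously on the map in a neighborhood. Fixing such an $f$ and a persistent orbit $\mathcal{O}(f) = \{p_0(f),\dots,p_{k-1}(f)\}$, the atomic measure $\mu(f) := \frac{1}{k}\sum_{i=0}^{k-1} \delta_{p_i(f)}$ is $f$-invariant, ergodic and has zero entropy; it will be the limit measure.

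Next I would establish a localized analogue of Theorem~\ref{Theorem1}: for each $n \ge 1$, the set of maps $g$ in a neighborhood of $f$ admitting an ergodic $g$-invariant probability measure $\nu_n$ with $h_{\nu_n}(g) = +\infty$ and $\mathrm{supp}(\nu_n) \subset \bigcup_{i=0}^{k-1} B(p_i(g), 1/n)$ contains a dense $G_\delta$ subset. The argument would mimic the proof of Theorem~\ref{Theorem1}, but every perturbation employed there would be required to have support in a small tubular neighborhood of $\mathcal{O}(g)$ contained in the prescribed open set, and to vanish identically on $\mathcal{O}(g)$, so as to keep the persistent orbit intact while installing the rich new dynamics in its vicinity.

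Granting this localization, a diagonal intersection over $n \ge 1$ yields, on a residual set of $f$, a sequence $(\mu_n)$ of ergodic $f$-invariant probability measures with $h_{\mu_n}(f) = +\infty$ and $\mathrm{supp}(\mu_n) \subset \bigcup_i B(p_i(f), 1/n)$. Any weak$^*$-limit point of $(\mu_n)$ is an $f$-invariant probability measure supported on the single periodic orbit $\mathcal{O}(f)$, hence must equal $\mu(f)$; therefore $\mu_n \to \mu(f)$ in the weak$^*$ topology, which is the desired conclusion. The main obstacle will be establishing the localized version of Theorem~\ref{Theorem1}: one must revisit the Baire-category scheme producing the ergodic infinite-entropy measure and verify that every perturbation in that scheme can be arranged to be supported inside an arbitrarily prescribed small neighborhood of a prescribed periodic orbit while fixing the orbit pointwise, so that both the prescribed orbit and the localization of the support of the new measure survive the limit.
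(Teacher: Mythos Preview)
Your strategy is essentially the paper's own: localize the construction of Theorem~\ref{Theorem1} inside shrinking neighborhoods of a robust periodic orbit and intersect countably many dense $G_\delta$ sets so that the resulting infinite-entropy ergodic measures accumulate on the atomic measure supported on that orbit. The paper packages this via the notion of a \emph{good sequence of periodic shrinking boxes} (Definition~\ref{definitionCleanSequence} and Lemma~\ref{lemma4}): nested periodic shrinking boxes $H_0\supset H_1\supset\cdots$ of period $p$ collapsing to a periodic point $x_0$, together with boxes $K_n\subset H_{n-1}$ whose return maps are conjugate to models.

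One technical point is worth flagging. Your outline relies on an attracting periodic orbit $\mathcal{O}(f)$ that ``depends continuously on the map''; in the $C^0$ category this is delicate, since a $C^0$-small perturbation of a topological attractor need not have a unique (or continuously varying) periodic point in the trapping region---Brouwer only gives existence. The paper sidesteps this by never tracking a moving periodic point: the localization is expressed through the fixed nested boxes $H_n$ (which \emph{are} $C^0$-robust as periodic shrinking boxes), and the periodic point $x_0$ appears only at the end as $\bigcap_n H_n$. Likewise, the $G_\delta$ property is obtained not for the condition ``there exists an ergodic infinite-entropy measure with small support'' directly, but for the stronger and manifestly $G_\delta$ condition ``there exists a periodic shrinking box inside $H_{n-1}$ whose return map is conjugate to a model'' (cf.\ Remark~\ref{RemarkHisG_delta} and Lemma~\ref{Lemma2}). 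If you rewrite your outline with boxes in place of balls around $p_i(g)$, it becomes precisely the paper's argument.
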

{Theorem \ref{Theorem2}  holds for any $m$-dimensional manifold, including $m = 1$. In this paper we will prove it for $m \geq 2$, but the proof for $m=1$ is easily obtained by repeating our proof after some trivial substitutions that are explained at the beginning of Section \ref{sectionCleanSequences}.

 Even if we had    a priori some   $f$-invariant measure $\mu$ with infinite metric entropy, we do not know if this property alone implies the existence of ergodic measures with infinite metric entropy as Theorems \ref{Theorem1} and \ref{Theorem2} state. Actually,  if $\mu$ had  infinitely many ergodic components, the  proof   that the metric entropy of at least one of those ergodic components must be larger or equal   than the entropy of $\mu$, uses the upper semi-continuity of the metric entropy function (see for instance \cite[Theorem 4.3.7, p. 75]{Keller}).

 Yano also proved that generic homeomorphisms on manifolds of dimension 2 or larger, have infinite topological entropy \cite{Yano}. Thus one wonders if Theorems \ref{Theorem1} and \ref{Theorem2} hold also for  homeomorphisms.  We give a positive answer to this question for $m \geq 2$.  If $M$ is one-dimensional  then a homeomorphisms of $M$ has zero topological entropy, so the following two theorems do not hold for one-dimensional manifolds.

\begin{maintheorem}
\label{Theorem3}
If  $\text{dim}(M) \ge2$, then the generic homeomorphism $f \in {{\rm Hom}}(M)$  has an ergodic Borel probability measure $\mu$
satisfying   $h_{\mu}(f) = + \infty$ and there exists $p \ge 1$ such that $\mu$ is mixing for the map $f^p$.
\end{maintheorem}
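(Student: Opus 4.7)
The plan is to adapt the proof of Theorem \ref{Theorem1} to the $\mbox{Hom}$ topology. Every surgery performed on maps in that proof can, when $\dim(M)\geq 2$, be realized by a homeomorphism whose inverse is also controlled in $C^0$; this is the only new point one must verify. Accordingly the argument splits into a model-building step, a perturbation lemma in the $\mbox{Hom}$ topology, and a Baire category argument.

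First I would construct once and for all a model homeomorphism $\phi:[0,1]^m\to[0,1]^m$ equal to the identity near $\partial[0,1]^m$ and admitting an ergodic Borel probability measure $\mu_\phi$ with $h_{\mu_\phi}(\phi)=+\infty$ that is mixing for some iterate $\phi^p$. Such a $\phi$ would be built as a tower of horseshoe-like blocks of unboundedly increasing complexity; because $m\geq 2$, every block can be realized by an orientation-preserving homeomorphism of a disk rather than only by a continuous surjection, and the identity-near-boundary property allows $\phi$ to be implanted into any small coordinate chart of $M$. Second, I would prove the perturbation lemma: given $f\in\mbox{Hom}(M)$ and $\e>0$, there exist $g\in\mbox{Hom}(M)$ with $\|g-f\|_{Hom}<\e$, a small cube $K\subset M$ with $g(K)=K$, and a homeomorphism $\psi:[0,1]^m\to K$ such that $g|_K=\psi\circ\phi\circ\psi^{-1}$. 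The construction picks an almost-recurrent point of $f$, encloses a small cube near the recurrence point inside a long flowbox, and surgically replaces $f$ inside that cube by the implanted copy of $\phi$; the identity-near-boundary property of $\phi$ lets the replacement be glued to the unchanged $f$ outside the cube while keeping both $\|g-f\|_{C^0}$ and $\|g^{-1}-f^{-1}\|_{C^0}$ below $\e$.

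Finally, for each $k\in\mathbb{N}$ I would define the set $\mathcal{U}_k\subset\mbox{Hom}(M)$ of homeomorphisms admitting an invariant cube on which the restriction is conjugate to the finite truncation $\phi_k$ of $\phi$ (the tower with only its first $k$ blocks). Openness follows from structural stability of the finitely many horseshoe blocks composing $\phi_k$, and density follows from the perturbation lemma. To extract from $\bigcap_k\mathcal{U}_k$ a single ergodic measure of infinite entropy, the perturbation lemma must be refined so that a single cube $K$ and single conjugating homeomorphism $\psi$ work for every $k$ simultaneously; the pushforward $\psi_{*}\mu_\phi$ is then an ergodic $f$-invariant Borel probability measure on $M$ with $h_{\psi_{*}\mu_\phi}(f)=+\infty$ and mixing for $f^p$, since ergodicity, metric entropy, and mixing of an iterate are all conjugacy invariants.

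The main obstacle is the refined Hom-perturbation lemma: simultaneously controlling $g$ and $g^{-1}$ in $C^0$ is exactly what restricts the argument to $\dim(M)\geq 2$, and engineering one conjugation that works for every truncation at the same time is what forces the model $\phi$ to be built with a very rigid global structure. Once these two difficulties are resolved, the remainder of the argument is a verbatim transcription of the proof of Theorem \ref{Theorem1}.
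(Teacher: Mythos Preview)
Your strategy—build a cube model with an infinite-entropy mixing measure, implant it by a small $\mathrm{Hom}$-perturbation, then run Baire—is the paper's strategy, but the implementation you sketch diverges from the paper in two places, and one of them is a genuine gap.

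The paper does not implant a single pre-built model $\phi$ equal to the identity near $\partial D^m$. Its key device (Lemma~\ref{LemmaConstruccionModelPsifisPhi}) is instead: given \emph{any} relative homeomorphism $h:D^m\to\mathrm{int}(D^m)$, there exists $\psi\in\mathrm{Hom}(D^m)$ with $\psi|_{\partial D^m}=\mathrm{id}$ such that $\psi\circ h$ is a model. When $K$ is a periodic shrinking box for $f\in\mathrm{Hom}(M)$ (so $f^p(K)\subset\mathrm{int}(K)$, obtained from a recurrent point as in Lemma~\ref{Lemma1b}), one applies this to $h=\phi\circ f^p|_K\circ\phi^{-1}$ and then modifies $f$ only on $W=f^{-1}(K)$ by post-composing with $\phi^{-1}\psi\phi$; because $\psi$ fixes $\partial D^m$ the result glues to a global homeomorphism $g$, and smallness of $K$ controls both $\|g-f\|_{C^0}$ and $\|g^{-1}-f^{-1}\|_{C^0}$. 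This is Lemma~\ref{Lemma2b}. Your plan, by contrast, demands $g(K)=K$ with $g|_K$ conjugate to a fixed $\phi$, which forces you to match a prescribed map to $f$ along $\partial K$ while keeping $g$ a global homeomorphism—precisely the surgery you leave unexplained. Note also the internal inconsistency: you write $g(K)=K$ (period $1$) but invoke a ``long flowbox'' (period $p>1$); in the paper the integer $p$ in the statement is the period of the shrinking box, and the return model $\Phi$ is itself mixing (Lemma~\ref{LemmaMain}).

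Your ``main obstacle'' (making one cube $K$ and one conjugacy serve all truncations) dissolves in the paper's setup without any appeal to structural stability. The defining conditions for atoms of each generation $n$ are manifestly open, so the family $\mathcal H$ of models is $G_\delta$ in $\mathrm{RHom}(D^m)$; and a given periodic shrinking box $K$ together with its chart $\phi$ persists on an open $\mathrm{Hom}$-neighborhood of $f$. Hence $\{g:\phi\circ g^p|_K\circ\phi^{-1}\in\mathcal H\}$ is already a local $G_\delta$ with the \emph{same} $K$ and $\phi$ throughout, and the Baire argument goes through directly. Pushing the model measure $\nu$ forward by $\phi^{-1}$ and averaging over the $p$ iterates as in \eqref{eqn15} then gives the $f$-ergodic $\mu$ with $h_\mu(f)=+\infty$, mixing for $f^p$.
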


\begin{maintheorem}
\label{Theorem4}
If  $\text{dim}(M) \ge2$,  then for a generic homeomorphism  $f \in {{\rm Hom}}(M)$    there exists a sequence of ergodic measures $\mu_n$ such that
  for all $n \ge 1$ we have
$h_{\mu_n}(f)  = +\infty$ and  $$ {\lim _{n \rightarrow +\infty}\!\!\! }^*\,  \mu_n = \mu  \mbox{  with  } h_{\mu }(f) = 0.$$
\end{maintheorem}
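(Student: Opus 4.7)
The plan is to imitate the proof of Theorem~\ref{Theorem2}, working in $\mathrm{Hom}(M)$ with $\dim(M)\geq 2$ and reusing the construction that established Theorem~\ref{Theorem3}. The key idea is to produce, for a generic $f$, a sequence of ergodic infinite-entropy measures $\mu_n$ whose supports shrink to a single periodic orbit of $f$; the weak-$*$ limit is then the (necessarily zero-entropy) atomic measure on that orbit. Concretely, for each $n\geq 1$ I would define $\mathcal{G}_n\subset\mathrm{Hom}(M)$ as the set of $f$ that admit a periodic point $p(f)$ and an ergodic $f$-invariant probability measure $\mu_n$ with $h_{\mu_n}(f)=+\infty$ and $\mathrm{supp}(\mu_n)\subset B(p(f),1/n)$. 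The goal is to show each $\mathcal{G}_n$ contains a dense $G_\delta$ set; the generic set of interest is then $\bigcap_{n\geq 1}\mathcal{G}_n$, on which one selects one $\mu_n$ per $n$ and observes $\mu_n\to\mu$ weak-$*$ with $\mu$ the uniform atomic measure on the orbit of $p(f)$, so $h_\mu(f)=0$.

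For the density of $\mathcal{G}_n$, I would fix $f\in\mathrm{Hom}(M)$ and $\varepsilon>0$, and first perturb $f$ by at most $\varepsilon/2$ to a homeomorphism possessing a periodic point $p$ (this is standard and itself generic when $\dim(M)\geq 2$, by Yano-type density of periodic orbits). Inside a small topological disk strictly contained in $B(p,1/n)$ and compatible with the period of $p$, I would then apply the local perturbation used in Theorem~\ref{Theorem3}, which is supported away from the boundary of the disk and therefore yields a homeomorphism $g\in\mathrm{Hom}(M)$ with $\|g-f\|_{\mathrm{Hom}}<\varepsilon$ that admits an ergodic infinite-entropy measure supported in $B(p,1/n)$. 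For the $G_\delta$ property, the infinite-entropy and support conditions are already encoded in the proof of Theorem~\ref{Theorem3} as the intersection over $k\geq 1$ of the open conditions ``there exists an embedded topological horseshoe of diameter less than $1/n$ and topological entropy at least $k$, persisting with the continuation of $p(f)$''; taking the intersection over $k$ is precisely a $G_\delta$ condition.

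The main obstacle is to verify the scale-invariance of the construction behind Theorem~\ref{Theorem3}: namely that the horseshoe/tower producing ergodicity, infinite entropy, and mixing for some power $f^p$ can be inserted inside an arbitrarily small ball while preserving all three properties simultaneously. Since that construction is local and the diameter of its support is a free parameter, this should go through with only cosmetic modifications, after which the proof concludes by pairing the nested supports $\mathrm{supp}(\mu_n)\subset B(p(f),1/n)$ with the trivial fact that a weak-$*$ limit of measures supported in shrinking balls around a periodic orbit is the equidistribution on that orbit, which has zero metric entropy.
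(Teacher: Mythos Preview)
Your proposal has a genuine gap in the way you decompose the generic set. By defining each $\mathcal{G}_n$ separately as ``there exists a periodic point $p(f)$ and an ergodic infinite-entropy measure supported near its orbit,'' you allow the periodic point to depend on $n$. For $f\in\bigcap_n\mathcal{G}_n$ you obtain, for each $n$, \emph{some} periodic point $p_n(f)$ and a measure $\mu_n$ supported in the $1/n$-neighborhood of its orbit, but nothing forces the $p_n(f)$ to coincide (or even to have bounded period, or to accumulate on a single orbit). Consequently the sequence $(\mu_n)$ need not converge at all. Your appeal to ``the continuation of $p(f)$'' does not rescue this: in $\mathrm{Hom}(M)$ periodic points are not hyperbolic and do not admit continuations under $C^0$ perturbation, so the $G_\delta$ argument you sketch, which relies on tracking a fixed periodic point through an open set of homeomorphisms, is not available.

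The paper avoids this by constructing a single coherent structure --- a \emph{good sequence} $\{K_n\}$ of periodic shrinking boxes nested inside auxiliary boxes $\{H_n\}$ of a common period $p$ with $\mathrm{diam}(H_n)\to 0$ --- and proving that possessing such a sequence (with model return maps on each $K_n$) is generic (Lemma~\ref{lemma4}). The nested boxes $H_n$ force $\bigcap_n H_n=\{x_0\}$ to be a single periodic point of period $p$, and all the supports $\bigcup_j f^j(K_n)$ are trapped inside $\bigcup_j f^j(H_{n-1})$, which shrinks to the orbit of $x_0$. The $G_\delta$ property then comes from the \emph{boxes} (whose periodic shrinking conditions are open), not from any continuation of a periodic point. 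This is exactly the coherence across scales that your independent $\mathcal{G}_n$'s lack; the ``scale-invariance'' you identify as the main obstacle is not the issue --- the issue is tying all scales to one limiting periodic orbit.
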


To prove Theorems \ref{Theorem1}, \ref{Theorem3} and \ref{Theorem4} in dimension two or larger, we construct a  family ${\mathcal H}$, called models, of continuous maps in the cube $[0,1]^m$, including some homeomorphisms of the cube onto itself,  which have a complicated behavior on a Cantor set (Definition \ref{DefinitionModel}).
{ To prove Theorem  \ref{Theorem2} in dimension one,  the family ${\mathcal H}$ of model maps in $M$  we use is  the set of continuous maps     that have an \lq\lq atom doubling cascade\rq\rq, according to \cite[Definition 35]{CT2017}.}

In any dimension $m\geq 1$, a periodic shrinking box is a compact set $K \subset M$  that is homeomorphic to the cube $[0,1]^m$ and such that for some $p \geq 1$:  $K, f(K), \ldots, f^{p-1}(K)$ are pairwise disjoint and $f^p(K) \subset {{\rm int}}(K)$ (Definition \ref{DefinitionPerShrBox}).

The main steps of the proofs of Theorems \ref{Theorem1} and \ref{Theorem3} are the following results.

\begin{description}
\item[\bf Lemma \ref{LemmaMain}]  \em  For $m \geq 1$, any model $\Phi \in {\mathcal H}$ in the cube $[0,1]^m$ has a $\Phi$-invariant mixing measure $\nu$ such that $h_{\nu}(\Phi) = +\infty$. \em

\item[\bf Lemmas \ref{Lemma1} and \ref{Lemma1b}]
\em For $m \geq 1$, generic maps in $C^0(M)$,  and generic homeomorphisms of $M$,  have a periodic shrinking box. \em

\item[\bf Lemmas \ref{Lemma2} and \ref{Lemma2b}] \em If $m \geq 1$  generic maps  $f \in C^0(M)$,  and if $m \geq 2$ also generic homeomorphisms of $M$, have a periodic shrinking box $K$ such that the return map $f^p|_K$  is topologically conjugated to a model $\Phi \in {\mathcal H}$. \em
\end{description}

{   We prove and use Lemma \ref{LemmaMain} only for $m \geq 2$ since the case $m=1$ was proven  in \cite[Theorem 38]{CT2017}. The other results in the above list will be fully proven here even in the case $m=1$ independently of \cite{CT2017}.}

\noindent A \em good sequence of periodic shrinking boxes \em is a sequence $  \{K_n\}_{n \geq 1} $  of   periodic shrinking boxes which accumulate (with  the Hausdorff distance)  on a periodic point $x_0$, and  moreover their iterates $f^{j}(K_n) $ also accumulate on the periodic orbit of $x_0$, uniformly for $j \geq 0$ (see Definition \ref{definitionCleanSequence}).
The main tools used in the proofs of  Theorems \ref{Theorem2} and \ref{Theorem4} are the statements of Theorems \ref{Theorem1} (for $m \geq 1)$ and \ref{Theorem3}, Lemmas \ref{Lemma1}, \ref{Lemma1b}, \ref{Lemma2} and \ref{Lemma2b},  together with

\begin{description}
\item[\bf Lemma \ref{lemma4}]
\em  For $m \geq 1$  a generic  map  $f \in C^0(M)$,  and for $m \geq 2$ a generic homeomorphism $f$  has a  good sequence $\{K_n\}$ of  boxes, such that the return map $f^{p_n}|_{K_n}$  is topologically conjugated to a   model $\Phi_n \in {\mathcal H}$. \em

\end{description}

\section{Construction of the family of  models.} \label{SectionModels}

We call a compact set $K \subset D^m:=[0,1]^m$ or more generally $K \subset M$ (where $M$ is an $m$-dimensional manifold with $m \geq 1$) \em a box \em  if it is homeomorphic to  $D^m$.
Models are certain continuous maps of   $D^m$ that we will define in this section.

 We denote by ${{\rm Emb}}(D^m)$ the space of embeddings $\Phi: D^m \rightarrow D^m$ (i.e., $\Phi$ is a homeomorphism onto its image  included in $D^m$),  with the  topology induced as a subspace of $C^0(D^m)$.

 \begin{definition}
 \label{definitionModelDim1}
 { \em For $m=1$  a model is a map that has  an \lq\lq atom doubling   cascade\rq\rq \,  according to \cite[Definition 35]{CT2017} and  the family  ${\mathcal H}$  is the set of all model maps.}\end{definition}

From here to the end of this section we  construct the family ${\mathcal H}$ of model map for $m \geq 2$.
 \begin{definition}
 {\bf  ($\Phi$-relation from a box to another).} \em \label{definition-hRelation}\hspace*{\fill} \\
  Let $\Phi \in C^0(D^m)$. Let $B,C \subset {{\rm int}}(D^m) $ be two  boxes. We write  $$B \rah C \mbox{ if }
  \Phi(B) \cap {{\rm int}}(C) \neq \emptyset.$$  Observe that this condition is open in $C^0(D^m)$.

  Let ${\mathcal A}$ be a finite family of boxes. Denote
    $${\mathcal A}^{2*}: = \{(B, C) \in {\mathcal A}^2: \ \ B \rah C\}, $$
$${\mathcal A}^{3*}: = \{(D,B, C) \in {\mathcal A}^3: \ \ D \rah B, \ \ \ B \rah C\}. $$
 \end{definition}

For all $n \geq 0$ we will define atoms of   generation $n $ for a map $\Phi    \in {C}^0(D^m)$ by Definition \ref{definitionAtomsGeneration-n}. 

\begin{figure}[bh]
\centering
\hspace{-0.2in}\vspace{0.2in}\includegraphics[scale=.40]{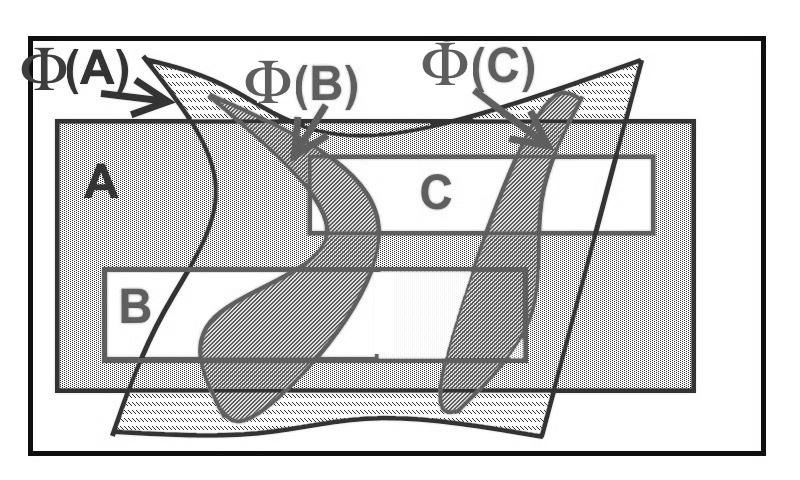}
\caption{The atom $A$ of generation 0 and two atoms $B, C$ of generation 1 for a map $\Phi$ of $D^2$. \label{FigureAtomsGen-0-1}}
\end{figure}

  \begin{definition} \label{definitionAtomsGeneration-n}
 {\bf Atoms of  generations $\mathbf{0 \le n \le k}$} \em (See Figure  \ref{FigureAtomsGen-n}) \hspace*{\fill} \\
Fix $\Phi \in C^0(D^m)$ and collections of boxes ${\mathcal A}_{0}, \mathcal A_1, \dots \mathcal A_k$ contained in the interior of $D^m$.
 For $n \ge 1$ and for $(D, B, C )  \in {\mathcal A}^{3 *}_{n-1}$  we define
 $$\Omega_{n}(B): = \{G \in   {\mathcal A}_{n} \colon G \subset {{\rm int}}(B)\},$$
 $$\Omega_{n}(D, B) := \{G \in   \Omega_{n}(B) \colon D \rah G\},$$
$$\Gamma_{n}(D, B, C) := \{G \in \Omega_{n}(D, B) \colon G \rah C\}.$$
Suppose that the following conditions hold for all $1 \le n \le k$;
\begin{enumerate}[i)]
\item  The family ${\mathcal A}_n$ consists of $2^{n^2}$  pairwise disjoint boxes.
\item  For all $B \in {\mathcal A}_n$ we have:
     $$ \#\{C \in {\mathcal A}_n \colon B \rah C\}= 2^n, \ \ \ \ \ \  \#\{D \in {\mathcal A}_n \colon D \rah B\}= 2^n.$$
\end{enumerate}

\begin{enumerate}[a)]
\item   $\#\Omega_n(B) = \#\Omega_n(B')   \  \forall \, B, B' \in {\mathcal A}_{n-1}, \ \ \mbox{and }$
 $
 {\mathcal A}_n = \bigcup_{B \in {\mathcal A}_{n-1}} \Omega_n(B).$

\item  For all  $(D,B) \neq (D',B') \in {\mathcal A}^{2*}_{n-1}$,

 $\#\Omega_n(D, B) = \#\Omega_n(D', B')   \  \mbox{and} \ \Omega_n(D, B) \cap \Omega_n(D', B')= \emptyset . \ \ $

 Besides, $
 \Omega_n(B) = \bigcup_{D: (D,B) \in {\mathcal A}^{2*}_{n-1}} \Omega_n(D, B)\  $ for all $B  \in {\mathcal A}_{n-1}. $

\item For all $(D,B,C) \neq (D', B', C') \in {\mathcal A}_{n-1}^{3*}$,
   $$\#\Gamma_n (D,B, C)= \#\Gamma_n (D',B', C') \ \mbox{ and } \ \Gamma_n (D,B, C)\cap \Gamma_n (D',B', C') = \emptyset,$$
and  for all $(D, B) \in {\mathcal A}_{n-1}^{2*}$,

   \begin{equation}
\label{eqn99a}\Omega_n(D,B) = \bigcup_{C \colon (B,C) \in {\mathcal A}_{n-1}^{2*}} \Gamma_n(D,B,C),\end{equation}

\item For each   $(D, B,C) \in {\mathcal A}_{n-1}^{3*}$ and  for each $G \in \Gamma_n(D, B, C)$
 $$\Phi(G) \cap E  = \emptyset \ \forall \ E  \in {\mathcal A}_n \setminus \Omega_n(B,C).$$
We  call the members of $\mathcal{A}_n$  \em atoms of generation $n$ \em or \em $n$-atoms, \em
 \end{enumerate}
  \end{definition}

\begin{figure}[t]
\centering
\hspace{-0.2in}\includegraphics[scale=.55]{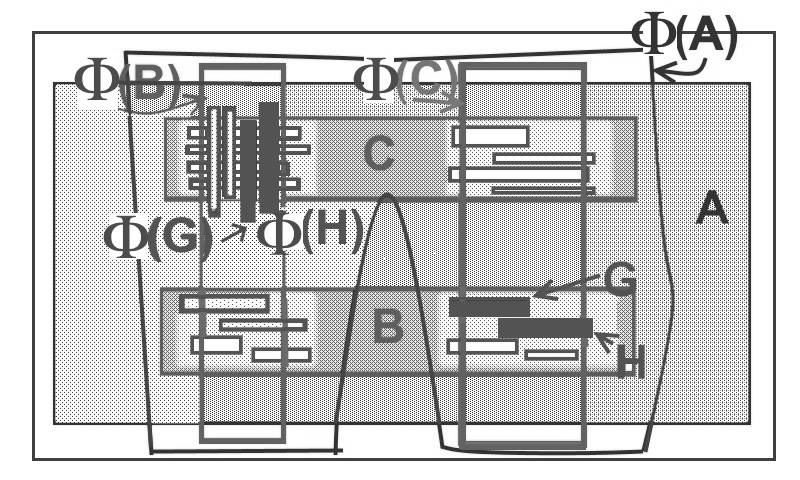}
\caption{An atom $A$ of generation 0, two atoms $B,C$ of generation 1, and  $16$ atoms of generation~2. In particular the two atoms $G, H$ of generation 2 satisfy $\Gamma_2(C,B,C) = \{G, H\}$.} \label{FigureAtomsGen-n}
\end{figure}

  \begin{remark}
  \label{remarkAtomsGeneration-n} \em
  From the   conditions (i), (ii) and (a) to (d) of Definition \ref{definitionAtomsGeneration-n} we deduce the following properties of the families of atoms for $\Phi \in C^0(D^m)$:

  \noindent{$\bullet$} $\#\Omega_n(B) =2^{2n-1}$ for all  $B \in {\mathcal A}_{n-1}$.  In fact, the families   $\Omega_n(B)$  are pairwise disjoint because any two different atoms of generation $n$ are disjoint. Therefore, from  condition a), we obtain  $$
  \#   {\mathcal A}_n  = (\#{\mathcal A}_{n-1}) (\#\Omega_n(B)) =2^{(n-1)^2}  (\#\Omega_n(B))  =     2 ^{n^2},$$ $$ \mbox{ hence} \ \#\Omega_n(B))=2^{2n-1}.  $$

 \noindent{$\bullet $}  $\# \Omega_n(D, B) = 2^n, $ for all $(D,B) \in {\mathcal A}_{n-1}^{2*}.$
   In fact, from condition b),
$$\Omega_n(B) = \bigcup_{D \colon (D,B) \in {\mathcal A}_{n-1}^{2*}} \Omega_n(D,B), \qquad  \forall \ B \in {\mathcal A}_{n-1}, $$
where the families of atoms in the above union are pairwise disjoint. Thus, for any $B \in {\mathcal A}_{n-1}$ we have
$$\#\Omega_n(B) =\big( \# \{D \in {\mathcal A}_{n-1} \colon D \rah B\} \big) \cdot \big (\# \Omega_n(D,B)\big) =
$$ $$
2^{n-1} \cdot \big (\# \Omega_n(D,B)\big)  = 2^{2n-1},
$$
hence $\big (\# \Omega_n(D,B)\big) = 2^n.$

\noindent{$\bullet$} $\#\Gamma_n (D,B, C) = 2\ \ \forall  \ (D,B,C) \in {\mathcal A}_{n-1}^{3*}.$ In fact, from conditions (ii) and (c), for each 2-tuple $(D,B) \in {\mathcal A}_{n-1}^{2*}$ the collection  $\Omega_n(D, B)$ is partitioned into $2^{n-1}$ pairwise disjoint sub-collections $\Gamma_n(D,B,C)$, where the atoms $C \in {\mathcal A}_{n-1}$ are such that $B \rah C$. Since $\#\Omega_n(D, B) = 2^n$ (proved above), we deduce  that
    $\#\Gamma_n (D,B, C) = 2$.

 For example, in Figure \ref{FigureAtomsGen-n} we have $ \Gamma_2(C,B,C) = \{F,G\}  $.

 \noindent{$\bullet$} As a straightforward consequence of conditions a), b) and c) we obtain \begin{equation}
\label{eqn99}
{\mathcal A}_n = \bigcup_{(D, B, C) \in {\mathcal A}_{n-1}^{3*} } \Gamma_n(D, B,C),\end{equation}
where the families of atoms in the union  are pairwise disjoint.

 \noindent{$\bullet$ } For each   $(D, B,C) \in {\mathcal A}_{n-1}^{3*}$,  for each $G \in \Gamma_n(D, B, C)$ and for all $E \in {\mathcal A}_n$, either
  $      G  \rah E$, and this occurs if and only if  $  E \in \Omega_n(B,C), $ or
  $ \Phi(G) \cap E  = \emptyset $, and this occurs if and only if  $E  \not \in  \Omega_n(B,C).$

 In fact, from condition d), if $\Phi(G) \cap E  \neq \emptyset$ then $  E \in \Omega_n(B,C) $. So, recalling condition (ii) of Definition \ref{definitionAtomsGeneration-n}, we obtain
 $$2^n = \#\{E \in {\mathcal A}_n \colon G  \rah E\} \leq $$
 $$\#\{E \in {\mathcal A}_n \colon \Phi(G) \cap E \neq \emptyset\}\leq \#\Omega_n(B,C)= 2^n.$$
 Hence, all the above inequalities are equalities and the assertion is proved.

\noindent{$\bullet$} For any pair $(G,E) \in {\mathcal A}_n^2$:
$$   G \rah E  \mbox{ if and only if } \ \exists \  (D, B,C) \in {\mathcal A}_{n-1} ^{3*} \mbox{ such that  }$$ $$ G \in \Gamma_n(D,B, C), \ E \in \Omega_n(B,C), $$
This is a   restatement of the above assertion.

\noindent{$\bullet$} $\#{\mathcal A}_n^{3*} = 2^{n^2 + 2n}$

In fact, all the $3$-tuples  $(D,B,C) \in {\mathcal A}_n^{3*}$ can be constructed by choosing freely $D \in {\mathcal A}_n$, later  choosing $B \in {\mathcal A}_n$ such that $D\stackrel{\Phi_n}{\rightarrow}B$, and finally   choosing  $C \in {\mathcal A}_n$ such that $B\stackrel{\Phi_n}{\rightarrow}C$. Taking into account   equalities  i) and ii) of Definition \ref{definitionAtomsGeneration-n},  we deduce
$$\#{\mathcal A}_n^{3*}= \#\{(D,B,C)\in {\mathcal A}_n^3\colon D\stackrel{\Phi_n}{\rightarrow}B, B \stackrel{\Phi_n}{\rightarrow}C \}= $$
$$(\#{\mathcal A}_n) \cdot (\#\{B \in {\mathcal A}_n \colon D\stackrel{\Phi_n}{\rightarrow}B\}) \cdot (\#\{C \in {\mathcal A}_n \colon B \stackrel{\Phi_n}{\rightarrow}C\}) = $$ $$ 2^{n^2} 2^n 2^n = 2^{n^2 + 2n}. $$
  \end{remark}

  \noindent {\bf Notation.}     In certain statements  we refer to families of atoms for several different maps.
When necessary we will use the following   notation:  $A \in {\mathcal A}_n(\Phi)$ or $(B, D) \in {\mathcal A}_n^{2*}(\Phi)$, etc., where $\Phi$ is the map to which the family of atoms is referred.

\begin{definition} {\bf (Models)}
\label{DefinitionModel} \em

We call  $\Phi \in C^0(D^m)$  \em a model \em  if
 $\Phi(D^m) \subset {{\rm int}}(D^m),$
and there exists a sequence  $\{{\mathcal A}_n\}_{n \geq 0}$
of finite families of pairwise disjoint boxes contained in ${{\rm int}}(D^m)$ that are atoms of generations $n \ge 0$ for $\Phi$ such that
\begin{equation}
\label{eqnLimitDiamAtoms=0}
\lim_{n \rightarrow + \infty} \max_{A \in {\mathcal A}_n} {{\rm diam}}A = 0.\end{equation}
Denote by ${\mathcal H}$  the family of  all the models in $C^0(D^m)$.

\end{definition}

  For each fixed $n \geq 1$ the four conditions a) to d) of Definition
   \ref{definitionAtomsGeneration-n}, are open conditions. So, for fixed $n \geq 0$, and for any given map $\Phi$ having families ${\mathcal A}_0, {\mathcal A}_1, \ldots, {\mathcal A}_n$ of atoms of generations $0, 1, \ldots, n$, the set of maps that have the same families of atoms of generation up to $n$ as $\Phi$ (for that fixed $n$ and not necessarily  for all $n$) is an open set.
   Moreover,  the condition   $\Phi(D^m) \subset {{\rm int}}(D^m)$ is open.
   \begin{definition}
   \label{Definition HsubPhi} \em
 For any  $\Phi \in \mathcal H$, we denote by   ${\mathcal H}_{\Phi} $ the family of maps in $C^0(D^m)$ that have the same atoms of all generations as $\Phi$. Note that ${\mathcal H}_{\Phi} \subset {\mathcal H} $.
   \end{definition}

  \begin{remark}
\label{RemarkHisG_delta} \em
    We deduce that,
   for any given $\Phi \in \mathcal H$, the family $\mathcal H_{\Phi} \subset {\mathcal H} $ is a nonempty $G_{\delta}$-set  in $C^0(M)$. In other words, $\mathcal H$ contains a nonempty $G_{\delta}$-set if it is nonempty.

On the other hand, if there exists $\Phi \in {\mathcal H} \cap {{\rm Emb}}(D^m)$ then $H_{\Phi}$ is not necessarily composed by embeddings of $D^m$. Nevertheless, it contains $H_{\Phi} \cap {{\rm Emb}}(D^m)$, which is a nonempty $G_{\delta}$-set in ${{\rm Emb}}(D^m)$. Thus
${\mathcal H} \cap {{\rm Emb}}(D^m)$   contains a nonempty  $G_{\delta}$-set in ${{\rm Emb}}(D^m)$ if ${\mathcal H} \cap {{\rm Emb}}(D^m) \neq \emptyset$.

Note that the nonempty $G_{\delta}$-set ${\mathcal H}_{\Phi}$ (if $\Phi \in {\mathcal H} \neq \emptyset$) is not necessarily dense in $C^0(D^m)$!
\end{remark}

\noindent {\bf Construction of models.} \label{SectionFamilyModelsIsGood}\\
The rest of this section is dedicated to the proof of the following lemma.

\begin{lemma}
    \label{LemmaModelHEmbNonempty}
The family ${\mathcal H}$ of models contains the nonempty $G_{\delta}$-set ${\mathcal H}_{\Phi}$ (defined in
Definition \ref{Definition HsubPhi}) in $C^0(D^m)$ for any chosen $\Phi \in {\mathcal H}$.

 The family ${\mathcal H} \cap {{\rm Emb}}(D^m)$ contains the nonempty  set ${\mathcal H}_{\Phi}\cap {\rm Emb}(D^m)$, which is a $G_{\delta}$-set in ${\rm Emb}(D^m)$, for any chosen $\Phi \in {\mathcal H}\cap {{\rm Emb}}(D^m)$.
\end{lemma}

{
\begin{proof}
Lemma \ref{LemmaConstruccionModelPsifisPhi} stated below implies that  ${\mathcal H}\cap {{\rm Emb}}(D^m) \neq \emptyset$,
and so ${\mathcal H} \neq \emptyset$. Repeating the argument of
 Remark \ref{RemarkHisG_delta}  proves Lemma \ref{LemmaModelHEmbNonempty}.
 \end{proof}
 }

  \begin{lemma}
    \label{LemmaConstruccionModelPsifisPhi}

    For all $f \in {{\rm Emb}}(D^m)$ such that $f (D^m) \subset {{\rm int}}(D^m),$  there exists $\psi \in {{\rm Hom}}(D^m)$ and $\Phi \in {\mathcal H} \cap {{\rm Emb}}(D^m)$ such that \em
  $$\psi|_{\partial D^m} = \mbox{id}|_{\partial D^m} \ \ \mbox{ \em and } \ \ \psi \circ f = \Phi.$$
  \end{lemma}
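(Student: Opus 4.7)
The plan is to construct $\psi$ as the uniform limit of a sequence $\{\psi_n\} \subset \mbox{Hom}(D^m)$, each fixing $\partial D^m$ pointwise, together with atom families $\mathcal{A}_n$ so that $\Phi_n := \psi_n \circ f$ satisfies the model conditions through generation $n$. Arranging $\|\psi_n - \psi_{n-1}\|_{\mbox{Hom}}$ to be summable produces convergence to a homeomorphism $\psi$ with $\psi|_{\partial D^m} = \mbox{id}$; by openness of each generation-$k$ condition (Remark \ref{RemarkHisG_delta}), every one of them persists for $\Phi = \psi \circ f$, so $\Phi \in \mathcal{H} \cap \mbox{RHom}(D^m)$.

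For the base case take $\psi_0 = \mbox{id}$ and $\mathcal{A}_0 = \{A\}$, where $A \subset \mbox{int}(D^m)$ is any box containing $f(D^m)$ in its interior; then $\Phi_0(A) = f(A) \subset \mbox{int}(A)$, so $A \rah A$. Inductively, suppose $\psi_{n-1}$ and $\mathcal{A}_0, \ldots, \mathcal{A}_{n-1}$ satisfy the conditions through generation $n-1$. First one chooses $\mathcal{A}_n$: condition (d) at generation $n-1$ gives $\Phi_{n-1}(B) \cap \mbox{int}(C) \neq \emptyset$ exactly when $B \rah C$ for $B, C \in \mathcal{A}_{n-1}$, so for every $(D, B, C) \in \mathcal{A}_{n-1}^{3*}$ one can place two pairwise disjoint small boxes inside $\mbox{int}(B) \cap \Phi_{n-1}^{-1}(\mbox{int}(C))$ to serve as $\Gamma_n(D, B, C)$, arranging the diameters to decrease rapidly enough to satisfy \eqref{eqnLimitDiamAtoms=0}. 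By design, conditions (a), (b), (c) of Definition \ref{definitionAtomsGeneration-n} then hold.

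Next, perturb $\psi_{n-1}$ to obtain $\psi_n$ so that condition (d) is secured. Fix $G \in \Gamma_n(D, B, C)$: the image $\Phi_{n-1}(G)$ is a small topological disk inside $\mbox{int}(C)$, while $\Omega_n(B, C)$ consists of $2^n$ disjoint boxes in $\mbox{int}(C)$ (those children of $C$ labeled with predecessor $B$). Since $m \geq 2$, there is a homeomorphism of $D^m$ supported in a thin tubular neighborhood of an embedded arc in $\mbox{int}(C)$ that stretches $\Phi_{n-1}(G)$ into a thin ``snake'' meeting the interior of each atom of $\Omega_n(B, C)$ while avoiding all other atoms of $\mathcal{A}_n$. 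Choosing these local perturbations with pairwise disjoint supports for different $G \in \mathcal{A}_n$ and composing, one obtains $\psi_n$. The openness of the generation-$<n$ conditions provides a $\delta_n > 0$ within which those conditions are preserved, and one arranges $\|\psi_n - \psi_{n-1}\|_{\mbox{Hom}} < \min(\delta_n, 2^{-n})$ by making the tubular neighborhoods sufficiently thin.

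The main obstacle is the local stretching step itself. Producing a homeomorphism of $D^m$, supported in an arbitrarily small tubular neighborhood of an arc, that pushes a given small topological disk through any prescribed finite set of interior points of $C$ is standard in dimension $m \geq 2$ (one applies the Alexander trick chart-by-chart along the arc, or invokes isotopy extension). The delicate accounting is in the simultaneous choice of atoms in $\mbox{int}(C)$ (organized into clusters according to predecessor label) and arcs, so that each snake visits only its designated cluster and the disjoint-support requirement can be met; the openness of all earlier model conditions then absorbs the resulting perturbation, completing the inductive step.
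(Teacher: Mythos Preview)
Your overall strategy---build $\psi$ as a limit of $\psi_n$'s, each fixing $\partial D^m$, with $\Phi_n=\psi_n\circ f$ carrying atoms through generation $n$---is exactly the paper's. The gap is in the step where you claim $\|\psi_n-\psi_{n-1}\|_{\mathrm{Hom}}<\min(\delta_n,2^{-n})$ ``by making the tubular neighborhoods sufficiently thin.'' That is false. Your homeomorphism $\chi$ is supported in a tube of length comparable to $\mathrm{diam}(C)$ for an $(n-1)$-atom $C$, and it must carry the small disk $\Phi_{n-1}(G)$ to a snake reaching every box of $\Omega_n(B,C)$; hence some point moves a distance of order $\mathrm{diam}(C)$, so $\|\chi-\mathrm{id}\|_{C^0}\gtrsim\mathrm{diam}(C)$ no matter how thin the tube is. Thinness only controls the \emph{transverse} displacement, not the longitudinal one. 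Consequently you cannot force the perturbation below an arbitrary openness radius $\delta_n$, and your mechanism for preserving the generation-$<n$ conditions breaks down. Separately, even granting $\|\psi_n-\psi_{n-1}\|_{C^0}\le\mathrm{diam}(\mathcal A_{n-1})$ (which \emph{is} true since the tube sits inside an $(n-1)$-atom), you give no argument that $\psi_n^{-1}$ is Cauchy: writing $\psi_n^{-1}=\psi_{n-1}^{-1}\circ\chi^{-1}$, the bound on $\|\psi_n^{-1}-\psi_{n-1}^{-1}\|$ involves the modulus of continuity of $\psi_{n-1}^{-1}$, which you do not control.

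The paper avoids both difficulties. Rather than stretching images into snakes, it places $2^{n+1}$ marked points in each new atom and constructs a specific permutation $\theta$ of this finite set (Lemma~\ref{LemmaPermutation}); then it invokes the elementary Lemma~\ref{LemmaHomeosEspecificandoFinitosPuntos2} to realize $\theta$ by a homeomorphism that agrees with $\psi_n$ outside prescribed boxes and maps each such box to itself. Since the modification is supported in the union of the $n$-atoms and maps each of them to itself, the generation-$\le n$ conditions are preserved \emph{exactly} (no openness needed). Convergence of $\psi_n^{-1}$ is then obtained by also requiring the $f$-images of the $n$-atoms to be small (condition~\ref{eqn19-n} and inequality~\eqref{eqn26a}): the domain support of the change lies in $\bigcup_{B\in\mathcal A_n} f(B)$, whose pieces have diameter $<2^{-n}$. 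Your snake construction could in principle be repaired along these lines---argue set-wise preservation of atoms instead of openness, and add a hypothesis on $\mathrm{diam}(f(G))$ to control the inverse---but as written the smallness claim is the missing idea.
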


We now outline the strategy of the proof of Lemma \ref{LemmaConstruccionModelPsifisPhi}. The homeomorphisms  $\psi$ and $\Phi$ are constructed as   limits of respective convergent sequences $\psi_n \in {{\rm Hom}}(D^m)$ and $\Phi_n \in {{\rm Emb}}(D^m)$, such that $\psi_n \circ f= \Phi_n$ for all $n \geq 0$. The embedding $\Phi_n$, by an inductive hypothesis, has atoms of generations $0, 1, \ldots, n$, and $\Phi_{n+1} $ will be constructed so it coincides with $\Phi_n$
outside the interiors of all the atoms of generation $n$ for $\Phi_n$.  Hence,   the collections of atoms of generation   $0,1, \ldots, n$ for $\Phi_n$ is also
a collection of atoms of the same generations for $\Phi_{n+1}$ (see the proof of assertion a) of Lemma \ref{LemmaConstruction(n+1)atoms}).

 To change $\Phi_n$ in the interior of each atom $A$   of generation $n$ for $\Phi_n$, we will change the homeomorphism $\psi_n$ only inside  some adequately defined boxes $f(R)$ such that $R \subset {{\rm int}}(  A) $  is a box (recall that $f$ is an embedding). We will so construct $\psi_{n+1}|_{f(R)}$ such that $ \psi_{n+1} |_{\partial f(R)} = \psi_{n}|_{\partial f(R)}$, and finally extend  $\psi_{n+1}(x) := \psi_n(x)$ for all $x$ in the complement   of the union of  all the boxes $f(R)$.

  Such new homeomorphism $\psi_{n+1}$, if adequately constructed inside the boxes $f(R)$, will   allow us to define the atoms  of generation $n+1$ for $\Phi_{n+1} = \psi_{n+1} \circ f$. These  atoms of generation $n+1$ for $\Phi_{n+1}$  will be many little boxes in the interior of each box  $f^{-1} (f(R) ) =R \subset A$, where $A$ is an atom of generation $n$  both for $\Phi_n$ and for $\Phi_{n+1}$.

Lemma \ref{LemmaConstruccionModelPsifisPhi}  will be proved by induction in several technical lemmas. One inductive hypothesis in the proof is that for a fixed $n \ge 0$ we have constructed an embedding $\Phi_n$ along with associated atoms of generations $0,1, \ldots, n$.
For each $(P,Q) \in {\mathcal A}_n^{2*}$, we
 will   choose  a  connected component  $S(P,Q)$ of $\Phi_n(P) \cap Q $.
 For each $(D,B,C) \in {\mathcal A}_n^{3*}$ we choose     two disjoint boxes $G_0(D,B,C), \ G_1(D,B,C)$ contained in    $\interior(S(D,B) \cap \Phi_n^{-1} S(B,C))$.~By an additional inductive hypothesis on $\Phi_n$ a choice of the connected components $S(D,B)$ and $S(B,C)$ is assumed to exist such that the interior of this intersection is nonempty.

\begin{definition}
\label{remark S(P,Q), G_i}
\em
We provisionally adopt an abusive notation for the  families of such boxes $G_{\cdot}(\cdot, \cdot, \cdot)$.   Even if   such  boxes   are not probably atoms of generation $n+1$ for $\Phi_n$, we  use the same  notation  as if they   were. This is due to   the purpose, realized later in the proof of Lemma \ref{LemmaConstruccionModelPsifisPhi}, of modifying $\Phi_n$  to construct  a new embedding $\Phi_{n+1}$  for which the same atoms up to  generation $n$ for $\Phi_n$ are also atoms up to generation $n$ for $\Phi_{n+1}$, and besides the boxes $G_{\cdot}(\cdot, \cdot, \cdot)$ are the atoms of generation $n+1$   for $\Phi_{n+1}$. In brief, we first choose the boxes, candidates to be the atoms of generation $n+1$ for a new embedding $\Phi_{n+1}$, and later we construct $\Phi_{n+1}$. Let
$${\mathcal A}_{n+1} := \{G_j(D,B,C): j \in \{0,1\}, (D,B,C) \in {\mathcal A}_n^{3*} ( \Phi_n)\};$$
$$\Omega_{n+1}(B) := \{G_j(D,B,C): j \in \{0,1\}, (D,B,C) \in {\mathcal A}_n^{3*} (\Phi_n) \}    $$   for each fixed $B \in {\mathcal A}_n(\Phi_n)$;
 $$\Omega_{n+1}(D, B) := \{G_j(D,B,C): j \in \{0,1\}, B \stackrel{\Phi_n}{\rightarrow} C  \}$$   for each fixed $(D,B) \in {\mathcal A}_n^{2^*} (\Phi_n)$;
\begin{equation}
\label{eqn300}
\Gamma_{n+1}(D, B,C) := \{G_j(D,B,C): j \in \{0,1\} \} \end{equation} for each fixed $(D,B,C) \in {\mathcal A}_n^{3^*}(\Phi_n)$.
We will use this abusive notation in Lemmas \ref{lemma2.10-f} and \ref{LemmaConstruction(n+1)atoms} as well as in Remark \ref{remarkConditionsabc}.
\end{definition}

\begin{lemma}
\label{lemma2.10-f}
For all $(B,C) \in {\mathcal A}_n^{2*}(\Phi_{n})$ and for all $E \in {\mathcal A}_{n+1}$,
  $E \subset \Phi_n^{-1}(S(B,C))$  if and only if $E \in \Gamma_{n+1}(D, B, C)$ for some $D \in {\mathcal A}_n$ such that $D\stackrel{\Phi_{n}}{\rightarrow} B$.
\end{lemma}

\begin{proof}
  By the construction in Remark \ref{remark S(P,Q), G_i}, for all $E \in {\mathcal A}_{n+1}$ we have    $E \in \Gamma_{n+1}(D,B,C)= \{\Gamma_0(D,B,C), \Gamma_1(D,B,C)\}$ for some $(D,B,C) \in {\mathcal A}_n^{3^*}$. This means that $E=G_j(D,B,C) \subset {{\rm int}} (S(D,B) \cap \Phi_{n}^{-1}(S(B,C)))$ for some $j = 0,1$. Therefore, $E \subset \Phi_n^{-1} (S(B,C))$ if and only if there exists $D \in {\mathcal A}_n$ such that $D \stackrel{\Phi_n}\rightarrow B$  and $E \in \Gamma_{n+1}(D,B,C)$.
\end{proof}

\begin{lemma}
\label{LemmaConstruction(n+1)atoms}

Suppose that
$$\interior(S(D,B) \cap \Phi_n^{-1} S(B,C)) \neq \emptyset \ \ \forall \ (D,B,C) \in {\mathcal A}_n^{3*} (\Phi_n).$$
Let $\Phi_{n+1} \in {{\rm Emb}}(D^m) $ be such that $\Phi_{n+1} (x) = \Phi_n(x)$ for all $x \not \in \cup_{\{(B,C) \in {\mathcal A}_n^{2*}(\Phi_n)\}}\ \interior( \Phi_n^{-1} S(B,C)) $.
Then,

\vspace{.15cm}

\noindent a)
For all $0 \leq j \leq n$ and for any two atoms $B, C \in {\mathcal A}_j^2(\Phi_n)$, we have $\Phi_n(B) = \Phi_{n+1}(B)$; hence $B \stackrel{\Phi_n}{\rightarrow} C$ if and only if $B \stackrel{\Phi_{n+1}}{\rightarrow} C$.

\vspace{.15cm}

\noindent b)  $\#{\mathcal A}_{n+1} = 2^{(n+1)^2}$ and $E \cap F = \emptyset$ for all $E, F \in {\mathcal A}_{n+1}$ such that $E \neq F.$

\vspace{.15cm}

\noindent c) The family ${\mathcal A}_{n+1}$ is partitioned into the   pairwise disjoint subfamilies $\Omega_{n+1}(B)$ where $B \in {\mathcal A}_n$. Besides     $\# \Omega_{n+1}(B) = 2^{2n+1}  $   and $  \Omega_{n+1}(B)= \{G \in {\mathcal A}_{n+1} \colon G \subset \rm{int}(B)\} $ for all $B \in {\mathcal A}_n$.

\vspace{.15cm}

\noindent d) For all $B \in {\mathcal A}_n$ the family of boxes $\Omega_{n+1}(B)$ is partitioned into  the  pairwise disjoint subfamilies $\Omega_{n+1} (D,B)$ where $D \in {\mathcal A}_n$ is such that $D \stackrel{\Phi_{n+1}}{\rightarrow} B$. Besides, for all $(D,B) \in {\mathcal A}_n^{2*}(\Phi_{n+1})$, we have
$\#\Omega_{n+1}(D, B) = 2^{n+1}$ and $\Omega_{n+1}(D, B) =\{G \in \Omega_{n+1}(B) \colon D \stackrel{\Phi_{n+1}}{\rightarrow} G\} $.

\vspace{.15cm}

\noindent e) For all $(D, B) \in {\mathcal A}_n^{2*}(\Phi_{n+1})$, the family of boxes $\Omega_{n+1}(D,B)$ is partitioned into the   pairwise disjoint subfamilies
$\Gamma_{n+1} (D,B, C)$, where $C \in {\mathcal A}_n$ is such that $B\stackrel{\Phi_ {n+1}}{\rightarrow} C$.

Besides, for all $(D,B,C) \in {\mathcal A}_n^{3*}(\Phi_{n+1})$, we have
 $\#\Gamma_{n+1}(D,B,C)= 2$  and
 $\Gamma_{n+1}(D,B,C) = \{G \in \Omega_{n+1}(D,B) \colon G \stackrel{\Phi_{n+1}}{\rightarrow} C\}.$

\vspace{.15cm}

\noindent f) For all   $(D, B,C) \in {\mathcal A}_{n}^{3*}(\Phi_{n+1})$,   for all $G \in \Gamma_{n+1}(D, B, C)$, and for all $E \in {\mathcal A}_{n+1}$,
 $$\Phi_{n+1}(G) \cap E  \neq \emptyset \ \mbox{ only if } \ E  \in   \Omega_{n+1}(B,C).$$
\end{lemma}

\begin{proof}
a) Let us prove assertion a) under the more general hypothesis $\Phi_{n+1} (x) = \Phi_n(x)$ for all $x \not \in \cup_{B  \in {\mathcal A}_n}\ \interior(B) $. (Note the $x \not \in \interior(S(D,B) \cap \Phi_n^{-1} S(B,C)) $ implies $x \not \in B$.)

By hypothesis $\Phi_n, \Phi_{n+1} \in \mbox{End} (D^m)$ and $\Phi_n|_{\partial A} = \Phi_{n+1}|_{\partial A }$ for the boxes $A \in {\mathcal A_j}$ for all $0 \leq j \leq n$ (recall that, from condition a) of definition \ref{definitionAtomsGeneration-n}, each atom  of generation $n$ for $\Phi_n$ is contained in the interior of an atom  of generation $0 \leq j \leq n$). Then $\Phi_{n+1}(A) = \Phi_{n}(A)$ for all $ B \in \cup_{0 \leq j \leq n}{\mathcal A_j}$. 
Part  a) follows immediately.

 b) By construction, $E= G_j(D,C,B), \ F= G_{j'}(D', B', C')$. If  $E \neq F$ then,
either $( D, C, B)= ( D', C', B')$ and $j \neq j'$, or $( D, C, B)\neq ( D', C', B')$. In the first case, by construction $$G_0(D, C, B) \cap G_1(D, C, B) = \emptyset,$$ in other words $E \cap F = \emptyset$. In the second case, either $D \neq D'$ or $B \neq B'$ or $C \neq C'$. By construction $G_j(D, B, C) \subset \Phi_n(D)\cap B \cap \Phi_n^{-1}(C)$ and $G_{j'}(D', B', C') \subset  \Phi_n(D')\cap B' \cap \Phi_n^{-1}(C')$. Since members of ${\mathcal A}_n $ are pairwise disjoint, and $\Phi_n \in {{\rm Emb}}(D^m)$, we deduce that $G_{j }(D , B , C ) \cap G_{j'}(D', B', C') = \emptyset$, hence $E \cap F = \emptyset$  as required.

By the construction in Remark \ref{remark S(P,Q), G_i}:
$${\mathcal A}_{n+1}= \bigcup_{(D, C, B)\in {\mathcal A}_n^{3*}}  \Gamma_{n+1}(D,B,C),$$
where the families in the union are pairwise disjoint and each one has   2 different boxes of ${\mathcal A}_{n+1}$. Therefore, taking into account the last assertion of  Remark \ref{remarkAtomsGeneration-n}, we deduce that $$\#{\mathcal A}_{n+1} = 2 \cdot(\#{\mathcal A}_n^{3*}) = 2 \cdot 2^{n^2 + 2n}= 2^{(n+1)^2}.$$

 c)  Using the notation at the end of Remark \ref{remark S(P,Q), G_i} of   ${\mathcal A}_{n+1}$ and  $\Omega_{n+1}(B)$, we have
$${\mathcal A}_{n+1} = \bigcup_{B \in {\mathcal A}_n} \Omega_{n+1}(B).$$
Besides, for all $G \in {\mathcal A}_{n+1}$, $G \subset \mbox{int} (B)$ if and only if $G \in \Omega_{n+1} (B)$, because by construction,  $G \subset \interior (S(D,B)) \subset \interior(B)$ for some $B \in {\mathcal A}_n$. Since members of ${\mathcal A}_n$ are pairwise disjoint, we deduce that $\Omega_{n+1}(B) \cap \Omega_{n+1}(B') = \emptyset$ if $B \neq B'$. We conclude that the above union of different subfamilies $\Omega_{n+1}(B)$ is a partition of ${\mathcal A}_{n+1}$, as required.

 Note that
 $$\Omega_{n+1}(B) = \bigcup_{D \in {\mathcal A}_n, D \stackrel{\Phi_n}{\rightarrow}B} \ \ \ \bigcup_{C \in {\mathcal A}_n, B \stackrel{\Phi_n}{\rightarrow}C} \Gamma_{n+1} (D,B,C),$$ where the families in the union are pairwise disjoint and each of them has two different boxes. Therefore, taking into account that ${\mathcal A}_n$ is a family of atoms for $\Phi_n$ (by hypothesis), equality ii) of Definition \ref{definitionAtomsGeneration-n} implies:
 $$\#\Omega_{n+1}(B) = 2 \cdot (\#\{ D \in {\mathcal A}_n, D \stackrel{\Phi_n}{\rightarrow}B\}) \cdot \#\{C \in {\mathcal A}_n, B \stackrel{\Phi_n}{\rightarrow}C\} =
 $$ $$2 \cdot 2^{n} \cdot 2^{n} = 2^{2n+1}.  $$

 d)
 By the construction at the end of Remark \ref{remark S(P,Q), G_i}, $$\Omega_{n+1}(B) = \bigcup_{D \in {\mathcal A}_n, D \stackrel{\Phi_n}{\rightarrow} B}\ \ \Omega_{n+1}(D, B).$$ Besides, $\Omega_{n+1}(D,B) \cap \Omega_{n+1}(D', B) = \emptyset$ if $D \neq D'$ in ${\mathcal A}_n$, since different atoms of generation $n$ are pairwise disjoint, and $G \in \Omega_{n+1}(D, B) $ implies $G \subset \Phi_n(D)$ which is disjoint with $\Phi_n(D')$ since $\Phi_n$ is an embedding.

 By the construction in Remark \ref{remark S(P,Q), G_i}, $$\Gamma_{n+1}(D,C,B) = \{G_0(D,C,B), \ G_1(D,C,B)\},$$ where the two boxes $G_{\cdot}(D,C,B)$ inside the family $\Gamma_{n+1}(D,C,B)$ are disjoint, hence different. Thus the cardinality of $\Gamma_{n+1}(D,C,B)$ is 2.

 Also,
 $\Omega_{n+1} (D,B) = \bigcup_{C \in {\mathcal A}_n, B \stackrel{\Phi_n}{\rightarrow} C}\ \ \Gamma_{n+1}(D, B, C)$. Besides,   $$\Gamma_{n+1}(D,B,C) \cap\Gamma_{n+1}(D,B,C')  = \emptyset $$ if $C \neq C'$ in ${\mathcal A}_n$, because  two different atoms of generation $n$ are pairwise disjoint and $G \in \Gamma_{n+1}(D,B,C)$ implies $ G \subset \Phi_{n}^{-1}(C)$.

 From the above assertions and from equality (ii) of the definition of atoms of generation $n$, we deduce that
 $$\#\Omega_{n+1} (D,B) = 2 \cdot (\#    \{C \in {\mathcal A}_n\colon B \stackrel{\Phi_n}{\rightarrow} C\}) = 2 \cdot 2^{n} = 2^{n+1}.$$

Finally, for all $G \in \Omega_{n+1}(B)$  there exists (unique) $D \in {\mathcal A}_n$ such that $G \subset S(D, B) \subset \Phi_n(D) = \Phi_{n+1}(D)$. Hence   $D  \stackrel {\Phi_{n+1}} \rightarrow G$ if and only if $G \in \Omega(D,B)$.

e) Above we proved that $\Omega_{n+1}(D,B)$ is partitioned into the   pairwise disjoint subfamilies
$\Gamma_{n+1} (D,B, C)$, where $C $ is such that $(B, C) \in {\mathcal A}_n^{*2}$.

We have also noticed that $\#\Gamma_{n+1}(D,B,C)= 2$. Finally, by the construction of Remark \ref{remark S(P,Q), G_i}, for all $G \in \Omega_{n+1}(D,B)$ there exists $C \in {\mathcal A}_{n}$ such that $G \in  S(D,B) \cap \Phi^{-1}_n(S(B,C) $.
Therefore $\Phi_{n+1}(G) \subset \Phi_{n+1}(\Phi^{-1}_n(S(B,C)) $.
This latter set   coincides with $\Phi_{n}(\Phi^{-1}_n(S(B,C))$ because, by hypothesis,  $\Phi_n$ and $\Phi_{n+1}$ are embeddings and coincide outside the interiors of all the sets $ \Phi^{-1}_n(S(B,C)$. We deduce that $$\Phi_{n+1}(G) \subset   \Phi^{-1}_n(S(B,C)) \subset \Phi_n (\Phi_{n}^{-1} (S(B,C)))$$ $$ \subset S(B,C) \subset \Phi_n(B) \cap C \subset C .$$
Thus, the interior of $\Phi_{n+1}(G)$, which is nonempty because  $G$ is a box and $\Phi_{n+1} $ is an embedding, is contained in the interior of $C \in {\mathcal A}_n$. Since members of ${\mathcal A}_n$ are pairwise disjoint,  we conclude that, for all $G \in \Omega(D,B)$, $G \in \Gamma_{n+1}(D,B,C)$ if and only if $G \stackrel {\Phi_{n+1}}  \rightarrow C$, as required.

 f) If $G \in \Gamma_{n+1}(D, B, C)$ then $G \subset   (S(D,B)) \subset  \Phi_{n}(D) \cap B$. Therefore $$ \Phi_{n+1}(G) \subset \Phi_{n+1}(B)= \Phi_n(B).$$ Besides, we have proved above that $$\Phi_{n+1}(G) \subset C.$$
 Assume that $\Phi_{n+1}(G) \cap E  \neq \emptyset $ for some $E \in {\mathcal A}_{n+1}$. Since   $E \in \Omega_{n+1}(B', C')$ for some $(B', C') \in {\mathcal A}_n^{2*}$, we have $$E \subset S(B', C') \subset \Phi_n(B') \cap C' .$$
 Since $\Phi_{n+1}(G) \cap E \neq \emptyset$, we deduce that $\Phi_n(B)\cap \Phi_n(B') \cap C \cap C' \neq \emptyset$. 
 Since But distinct atoms of generation $n$ are disjoint and $\Phi_n$ is one to one, we conclude that $B= B', C= C'$ and
   $ E  \in   \Omega_{n+1}(B,C)$.
\end{proof}

\begin{remark} \label{remarkConditionsabc}
 {
 \em Lemma \ref{LemmaConstruction(n+1)atoms}a) immediately implies that for $0 \le j \le n$:\\
 - the families ${\mathcal A}_j^{2*}$ and ${\mathcal A}_j^{3*}$ for $\Phi_n$ and for $\Phi_{n+1}$, coincide, and \\
 - the members of the same families ${\mathcal A}_j$ are also atoms of respective generations $0,1, \ldots, n$ for $\Phi_{n+1}$.
 }

\em
 Parts b) to e) of Lemma \ref{LemmaConstruction(n+1)atoms} ensure that the family ${\mathcal A}_{n+1}$ of boxes   constructed in Remark \ref{remark S(P,Q), G_i}, satisfy conditions i), a), b), c) and d) of Definition \ref{definitionAtomsGeneration-n} for $\Phi_{n+1}$. Thus, the members of ${\mathcal A}_{n+1}$ are good candidates to be atoms of generation $n+1$ for $\Phi_{n+1}$.

  To  actually obtain   atoms of generation $n+1$ for $\Phi_{n+1}$ we will further modify the map  in the interior of the sets $S(D,B) \cap \Phi_n^{-1} S(B,C)) $ for all $(D,B,C) \in {\mathcal A}_n^{3*}(\Phi_n)$, in such a way that  for   the new embedding $\Phi_{n+1}$   the  boxes of ${\mathcal A}_{n+1}$ also satisfy condition ii) of Definition \ref{definitionAtomsGeneration-n}.
\end{remark}

\begin{lemma}
\label{LemmaPermutation}
Still keeping the notation of Remark \ref{remark S(P,Q), G_i}, let
$\widetilde L_{n+1} \subset D^m$ be a finite set with cardinality  $2^{(n+1)^2} 2^{n+1}$, with a unique point $\widetilde e_i (E)\in \widetilde L_{n+1} $  for each $(i,E) \in \{1,2, \ldots 2^{n+1}\}\times {\mathcal A}_{n+1}$.
Assume that $$\widetilde e_i(E) \in \interior(E)   \ \ \forall \   (i,E) \in \{1,2, \ldots 2^{n+1}\} \times {\mathcal A}_{n+1}.$$
Then, there exists a permutation $\theta: \widetilde L_{n+1} \rightarrow \widetilde L_{n+1}$ such that

\begin{enumerate}[a)]

\item For all $  (i,E) \in \{1,2, \ldots 2^{n+1}\} \times \Gamma_{n+1}(D, B, C) $ for some

$(D, B, C) \in {\mathcal A}_n^{3*}(\Phi_n)$,
$$\theta(\widetilde e_i(E)) = \widetilde e_{i'}(E') $$ for a  unique  $ i' \in \{1, 2, \ldots, 2^{n+1}\} $ and a  unique  $  E' \in \Omega_{n+1}(B, C).$

\item For all $(D,B,C) \in {\mathcal A}_n^{3*}(\Phi_{n})r
$, for all $   E \in \Gamma_{n+1}(D, B, C) $ and for all $F \in \Omega_{n+1}(B, C)$
there exists  unique $$(i, i') \in \{1,2, \ldots 2^{n+1}\}^2$$ such that  $\theta(\widetilde e_i(E)) = \widetilde e_{i'}(F) $.

\item For all $(B,C) \in {\mathcal A}_n^{2*}(\Phi_n)$ $$\theta \Big(\Big \{\widetilde e_i(E)\colon  E\in  \hspace{-0.3cm} \bigcup_{D \in {\mathcal A}_n, \ (D,B) \in {\mathcal A}_n^{2^*}} \ \  \Gamma_{n+1}(D,B,C), \ \ i\in  \{1,2, \ldots 2^{n+1}\}\Big\} \Big) = $$ $$\Big \{\widetilde e_{i'}(F): F \in \Omega_{n+1}(B,C), i'\in \{1,2,, \ldots 2^{n+1}\} \Big \} = \widetilde L_{n+1} \cap S(B,C).$$
\end{enumerate}
\end{lemma}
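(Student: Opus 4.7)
The plan is to exploit the block structure implicit in conditions (a)--(c): a pair $(B,C) \in \mathcal{A}_n^{2*}$ plays the role of a ``sector'' of $\widetilde{L}_{n+1}$, and the permutation $\theta$ can be built independently on each sector. For each $(B,C) \in \mathcal{A}_n^{2*}$ introduce the subsets of $\widetilde{L}_{n+1}$
$$U(B,C) := \left\{\widetilde{e}_i(E): E \in \bigcup_{D:(D,B) \in \mathcal{A}_n^{2*}} \Gamma_{n+1}(D,B,C),\ 1 \le i \le 2^{n+1}\right\},$$
$$V(B,C) := \left\{\widetilde{e}_{i'}(F): F \in \Omega_{n+1}(B,C),\ 1 \le i' \le 2^{n+1}\right\}.$$
Condition (c) is precisely the assertion $\theta(U(B,C)) = V(B,C) = \widetilde{L}_{n+1} \cap S(B,C)$.

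The first step would be to show that the two families $\{U(B,C)\}_{(B,C) \in \mathcal{A}_n^{2*}}$ and $\{V(B,C)\}_{(B,C) \in \mathcal{A}_n^{2*}}$ are both partitions of $\widetilde{L}_{n+1}$, and that $|U(B,C)| = |V(B,C)| = 2^{n+1}\cdot 2^{n+1}$. Both partition statements follow from the fact, built into Lemma \ref{LemmaConstruction(n+1)atoms}, that each $(n+1)$-atom is uniquely parameterized as $G_j(D,B,C)$ by a triple $(D,B,C) \in \mathcal{A}_n^{3*}$ and an index $j \in \{0,1\}$; the $U$-partition groups atoms by their (second, third) labels, while the $V$-partition groups them by their (first, second) labels. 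The cardinality counts use parts (e) and (g) of that lemma. Within this step I would also establish the identity $V(B,C) = \widetilde{L}_{n+1} \cap S(B,C)$: an $(n+1)$-atom $F$ meets $S(B,C)$ if and only if it is contained in $S(B,C)$ (atoms live inside the chosen connected components), if and only if $F = G_{j'}(B,C,C')$ for some $(B,C,C') \in \mathcal{A}_n^{3*}$, equivalently $F \in \Omega_{n+1}(B,C)$. This uses the pairwise disjointness of $n$-atoms and the injectivity of $\Phi_n$, which together force distinct components $S(\cdot,\cdot)$ to sit in disjoint regions.

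The second step is a blockwise construction. For each fixed $(B,C) \in \mathcal{A}_n^{2*}$, enumerate the $N := 2^{n+1}$ source atoms as $E_1,\ldots,E_N$ and the $N$ target atoms in $\Omega_{n+1}(B,C)$ as $F_1,\ldots,F_N$. On $U(B,C)$ define
$$\theta(\widetilde{e}_i(E_k)) := \widetilde{e}_{k}(F_i), \qquad 1 \le i,k \le N,$$
which is manifestly a bijection from $U(B,C)$ onto $V(B,C)$. Assembling these block bijections along the two partitions yields a permutation $\theta$ of $\widetilde{L}_{n+1}$. Verifying (a)--(c) is then bookkeeping: for (a), the image $\widetilde{e}_k(F_i)$ selects a unique target atom $F_i \in \Omega_{n+1}(B,C)$ and a unique index $k$; for (b), fixing the pair $(E_k,F_j)$, the equation $\theta(\widetilde{e}_i(E_k)) = \widetilde{e}_{i'}(F_j)$ forces $(i,i') = (j,k)$; and (c) is the surjectivity of the block bijection combined with the identity $V(B,C) = \widetilde{L}_{n+1} \cap S(B,C)$ from the first step.

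The main obstacle is the first step, specifically the identity $V(B,C) = \widetilde{L}_{n+1} \cap S(B,C)$. This requires chasing each $(n+1)$-atom through the nested families of connected components $S(\cdot,\cdot)$ and invoking both the disjointness of $n$-atoms and the injectivity of $\Phi_n$. Once this is settled, the definition of $\theta$ and the verification of (a)--(c) are purely combinatorial; the substantive content of the lemma is really the partitioning/labeling structure of $\mathcal{A}_{n+1}$ set up in Lemma \ref{LemmaConstruction(n+1)atoms}.
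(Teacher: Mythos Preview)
Your proposal is correct and in fact cleaner than the paper's argument. Both proofs identify the same structural fact---that $\widetilde L_{n+1}$ decomposes into ``sectors'' $U(B,C)$ and $V(B,C)$ of equal size $2^{2(n+1)}$ indexed by $(B,C)\in\mathcal A_n^{2*}$---and both prove the identity $V(B,C)=\widetilde L_{n+1}\cap S(B,C)$ by the same disjointness/injectivity argument you describe. The difference is in how $\theta$ is actually built: the paper re-indexes every point as $\widehat e_{i,j}(k,B,l)$ with $(i,j,k,l)\in\{1,\dots,2^{n+1}\}\times\{0,1\}\times\{1,\dots,2^n\}^2$ and then writes down an explicit global formula for $\theta$ using modular arithmetic on these indices, afterwards checking surjectivity by exhibiting an inverse and verifying (a)--(c) by hand. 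Your approach instead enumerates the $N=2^{n+1}$ source atoms and $N$ target atoms within each sector and defines the transparent ``transpose'' rule $\theta(\widetilde e_i(E_k))=\widetilde e_k(F_i)$, which makes bijectivity and properties (a)--(b) immediate. The paper's explicit formula buys nothing extra here (it is never used again), so your block construction is a genuine simplification; just be sure when you write it up to note explicitly that the enumerations $E_1,\dots,E_N$ and $F_1,\dots,F_N$ depend on $(B,C)$, so that the gluing along the two partitions is unambiguous.
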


\begin{proof}

From the construction  of the family ${\mathcal A}_{n+1}$ (see Remark \ref{remark S(P,Q), G_i}), we deduce that  for all $E \in {\mathcal A}_{n+1}$ there exists unique $j \in \{0,1\}$ and unique $(D,B,C) \in {\mathcal A}_{n}^{3*}$ such that $$E= G_j(D,B,C)\in \Gamma_{n+1}(D,B,C) $$ (recall Equality (\ref{eqn300})) and thus
we will write
$$\widetilde e_i(G_j(D,B,C))  = \widetilde e_i(E)$$
 for all $(i,E) \in \{1, 2, \ldots, 2^{n+1}\} \times {\mathcal A}_{n+1}$.

By hypothesis ${\mathcal A}_n$ is the family of  atoms of generation $n$ for $\Phi_n$, thus we can apply the equalities ii) of Definition \ref{definitionAtomsGeneration-n}. So, for each $B \in {\mathcal A}_n$, we can index the different atoms $D \in {\mathcal A}_n$ such that $D \stackrel{\Phi_n}{\rightarrow} B$ as follows:
\begin{equation}
\label{eqn105D-}
\{D \in {\mathcal A}_n \colon D \stackrel{\Phi_n}{\rightarrow} B\} = \{D_1^-(B), D_2^-(B), \ldots D_{2^n}^-(B)\},
\end{equation}
where $D^-_{k_1}(B) \neq D^-_{k_2}(B)$ if $k_1 \neq k_2$ (they are disjoint atoms of generation $n$).

Analogously
\begin{equation}
\label{eqn105C+}
\{C \in {\mathcal A}_n \colon B\stackrel{\Phi_n}{\rightarrow} C\} = \{C_1^+(B), C_2^+(B), \ldots C_{2^n}^+(B)\},
\end{equation}
where $C^+_{l_1}(B) \neq C^+_{l_2}(B)$ if $l_1 \neq l_2$.

Now, we index the distinct points of $\widetilde L_{n+1}$ as follows:
$$\widehat e_{i, j}(k, B, l):= \widetilde e_i(G_j(D, B, C))=\widetilde e_i(G_j(D_k^-(B), B, C_l^+(B))),$$
 $$\mbox{for all } (i,j,B, k, l) \in \{1,2,\ldots, 2^{n+1}\} \times \{0,1\} \times {\mathcal A}_n \times \{1,2, \ldots, 2^n\}^2. $$
Define the following correspondence $\theta: \widetilde L_{n+1} \rightarrow \widetilde L_{n+1}$:
$$\theta (\widehat e_{i,j}(k, B, l)) =\widehat e_{i',j'}(k', B', l'), \mbox{ where} $$
\noindent{$\bullet$}  $ B' := C_l^+(B),$

\noindent{$\bullet$} $k'$ is such that $B= D^-_{k'}(C)$ (such $k'$ exists and is unique because $B \stackrel{\Phi_n}{\rightarrow} C$, using  (\ref{eqn105D-})),

\noindent{$\bullet$} $l' = i  \pmod{2^n},$

\noindent{$\bullet$} $j' = 0$ if $i \leq 2^n$ and $j'= 1$ if $i > 2^n,$

\noindent{$\bullet$} $i' = k + j \cdot 2^n.$

Let us prove that $\theta$ is surjective; hence it is a permutation of the finite set $\widetilde L_{n+1}$.

Let $\widehat e_{i',j'}(k', B', l') \in \widetilde L_{n+1}$ be given, where
 $$(i',j',B', k', l') \in \{1,2,\ldots, 2^{n+1}\} \times \{0,1\} \times {\mathcal A}_n \times \{1,2, \ldots, 2^n\}^2.$$

Construct

\noindent{$\bullet$} $i: = l' + j' \cdot 2^n$. Then
  $l'= i \pmod{2^n}$, $j' = 0$ if $i \leq 2^n$ and $j'=1$ if $i > 2^n$.

  \noindent{$\bullet$} $B := D^{-}_{k'}(B')$. Then $B \stackrel{\Phi_n}{\rightarrow}B'$. So, there exists $l$ such that $B' = C^+_{l}(B)$.

  \noindent {$\bullet$}  $k:= i' \pmod{2^n}$, $j:= 0$ if $i' \leq 2^n$ and $j:=1$ if $i' > 2^n$. Therefore $i' = k + 2^n j$.

  By  the above equalities we have constructed some $\theta^{-1}$ such that $\theta \circ \theta^{-1} $ is the identity map. So, $\theta$ is surjective, hence also one-to-one in the finite set $\widetilde L_{n+1}$, as required.

 Now, let us prove that $\theta$ satisfies assertions a), b), c) of Lemma \ref{LemmaPermutation}.

 a)  Fix $\widetilde e_i(E) \in {\widetilde L_{n+1}}$. By construction $\theta(\widetilde e_i(E)) = \widetilde e_{i'}(E') \subset {{\rm int}}(E')$ for some $(i,E) \in \{1,2, \ldots, 2^{n+1}\} \times {\mathcal A}_{n+1}$. Since members of ${\mathcal A}_{n+1}$ are pairwise disjoint (recall Lemma \ref{LemmaConstruction(n+1)atoms}-b)), the box $E'$ is unique. Besides,  by hypothesis, $\widetilde e_{i'}(E') \neq \widetilde e_{j'}(E') $ if $i' \neq j'$. So, the index $i'$ is also unique. Therefore, to finish the proof of a), it is enough to check that $E ' \in \Omega_{n+1}(B,C)$ if $E \in \Gamma_{n+1}(D,B,C)$.

  By the definition of the family $\Gamma_{n+1}(D,B,C) $ in Remark \ref{remark S(P,Q), G_i}, if $E \in \Gamma_{n+1}(D,B,C)$, there exists $j \in \{0,1\}$ such that $E = G_j(D,B,C)$. Thus, using the notation at the beginning
  $\widetilde e_i(E) = \widetilde e_i (G_j(D,B,C)) = \widehat e_{i,j}(k, B, l)$, where $D= D^-_{k}(B)$ and $C= C^+_{l}(B)$. Then, using the definition of the permutation $\theta$, and the computation of its inverse $\theta^{-1}$, we obtain $\widetilde e_{i'}(E) = \theta (\widetilde e_i(E)) = \widehat e_{i', j'}(k', B', l')$, where
  $$ B' = C_l^+ (B) = C, \ \ \ D' =D^-_{k'}(B') = B. $$

We have proved that $\widetilde e_{i'}(E') = \widetilde e_{i'}(G_{j'} (B,C, C'))$.
Finally, from the definition of the family $\Omega_{n+1}(B,C)$ in Remark \ref{remark S(P,Q), G_i} we conclude that $E' \in \Omega_{n+1}(B,C) $ as asserted in part a).

b) Fix $(D, B, C) \in {\mathcal A}_{n}^{3*}$ and $E \subset \Gamma_{n+1}(D,B,C)$. Then, using the definition of the family
$\Gamma_{n+1}(D,B,C)$ in Remark \ref{remark S(P,Q), G_i}, we have unique
$(j, k, l) \in \{0,1\} \times \{1,2, \ldots, 2^n\}^2$ such that $E= G_j(D,B,C)$,  $D= D_k^-(B)$,  $C= C^+_l(B)$.
Consider the finite set $Z$ of $2^{n+1}$ distinct points $\widetilde e_i(E) = \widehat e_{i,j}(k, B, l)$,
with $j,k,B,l $ fixed as above and $i\in \{1,2, \ldots, 2^{n+1}\}$.
Let $i' =: k + 2^n j$, then
the image of each point in $Z$ by the permutation $\theta$    is $\theta(\widetilde e_i(E)) = \widetilde e _{i'}(G_{j'}(B, C, C') $
(here we use assertion a). Since $k,j$ are fixed, we deduce that there exists
a unique $i'$ such that    all the points of $\theta(Z) $ are of the form $\widetilde e_{i'}(F)$, $F= G_{j'}(B,C,C')$ with $j'\in \{0, 1\}$,
$C' = C^+_{k'}(C), \ k' \in \{1,2, \ldots, 2^{n+1}\}$. We have proved that the permutation $\theta|_Z$ is  equivalent to
$$  i \in \{1,2, \ldots, 2^{n+1}\} \rightarrow (j', k') \in \{0,1\} \times \{1,2, \ldots, 2^n\}$$ such that
$\theta (\widetilde e_i(E)) = \widetilde e_{i'}(G_{j'}(B, C, C^+_{k'}(C) ))$ with $i'$ fixed.

Since $\#\{1,2, \ldots, 2^{n+1} \}= $ $\#( \{0,1\} \times \{1,2, \ldots, 2^n\})$, from the injectiveness of $\theta$  we deduce that $\theta(Z) = \{0,1\} \times \{1,2, \ldots, 2^n\}$.  In other words, for every $F \in \Omega(B,C)$ there exists unique
$i$ such that $\theta(\widetilde e_i(E)) = \widetilde e_{i'}(F)$ (where $i'$ is uniquely defined given $E$). This ends the proof of assertion b).

c) For fixed $(B,C) \in {\mathcal A}_n^{2*}$, denote $$P:= \Big\{\widetilde e_i(E) \colon  \ \ E \in \bigcup_{ D \in {\mathcal A}_n, D \stackrel{\Phi_n}{\rightarrow} B} \ \    \Gamma_{n+1}(D,B,C), \ \ i\in\{1, 2, \ldots, 2^{n+1}\}\Big \},$$
$$Q:=  \{\widetilde e_{i'}(F) \colon  \ \ F\in   \Omega_{n+1}( B,C), \ \ i'\in\{1, 2, \ldots, 2^{n+1}\} \} \subset \widetilde L_{n+1}. $$
Applying assertion a) we deduce that $\theta(P) \subset Q$. So, to prove that $\theta(P)=Q$ it is enough to prove that $\#P = \#Q$. In fact, applying   Lemma \ref{LemmaConstruction(n+1)atoms} for the family of boxes ${\mathcal A}_{n+1}$ for the family of atoms ${\mathcal A}_n$, we obtain

\begin{align*}
\#P &= 2^{n+1} \cdot (\#\Gamma_{n+1}(D,B,C)) \cdot (\#\{D \in {\mathcal A_n} \colon D \stackrel{\Phi_n}{\rightarrow}B \}) = 2^{n+1} \cdot 2 \cdot 2^n \\
\#Q & = 2^{n+1} \cdot (\#\Omega_{n+1} )(B,C))= 2^{n+1}\cdot  2^{n+1},
\end{align*}
which proves that $\#P = \#Q$ and thus that  $\theta(P)=Q$.

Finally, let us prove that $Q=\widetilde L_{n+1} \cap S(B,C).$
On the one hand, if $F \in \Omega_{n+1}(B,C)$, then $F = G_j(B,C,C')$ for some $(j, C')$. Applying the construction of the boxes of ${\mathcal A}_{n+1}$ in Remark \ref{remark S(P,Q), G_i}, we obtain     $  F\subset S(B,C) $, hence  $\widetilde e_{i'} (F) \in \widetilde L_{n+1} \cap {{\rm int}}(F) \subset  \widetilde L_{n+1} \cap S(B,C) $. This proves that $Q \subset \widetilde L_{n+1} \cap S(B,C)$.

On the other hand, if $\widetilde e_{i'}(F) \in \widetilde L_{n+1} \cup S(B,C) $, then  $F \in {\mathcal A}_{n+1}$. We obtain $F = G_j(D', B', C') \subset S(D', B')$ for some $(D', B', C') \in {\mathcal A}_n^{3*}$. Since $S (D',B') \subset \Phi_n(D') \cap B'$ and $S(B,C) \subset \Phi_n(B) \cap C$, we obtain $S(D',B') \cap S(B,C) = \emptyset$ if $(D', B') \neq (B,C)$. But $\widetilde e_{i'} (F) \in {{\rm int}}(F) \cap S(B,C)\subset S(D', B') \cap S(B,C) $. We conclude that $(D',B')= (B,C)$, thus $F = G_j(B, C, C')\in \Omega_{n+1}(B,C)$,  hence $\widetilde e_{i'}(F) \in Q$. We have proved that $ \widetilde L_{n+1} \cap S(B,C) \subset Q$. 
\end{proof}

\begin{lemma}
\label{LemmaSON(n+1)-atomos}
Assume the hypothesis  of Lemmas \ref{LemmaConstruction(n+1)atoms} and \ref{LemmaPermutation}. Let $\Phi_{n+1} \in {{\rm Emb}}(D^m)$ be such that, besides the conditions in the hypothesis of Lemma \ref{LemmaConstruction(n+1)atoms}, satisfies the following:
$$ \Phi_{n+1}(\widetilde e)= \theta(\widetilde e) \ \ \forall \ \widetilde e \in \widetilde L_{n+1},$$
where $\theta$ is the permutation of $\widetilde L_{n+1}$ constructed in Lemma \ref{LemmaPermutation}.
Then,
\begin{enumerate}[a)]
\item  ${\mathcal A}_0, {\mathcal A}_1, \ldots,  {\mathcal A}_{n+1} $ are collections of atoms up to generation $n+1$ for $\Phi_{n+1}$.
\item For each $(E,F) \in {\mathcal A}_{n+1}^2$ such that  $E\stackrel{\Phi_{n+1}}{\rightarrow}F$, there exists exactly one  point  $\widetilde e_i(E)  \in \widetilde L_{n+1} \cap {{\rm int}}(E) $, and exactly one point $\widetilde e_{i'}(F) \in \widetilde L_{n+1} \cap {{\rm int}}(F)$,  such that $$  \Phi_{n+1}(\widetilde e_i(E)) =  \widetilde e_{i'}(F).$$
\end{enumerate}
\end{lemma}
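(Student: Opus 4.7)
The plan is to verify, in turn, the two conclusions, with the bulk of the work going into part (a). The verification of (a) splits into two tasks: showing that the old collections ${\mathcal A}_0, \ldots, {\mathcal A}_n$ remain atoms for $\Phi_{n+1}$, and checking the four conditions of Definition \ref{definitionAtomsGeneration-n} for ${\mathcal A}_{n+1}$ with respect to $\Phi_{n+1}$. Part (b) will then drop out almost immediately from part (a) together with Lemma \ref{LemmaPermutation}.

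For the old generations, the key observation is that $\Phi_{n+1}(A) = \Phi_n(A)$ as a set for each $A \in {\mathcal A}_n$. Indeed, $\Phi_{n+1}$ coincides with $\Phi_n$ on the open complement of $\bigcup_{A} A$, so by continuity they also agree on every boundary $\partial A$. Since $\Phi_{n+1}|_A$ and $\Phi_n|_A$ are homeomorphisms sharing the boundary image $\Phi_n(\partial A)$, a Jordan--Brouwer / invariance-of-domain argument applied near $A$ forces the two images to coincide. Consequently every $\Phi$-relation between boxes of generations $\leq n$ is preserved, and conditions a)--d) of Definition \ref{definitionAtomsGeneration-n} transfer verbatim from $\Phi_n$ to $\Phi_{n+1}$.

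For the new generation, Remark \ref{remarkConditionsabc} together with assertions b)--g) of Lemma \ref{LemmaConstruction(n+1)atoms} already yields conditions a), b), c) of Definition \ref{definitionAtomsGeneration-n}. Only condition d) remains. Fix $(D, B, C) \in {\mathcal A}_n^{3*}$ and $G \in \Gamma_{n+1}(D, B, C)$. The incidence half is a direct consequence of Lemma \ref{LemmaPermutation}(b): for each $E \in \Omega_{n+1}(B, C)$ there exist unique indices $(i, i')$ with $\theta(\widetilde e_i(G)) = \widetilde e_{i'}(E) \in \mathrm{int}(E)$, and since $\Phi_{n+1}$ coincides with $\theta$ on $\widetilde L_{n+1}$ by hypothesis, this forces $G \stackrel{\Phi_{n+1}}{\rightarrow} E$. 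The disjointness half, namely $\Phi_{n+1}(G) \cap E' = \emptyset$ for every $E' \in {\mathcal A}_{n+1} \setminus \Omega_{n+1}(B, C)$, is the crux. To establish it I would promote the boundary-and-image argument used above from the level of $n$-atoms to the level of $(n+1)$-atoms, invoking the construction sketched after Lemma \ref{LemmaConstruccionModelPsifisPhi}: the modification of $\psi_n$ is localized to small boxes $R$ whose $f$-preimages are precisely the $(n+1)$-atoms, and $\psi_{n+1}|_{\partial R} = \psi_n|_{\partial R}$, which yields $\Phi_{n+1}(G) = \Phi_n(G) \subset S(B, C)$. Since the components $S(B', C')$ of $\Phi_n(B') \cap C'$ are pairwise disjoint for distinct pairs $(B', C')$, and since by construction every $E' \in \Omega_{n+1}(B', C')$ is contained in $S(B', C')$, the required disjointness follows.

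Part (b) is then almost immediate. Given $(E, F) \in {\mathcal A}_{n+1}^2$ with $E \stackrel{\Phi_{n+1}}{\rightarrow} F$, the condition d) just proved, combined with the partition statements in Lemma \ref{LemmaConstruction(n+1)atoms}(c, f), shows that there is a unique triple $(D, B, C) \in {\mathcal A}_n^{3*}$ with $E \in \Gamma_{n+1}(D, B, C)$ and $F \in \Omega_{n+1}(B, C)$. Lemma \ref{LemmaPermutation}(b) then supplies unique indices $(i, i')$ with $\theta(\widetilde e_i(E)) = \widetilde e_{i'}(F)$, and the hypothesis $\Phi_{n+1}|_{\widetilde L_{n+1}} = \theta$ turns this into the claimed unique marked-point correspondence. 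The main obstacle throughout is the disjointness in condition d): the lemma's hypotheses by themselves do not pin down $\Phi_{n+1}(G)$ away from neighboring $(n+1)$-atoms, and closing this gap demands the additional structural information that the modification $\psi_{n+1} - \psi_n$ is supported inside the specific small boxes $R$ dictated by the construction.
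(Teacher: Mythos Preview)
Your overall structure matches the paper's: reduce to checking condition d) of Definition \ref{definitionAtomsGeneration-n} for ${\mathcal A}_{n+1}$, get the incidence half from Lemma \ref{LemmaPermutation}(b), and deduce part (b) from condition d) plus Lemma \ref{LemmaPermutation}(b). The one place where you diverge is the disjointness half of d).

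The paper does not try to show $\Phi_{n+1}(G)=\Phi_n(G)$ at the level of $(n{+}1)$-atoms. Instead it argues at the level of $n$-atoms: for $E\in\Gamma_{n+1}(D,B,C)$ one has $E\subset B$ and (the paper asserts) $E\subset \Phi_{n+1}^{-1}(C)$, whence $\Phi_{n+1}(E)\subset \Phi_{n+1}(B)\cap C$; any $E'\in\Omega_{n+1}(B',C')$ satisfies $E'\subset \Phi_{n+1}(B')\cap C'$, and for $(B,C)\neq(B',C')$ these sets are disjoint because distinct $n$-atoms and their $\Phi_{n+1}$-images are disjoint. This is cleaner than tracking $\Phi_{n+1}(G)$ directly.

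That said, your instinct that the bare hypotheses of the lemma do not by themselves force $E\subset\Phi_{n+1}^{-1}(C)$ is well founded: the construction in Lemma \ref{LemmaConstruction(n+1)atoms} only gives $E\subset\Phi_n^{-1}(C)$, and passing to $\Phi_{n+1}$ does use that in the actual construction $\widetilde\psi_{n+1}$ sends each $\psi_n^{-1}(R(B,C))$ onto $R(B,C)\subset C$. Your description of this extra input is slightly off, however: the $f$-preimages of the boxes $R(B,C)$ are not the $(n{+}1)$-atoms themselves, and one does not get $\Phi_{n+1}(G)=\Phi_n(G)$. What one gets is $E\subset\Phi_n^{-1}(R(B,C))$ together with $\widetilde\Phi_{n+1}\big(\Phi_n^{-1}(R(B,C))\big)=R(B,C)\subset C$, which already yields $\Phi_{n+1}(E)\subset C$ and is exactly what the paper's argument needs.
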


\begin{proof}
a) By Remark \ref{remarkConditionsabc}, it is enough to establish the truth of condition ii) of Definition \ref{definitionAtomsGeneration-n} with $n+1$ instead of $n$.

Take $E \in {\mathcal A}_{n+1}$. There exists $(D,B,C) \in {\mathcal A}_{n}^{3^*}$ such that $E \in \Gamma_{n+1}(D,B,C)$. Take $F \in \Omega_{n+1}(B,C)$. Applying Lemma \ref{LemmaPermutation}-b), there exists unique $(i, i')$  such that $\theta(\widetilde e_i(E))  = \widetilde e_{i'}(F)$. Therefore $$\Phi_{n+1}(\widetilde e_i(E)) )= \widetilde e_{i'}(F).$$
 Since $\widetilde e_i(E) \in {{\rm int}}(E)$ and $\widetilde e_{i'}(F) \in {{\rm int}}(F) $, we conclude that $\Phi_{n+1}(E)  \cap {{\rm int}}(F) \neq \emptyset $, namely, $E \stackrel{\Phi_{n+1}}{\rightarrow}F$. We have proved that
 $$E \stackrel{\Phi_{n+1}}{\rightarrow}F \ \ \forall E \in \Gamma_{n+1}(D,B,C), \ \forall F\in \Omega_{n+1}(B,C).$$

 Combining with the assertion g) of Lemma \ref{LemmaConstruction(n+1)atoms}, we deduce that, for all $(D,B,C) \in {\mathcal A}_n^{3*}$,   for all $E \in \Gamma_{n+1}(D,B,C)$, for all $F \in {\mathcal A}_{n+1}$
 \begin{equation}\label{eqn2000}
  E \stackrel{\Phi_{n+1}}{\rightarrow}F  {\mbox{ if and only if }} F\in \Omega_{n+1}(B,C).
 \end{equation}

Given $E \in {\mathcal A}_n$, let us count how many $F \in {\mathcal A}_n$ satisfy $E \stackrel{\Phi_{n+1}}{\rightarrow}F$. Given $E$, there exists unique $(D,B,C) \in {\mathcal A}_n^{3*}$ such that $E \in \Gamma_{n+1} (D,B,C)$. Applying   (\ref{eqn2000}) and assertion d) of Lemma \ref{LemmaConstruction(n+1)atoms}, we deduce
 $$ \# \{F \in {\mathcal A}_n \colon E \stackrel{\Phi_{n+1}}{\rightarrow}F \} = \Omega_{n+1}(B,C)= 2^{n+1}.$$

 Finally, given $F \in {\mathcal A}_n$, let us count how many $E \in {\mathcal A}_n$ satisfy $E \stackrel{\Phi_{n+1}}{\rightarrow}F$. Given $F$,   there exists unique $(B,C) \in {\mathcal A}_n^{2*}$ such that $F \in \Omega_{n+1} (B,C)$. Applying   (\ref{eqn2000}), assertion e) of Lemma \ref{LemmaConstruction(n+1)atoms}, and assertion ii) of Definition \ref{definitionAtomsGeneration-n} for the atoms of generation $n$ (for $\Phi_n$ and for $\Phi_{n+1}$), we obtain:
 $$ \# \{E \in {\mathcal A}_n \colon E \stackrel{\Phi_{n+1}}{\rightarrow}F \} = $$ $$ \# \{E \in {\mathcal A}_{n+1} \colon \exists D \in {\mathcal A}_n \mbox{ such that }D \stackrel{\Phi_{n+1}}{\rightarrow} B , E \in \Gamma_{n+1}(D,B,C)\}   =$$ $$ (\# \{D \in {\mathcal A}_n \colon D \stackrel{\Phi_{n+1}}{\rightarrow} B\})  \cdot (\#\Gamma_{n+1}(D,B,C)) = 2^n \cdot 2=  2^{n+1}.$$
 We have proved that the boxes of ${\mathcal A}_{n+1}$ satisfy equalities ii) of Definition \ref{definitionAtomsGeneration-n} for $\Phi_{n+1}$. The proof of assertion a) is complete

b)  Take $(D,B,C) \in {\mathcal A}_n^{3*}$ and $E \in \Gamma_{n+1}(D,B,C)$. Take $F \in {\mathcal A}_{n+1}$. From Remark \ref{remarkAtomsGeneration-n} (putting $n+1$ instead of $n$), we know that
$ \Phi_{n+1}(E)\cap F \neq \emptyset$ if and only if $F \in \Omega_{n+1}(B,C)$.
Applying Lemma \ref{LemmaPermutation}-b) there exists a unique $(i, i') \in \{1,2, \ldots, 2^{n+1}\}^2$ such that
 $\Phi_{n+1}(\widetilde e_i(E)) = \theta(\widetilde e_i(E)) = \widetilde e_{i'}(F)$.
 The proof of part b) is complete.
\end{proof}

\begin{lemma}
\label{LemmaHomeosEspecificandoFinitosPuntos2}
Let $\psi \in {{\rm Emb}}(D^m)$, $r \geq 1$, $P_1, P_2, \ldots P_r \subset D^m$ be pairwise disjoint boxes, and
 $Q_j :=\psi(P_j)$ for all $j\in \{1, 2, \ldots, r\}$. For $k \geq 1$ and  $j \in \{1,2, \ldots, r\}$, let
$p_{1,j}, \ldots, p_{k,j} \in {{\rm int}} (P_j)$ be distinct points and $q_{1,j}, \ldots, q_{k,j} \in {{\rm int}} (Q_j)$   also
be distinct points.
Then, there exists a $\psi^* \in {{\rm Emb}}(D^m)$ such that
$$\psi^*(x) = \psi(x) \ \ \forall \ x \not \in \bigcup_{j=1}^{r}{{\rm int}} (P_j) \mbox{ and }$$
$$\psi^*(p_{i,j}) = q_{i,j} \ \ \forall \ (i,j) \in \{1, \ldots, k\}\times \{1, 2, \ldots, r\}.$$
\end{lemma}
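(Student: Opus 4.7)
\textbf{Proof proposal for Lemma \ref{LemmaHomeosEspecificandoFinitosPuntos2}.}

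The plan is to induct on $r \geq 1$, using Lemma \ref{LemmaHomeosEspecificandoFinitosPuntos} at each step. The base case $r=1$ is exactly Lemma \ref{LemmaHomeosEspecificandoFinitosPuntos}. For the inductive step, assume the statement for $r-1$ and apply it to the pairwise disjoint boxes $P_1, \ldots, P_{r-1}$ (with the points $p_{i,j}, q_{i,j}$ for $j \leq r-1$) to obtain a homeomorphism $\psi' \in \mathrm{Hom}(D^m)$ such that $\psi'$ agrees with $\psi$ outside $\bigcup_{j=1}^{r-1} \mathrm{int}(P_j)$ and $\psi'(p_{i,j}) = q_{i,j}$ for every $(i,j) \in \{1,\ldots,k\}\times \{1,\ldots,r-1\}$.

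The key observation enabling the next step is that, since $P_r$ is disjoint from $\bigcup_{j=1}^{r-1} P_j$, and in particular from $\bigcup_{j=1}^{r-1} \mathrm{int}(P_j)$, the homeomorphism $\psi'$ coincides with $\psi$ on $P_r$; hence $\psi'(P_r) = \psi(P_r) = Q_r$. Therefore the hypotheses of Lemma \ref{LemmaHomeosEspecificandoFinitosPuntos} are met for $\psi'$ with box $P_r$ and points $p_{1,r}, \ldots, p_{k,r} \in \mathrm{int}(P_r)$, $q_{1,r}, \ldots, q_{k,r} \in \mathrm{int}(Q_r)$. Applying that lemma, I obtain $\psi^* \in \mathrm{Hom}(D^m)$ with $\psi^*(x) = \psi'(x)$ for all $x \notin \mathrm{int}(P_r)$ and $\psi^*(p_{i,r}) = q_{i,r}$ for all $i$.

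It remains to check the two conclusions. If $x \notin \bigcup_{j=1}^{r} \mathrm{int}(P_j)$, then $x \notin \mathrm{int}(P_r)$ gives $\psi^*(x) = \psi'(x)$, and $x \notin \bigcup_{j=1}^{r-1} \mathrm{int}(P_j)$ gives $\psi'(x) = \psi(x)$, so $\psi^*(x) = \psi(x)$. For the point conditions: if $j = r$ they hold by construction; if $j < r$, then $p_{i,j} \in \mathrm{int}(P_j)$ and the disjointness $P_j \cap P_r = \emptyset$ imply $p_{i,j} \notin \mathrm{int}(P_r)$, hence $\psi^*(p_{i,j}) = \psi'(p_{i,j}) = q_{i,j}$, completing the induction.

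The only subtlety (and not really an obstacle, but the one step to flag) is this preservation of the hypothesis $\psi'(P_r) = Q_r$ after the $(r-1)$-fold modification: it is precisely the pairwise disjointness of the $P_j$ that ensures the successive modifications made inside the interiors of $P_1, \ldots, P_{r-1}$ leave $\psi$ untouched on $P_r$, so Lemma \ref{LemmaHomeosEspecificandoFinitosPuntos} can be reapplied. No construction beyond a straightforward induction is required.
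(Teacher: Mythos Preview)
Your proof is correct and follows essentially the same idea as the paper's: apply Lemma~\ref{LemmaHomeosEspecificandoFinitosPuntos} box-by-box, using the pairwise disjointness of the $P_j$ to ensure the modifications do not interfere. The only difference is organizational---you proceed by induction on $r$, while the paper applies Lemma~\ref{LemmaHomeosEspecificandoFinitosPuntos} to each $P_j$ independently (starting from the original $\psi$) and then glues the resulting $\psi_j$ together piecewise to define $\psi^*$.
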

\begin{proof} It is straightforward.
\end{proof}

\begin{proof} {\em of Lemma \ref{LemmaConstruccionModelPsifisPhi}}
We  divide the construction of $\psi$ and $\Phi \in {\mathcal H}$ into several steps:

\noindent {\bf Step 1. Construction of the atom  of generation 0. }
Since $f(D^m) \subset {{\rm int}}(D^m)$, there exists a  box $A_0 \subset {{\rm int}}(D^m)$ such that $f(D^m) \subset {{\rm int}}(A_0)$.
  The box $A_0$ is the {\em atom of generation 0 } for the  embedding $\Phi_0 := f$ which satisfies $\Phi_0= \psi_0 \circ f$ where $\psi_0$  is the identity map.
Applying the Brower Fixed Point Theorem, there exists a point
 $ e_0 \in  {{\rm int}} (\Phi_0(A_0)) $
such that   $\Phi_0(e_0)= e_0$.
Define $S(A_0,A_0)$ to be the connected component of $A_0 \cap \Phi_0(A_0)$ containing $e_0$.

(The notation above is   too complicated, because simply $A_0 \cap \Phi_0(A_0) = \Phi(A_0)$, which is connected. But we introduced that complicated notation to make obvious that the inductive hypothesis that we will assume in the following step, is satisfied for $n=0$.)

\noindent {\bf Step 2.  Construction of the atoms of generation {\em n+1}}.
Inductively assume that we have constructed  families
${\mathcal A}_0, {\mathcal A}_1, $ $ \ldots, {\mathcal A}_{n}$ of atoms up to generation $n$ for $\Phi_n  = \psi_n \circ f$, where $\psi_n\in {{\rm Hom}}(D^m)$, satisfying:

\begin{enumerate}[I)]
\item $\psi_n|  _{\partial D^m}$ is the identity map,
\item $\label{eqn19-n}  \max_{B \in {\mathcal A}_i} \max\{{{\rm diam}} (B), {{\rm diam}}(f(B))\}< \frac{1}{2^i} \ \ \forall i \in \
\{0,1,\ldots n\};$

\item  $\Phi_i(x) = \Phi_{i-1}(x), \qquad \forall \ x \in D^m \setminus \bigcup_{B \in {\mathcal A}_{i-1}} B, \ \forall  i\in \{1,\ldots, n\};$

\item for all  $(D,B,C) \in  {\mathcal A}_n ^{3*}(\Phi_n)$ there exists a point  $e (D,B,C)  $   such that
   $$  L_n:= \{e(D,B,C) \colon (D,B,C) \in {\mathcal A}_n^{3*}\}$$ is $\Phi_n$-invariant, and
\begin{equation} \label{eqn34}  e(D,B,C)  \in {{\rm int}}\big (S(D,B) \cap \Phi_n^{-1}(S(B,C)) \big),\end{equation} where $S(D,B)$ and $S(B,C)$ are  (adequately chosen)  connected components of $B \cap \Phi_n(D)$ and of $C \cap \Phi_n(B)$ respectively. (Recall the notation in Remark \ref{remark S(P,Q), G_i}).

\vspace{.3cm}

\noindent  Note that the sets $S(B,C)$   and $S(B',C')$  are disjoint if $(B,C) \ne (B', C')$, because two different atoms of generation $n$ for $\Phi_n$ are  disjoint (recall Definition \ref{definitionAtomsGeneration-n}) and $\Phi_n$ is one to one.
\end{enumerate}

Let us construct the family ${\mathcal A}_{n+1}$ of boxes,  candidates to be atoms of generation $n+1$ for a new embedding   $\Phi_{n+1}$ (to be constructed as in Remark \ref{remark S(P,Q), G_i}), and let us construct the homeomorphism $\psi_{n+1}$ such that $\Phi_{n+1}= \psi_{n+1} \circ f$.

 First, for each   $(B,C) \in {\mathcal A}_n^{2^*}(\Phi_n)$, we choose a box $R(D,B)$ such that
\begin{equation}
 \label{eqn31-n}e(D,B,C) \in {{\rm int}}(R(B,C)), \ \ R(B,C) \subset {{\rm int}}\big (\Phi_n^{-1}(S (B,C) ) \big) \end{equation} $$\forall \ D \in {\mathcal A}_n \mbox{ such that } D {\stackrel{\Phi_n}{\rightarrow}} B.$$
  Note that such boxes $R(\cdot, \cdot)$ are pairwise disjoint, because they are contained in pairwise disjoint sets.

Recall that $e(D,B,C) \in L_n$ and the set $L_n$ is $\Phi_n$-invariant. Consider assertions (\ref{eqn34}) and (\ref{eqn31-n}). Thus,

 $$e(D,B,C)  \in {{\rm int}} \Big(R(B,C) \cap \Phi_n (R(D,B))\Big) \neq \emptyset.$$

Next, for each $(D,B,C) \in {\mathcal A}_n^{3*}$ we choose two pairwise disjoint  boxes, $G_0(D,B,C)$ and $ G_1(D,B,C)$,
contained in  the interior of $   R(B,C) \cap \Phi_n (R(D,B)) $, satisfying
\begin{equation} \label{eqn26a}\max\{{{\rm diam}}(G_i(D,B,C)) ,  {{\rm diam}}(f(G_i(D,B,C))) \} < \frac{1}{2^{n+1}}\end{equation} for $i= 0,1$.
Now, we use the notation of Remark \ref{remark S(P,Q), G_i},  to construct
   the family  ${\mathcal A}_{n+1}$ of all the boxes $G_i(D,B,C)$.    The boxes of the family ${\mathcal A}_{n+1}$ will be the $(n+1)$-atoms of two new embeddings $\widetilde \Phi_{n+1}$ and $\Phi_{n+1}$ that we will construct as follows.

First, in the interior of each box $E \in {\mathcal A}_{n+1}$ we choose $2^{n+1}$ distinct points $\widetilde e_i(E), i= 1, 2 \ldots, 2^{n+1}$, and denote $$\widetilde L_{n+1} := \{\widetilde e_i(E)\colon E \in {\mathcal A}_{n+1}, \ 1 \leq i \leq 2^{n+1}\}.$$ Second, we build a permutation $ \widetilde \theta  $ of $\widetilde L_{n+1}$  satisfying the properties of Lemma \ref{LemmaPermutation}.

Third, applying Lemma \ref{LemmaHomeosEspecificandoFinitosPuntos2}, we construct $\widetilde \psi_{n+1} \in {{\rm Hom}}(D^m) $  satisfying the following   constraints.

\noindent{(a)} For all $(B,C) \in {\mathcal A}_n^{2^*}(\Phi_n)$:
$$\widetilde \psi_{n+1}|_{f(R(B,C))}: f(R(B,C))  \rightarrow \psi_n \circ f (R(B,C)) = \Phi_n(R(B,C,)),$$
 \noindent{(b)}    $$ \widetilde \psi_{n+1}  (x) = \psi_n(x) , $$ $$\forall  x \not \in  \bigcup_{(B,C)\in {\mathcal A}_n^{2*}  \mbox{ for }\Phi_n } f ( R(B,C)), $$

  \noindent{(c)} $$ \widetilde \psi_{n+1}(f(\widetilde e)) = \widetilde \theta(\widetilde e), \qquad \forall \ \widetilde e \in \widetilde L_{n+1}.$$
 To prove the existence of such a homeomorphism  $\widetilde \psi_{n+1}$  we must verify the hypotheses of Lemma \ref{LemmaHomeosEspecificandoFinitosPuntos2}.
 On the one hand, the boxes $R(B, C)$ where $(B,C) \in {\mathcal A}_n^{2*}$ are pairwise disjoint. So  their  images  by
 the embedding $f$ are also  pairwise disjoint boxes.
 On the other hand, for each $(B,C) \in {\mathcal A}_n^{2*}(\Phi_n)$,
 the finite set $$\{f(\widetilde e): \ \ \ \widetilde e \in \widetilde L_{n+1}\,  \cap \,  {{\rm int}}(  R(B,C))\} = $$ $$\{f(\widetilde e): \ \ \ \widetilde e \in \widetilde L_{n+1}\,  \cap \,  {{\rm int}}(  \Phi_n^{-1}(S(B,C)))\}$$ is contained in  the interior of  $f   (R(B,C)).$
     Besides, it coincides with $$ \{f(\widetilde e_i(E)) \colon E \in \Gamma_{n+1}(D, B, C)   \mbox{ for some } D \in {\mathcal A}_n, \  i = 1, \ldots, 2^{n+1}\}  $$
      (recall   Lemma \ref{lemma2.10-f}).
     So, the  image  by the permutation    $\widetilde \theta $ of
         such points $\widetilde e_{ \cdot } (\cdot)$ is $$\{\widetilde \theta(\widetilde e_i(E))\colon E \in \Gamma_{n+1}(D, B, C) \ \ \mbox{ for some } D \in {\mathcal A}_n, \ i=1,2, \ldots 2^{n+1}\}$$
Applying  Lemma   \ref{LemmaPermutation}-c), the latter set is
$$ \{\widetilde e_k(F) \colon F \in \Omega_{n+1}(B,C), k = 1,2, \ldots 2^{n+1} \} = \widetilde L_{n+1} \cap S(B,C), $$ which is contained  in the interior of $\Phi_n(R(B,C)) = \widetilde \psi_n (f(R(B,C)))$

We have proved that the points $f(\widetilde e(\cdot))$   are contained in the interior of the boxes
$f(R(\cdot, \cdot))$, and that their required images $\theta( \widetilde e(\cdot))$ by the homeomorphism $\widetilde \psi_{n+1}$ (to be constructed), are in the interior of the images by $\widetilde \psi_n$ of those boxes.
So, the hypothesis of Lemma \ref{LemmaHomeosEspecificandoFinitosPuntos2} is satisfied.

 We construct  $$\widetilde \Phi_{n+1} :=\widetilde  \psi_{n+1} \circ f.$$
 Since $$\widetilde \Phi_{n+1}(x) = \widetilde \psi_{n+1} \circ f = \widetilde \psi_n \circ f= \Phi_n (x) \ \   \forall    x \not  \in $$ $$ \bigcup_{ (B, C) \in  {\mathcal A}_n^{2*} \mbox {\footnotesize{ for }}\Phi_n} \mbox{int}(R(B,C)) \subset \bigcup_{ (B, C) \in  {\mathcal A}_n^{2*} \mbox {\footnotesize{ for }}\Phi_n} \mbox{int} ( \Phi_n^{-1}(S(B,C))),$$  the hypothesis of Lemma \ref{LemmaConstruction(n+1)atoms} is satisfied. Therefore the same atoms  up to     generation $n$ for $\Phi_n$ are still atoms up to  generation   $n$ for $\widetilde \Phi_{n+1}$.
     But moreover,  applying Lemma \ref{LemmaSON(n+1)-atomos}-a), the  boxes of the new family ${\mathcal A}_{n+1}$  are now  $(n + 1)$-atoms for $\widetilde \Phi_{n+1}$.

\noindent {\bf Step 3.  Construction of $\Phi_{n+1}$ and $\psi_{n+1}$.}
To argue by induction, we will not use the embedding $\widetilde \Phi_{n+1}$ and the homeomorphism $\widetilde \psi_{n+1}$, even if $\widetilde \Phi_{n+1} = \widetilde \psi_{n+1} \circ f$ already has families ${\mathcal A}_0, \ldots, {\mathcal A}_n, {\mathcal A}_{n+1}$ of atoms up to generation $n+1$, as required. Rather, we
 need to modify them  to obtain a new embedding $\Phi_{n+1}$ and a new homeomorphism $\psi_{n+1}$ such that the inductive hypothesis (IV) and Assertion (\ref{eqn34}) also holds for
 $n+1$ instead of $n$.
We will modify $\widetilde \psi_{n+1}$   only in the interiors of the boxes $f(G)$ for all the atoms $G \in {\mathcal A}_{n+1}$ for $\widetilde \Phi_{n+1}$, we will
construct a new homeomorphism  $\psi_{n+1}$ such that  $\Phi_{n+1} := \psi_{n+1} \circ f$  has the same atoms up to generation $n+1$
of $\widetilde \Phi_{n+1}$ (see the proof of part a) of Lemma \ref{LemmaConstruction(n+1)atoms}), and besides satisfies the inductive hypothesis (IV) with $n+1$ instead of $n$.

From the above construction of $\widetilde \psi_{n+1}$ and $\widetilde \Phi_{n+1}$, and from Lemma \ref{LemmaSON(n+1)-atomos}-b), we know that for each $(G,E) \in {\mathcal A}_{n+1}^{2*}$ for $\widetilde \Phi_{n+1}$, there exists a unique point  $\widetilde e_i(G)  \in {{\rm int}}(G) $, and a unique point $\widetilde e_k(E)$,  such that $$\widetilde \Phi_{n+1}(\widetilde e_i(G)) = \widetilde \psi_{n+1}  \circ f (\widetilde e_i(G)) = \widetilde\theta(\widetilde e_i(G))= \widetilde e_k(E) \in {{\rm int}}(E).$$  Therefore
$$ \widetilde e_k(E) \in {{\rm int}} ( E \cap \widetilde  \Phi _{n+1}(G) ).$$

\noindent Denote by $$S(G,E)$$ the connected component of  $ E\,  \cap \, \widetilde \Phi_{n+1}(G)  $ that contains the point   $\widetilde e_k(E) $.
 Choose $2^{n+1}$ distinct points $$e_i(G, E)  \in {{\rm int}}(S(G,E)), \ \  i= 1, \ldots, 2^{n+1} $$
 and a  permutation $\theta $   of the finite set \begin{equation}\label{eqnLn+1}   L_{n+1}:= \{e_i(G,E): \ \ (G,E) \in  {\mathcal A}_{n+1}^{2*} \mbox{ for } \widetilde \Phi_{n+1}, \ \  i= 1, \ldots, 2^{n+1}\}\end{equation}   such that
  for each fixed  $(G,E,F) \in {\mathcal A}_{n+1}^{3*}$ for $\widetilde \Phi_{n+1} $, there exists a unique point $e_i(G,E)$, and a unique point $e_k(E,F)$, satisfying
  \begin{equation}
  \label{eqn2001}
  \theta (e_i(G,E)) = e_k(E,F).\end{equation}
  The proof of the existence of such permutation is similar (but simpler) than the the proof of Lemma \ref{LemmaPermutation}.

Applying Lemma \ref{LemmaHomeosEspecificandoFinitosPuntos2},   construct a homeomorphism $$ \psi_{n+1}\in {{\rm Hom}}(D^m)$$
such that $$\psi_{n+1}|_{f(G)}:  f(G)\rightarrow  \widetilde \psi_{n+1}(f(G)) = \widetilde \Phi_{n+1}(G) \ \ \forall \ G \in {\mathcal A}_{n+1} \mbox{ for } \widetilde \Phi_{n+1},$$
$$ \psi_{n+1}(x)  =  \widetilde \psi_{n+1}(x) \ \ \forall \ x \not \in \bigcup_{G \in {\mathcal A}_{n+1}} f(G), $$
\begin{equation}
\label{eqn2002}
   \psi_{n+1} (f(e_i(G,E)) = \theta  (e_i(G,E)) \end{equation} $$ \forall \ (E, G)\in {\mathcal A}_{n+1}^2 \mbox{ such that } G \stackrel{\widetilde \Phi_{n+1}} \rightarrow E, \qquad \forall \ i= 1, \ldots, 2^{n+1}, $$
and  extend $  \psi_ {n+1}$ to the whole box $D^m$ by defining  $\psi_{n+1}(x) = \widetilde \psi_{n+1}(x), $   $\forall \ x \in {D^m \setminus \bigcup_{G \in {\mathcal A}_{n+1}} f(G)}.$
In particular $$  \psi_{n+1}|_{\partial D^m} = \widetilde \psi_{n+1}|_{\partial D^m} = \mbox{id}|_{\partial D^m}.$$
Define \begin{equation}
\label{eqn2003}
  \Phi_{n+1} :=  \psi_{n+1} \circ f.\end{equation} As said above, the property that $\Phi_{n+1} $ coincides with $   \widetilde \Phi_{n+1}$ outside all the atoms of ${\mathcal A}_{n+1}$ for $\widetilde \Phi_{n+1}$ implies that the boxes of the families ${\mathcal A}_0, \ldots, {\mathcal A}_{n+1}$, which are the family of atoms up to generation $n$ for $\widetilde \Phi_{n+1}$, are also atoms   up to generation $n+1$ for $\Phi_{n+1}$. But now, due to equalities (\ref{eqn2001}), (\ref{eqn2002}) and (\ref{eqn2003}), they have the following additional property:
there exists a one-to-one correspondence between  the 3-tuples
 $(G, E, F)  \in {\mathcal A}_{n+1}^{3*}$  (for $\widetilde \Phi_{n+1}$ and also for $ \Phi_{n+1}$ )  and the  points of the set  $L_{n+1}$ of Equality (\ref{eqnLn+1}), such that
\begin{equation} \label{eqn30z}e(G,E,F) := e_i(G, E) \in {{\rm int}}\big (S(G,E) \cap \Phi_{n+1}^{-1}(S(E,F)) \big)., \end{equation}
Recall that   $S(G,E)$ and $S(E,F)$ are the  connected components of $E \cap \Phi_n(G)$ and of
$F \cap \Phi_n(E)$  respectively, that were chosen after $\widetilde \Phi_{n+1}$ was constructed.

 Besides, by construction, the finite set $L_{n+1}$ is $\Phi_{n+1}$-invariant. In fact, $\Phi_{n+1}(L_{n+1}) = \psi_{n+1} (f(L_{n+1})) = \theta(L_{n+1}) = L_{n+1}$.
Therefore, the inductive hypothesis I), II), III) and IV)   holds for $n+1$  and the inductive construction is complete.

\noindent{\bf Step 4. The limit homeomorphisms. }
From the above construction we have:
 $$\psi_{n+1}(x) = \widetilde \psi_{n+1}(x) = \psi_n(x) \mbox{ if }  x \not \in  \bigcup_{ B,C } \psi_n^{-1}  ( R(B,C)) \subset \bigcup_{B} f(B) $$

 $$\psi_{n+1}\circ \psi_n^{-1}(R(B,C))  = \widetilde \psi_{n+1} \circ \psi_n^{-1} (R(B,C))  =$$ $$ \psi_n \circ \psi_n^{-1}( R(B,C))  =  R(B,C) \subset C.$$
Therefore,
$$\mbox{dist}(\psi^{-1}_{n+1}(x),  \psi^{-1}_n(x)) \leq   \max_{B \in {\mathcal A}_n} {{\rm diam}}(f(B)) < \frac{1}{2^{n}}, \qquad \forall \ x \in D^m; $$
$$\mbox{dist}(\psi_{n+1}(x),  \psi_n(x)) \leq   \max_{C \in {\mathcal A}_n} {{\rm diam}}(C) < \frac{1}{2^n}, \qquad \forall \ x \in D^m, $$
\begin{equation} \label{eqn27a} \| \psi_{n+1} - \psi_n\|_{{{\rm Hom}}}   < \frac{1}{2^n}.\end{equation}

From Inequality (\ref{eqn27a}) we deduce that the sequence $\psi_n $ is Cauchy in ${{\rm Hom}}(D^m)$. Therefore, it converges to a homeomorphism   $\psi $. Moreover, by construction  $\psi_n|_{\partial D^m} =     \mbox{id}|_{\partial D^m}$ for all $n \geq 1$. Then  $\psi |_{\partial D^m} =     \mbox{id}|_{\partial D^m}.$

The convergence of $\psi_n$ to $\psi$ in ${{\rm Hom}}(D^m)$ implies that
   $\Phi_n =   \psi_n \circ f  \in {{\rm Emb}}(D^m)  $  converges  to $\Phi = \psi \circ f \in {{\rm Emb}}(D^m)$ as $n \rightarrow + \infty$.  Since $f(D^m) \subset {{\rm int}}(D^m)$ and $\psi \in {{\rm Hom}}(D^m)$, we deduce that  $\Phi(D^m) \subset {{\rm int}}(D^m).$
Moreover, by construction ${\mathcal A}_0, {\mathcal A}_1, \ldots, {\mathcal A}_n$ are families of atoms up to generation $n$ for $\Phi_n$, and  $\Phi_j(x)= \Phi_n (x) $ for all $ x \in D^m \setminus \bigcup_{B \in {\mathcal A}_n} B$ and for all $ j \geq n .$   Since  $\lim_j \Phi_j = \Phi$,   the boxes of the family   $ {\mathcal A}_n $ are $n$-atoms for $\Phi$ for all $n \geq 0$.
Finally, from Inequality (\ref{eqn19-n}, the diameters of the $n$-atoms converge uniformly to zero as $n \rightarrow + \infty$. Thus $\Phi $ is a model according to Definition \ref{DefinitionModel}.
\end{proof}

\section{Infinite metric entropy and mixing property of the models.} \label{SectionMainLemma}
The purpose of this section is to prove the following Lemma.
\begin{lemma} 
\label{LemmaMain}
Let ${\mathcal H} \subset C^0(D^m)$ be a family of models with $m \ge 2$. \em
For each $\Phi \in {\mathcal H}$ \em  \em  there exists a $\Phi$-invariant mixing (hence ergodic) measure $\nu$  supported on a $\Phi$-invariant Cantor set $\Lambda \subset D^m $ such that
$  h_{\nu}(\Phi) = + \infty.$
\end{lemma}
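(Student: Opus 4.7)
The plan is to construct $\nu$ as the projective limit of natural uniform Markov measures defined on the hierarchy of atom families $\{\mathcal{A}_n\}$. Let me first set up the symbolic framework. For each $n \geq 0$, let $\Sigma_n \subset \mathcal{A}_n^{\mathbb{Z}_{\geq 0}}$ be the one-sided topological Markov chain consisting of admissible sequences $(A^0, A^1, \ldots)$ with $A^k \stackrel{\Phi}{\to} A^{k+1}$. Define the compact $\Phi$-invariant set
$$\Lambda := \bigcap_{n \geq 0}\ \bigcap_{k \geq 0} \Phi^{-k}\Big(\bigcup_{A \in \mathcal{A}_n} A\Big),$$
together with the natural coding maps $\pi_n : \Lambda \to \Sigma_n$. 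Since $\max_{A \in \mathcal{A}_n} \operatorname{diam}(A) \to 0$ by \eqref{eqnLimitDiamAtoms=0}, every $x \in \Lambda$ is uniquely determined by the consistent sequence $(\pi_n(x))_n$ in the inverse limit, and $\Lambda$ is a Cantor set.

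Next I would introduce the uniform Markov measure $\mu_n$ on $\Sigma_n$: stationary distribution $\pi_A = 2^{-n^2}$ for each $A \in \mathcal{A}_n$ and transition probabilities $p_{AB} = 2^{-n}$ for every admissible $A \to B$. Equalities \eqref{eqn100} and \eqref{eqn101} verify that $\pi$ is stationary (both the in- and out-degrees equal $2^n$). The key compatibility step is the identity $\rho_n^*\mu_{n+1} = \mu_n$, where $\rho_n: \Sigma_{n+1} \to \Sigma_n$ sends each $(n{+}1)$-atom to its parent $n$-atom; a short combinatorial count using conditions a)--d) of Definition \ref{definitionAtomsGeneration-n}, exactly analogous to the cardinality calculations in Lemma \ref{LemmaConstruction(n+1)atoms}, shows that summing the $\mu_{n+1}$-cylinder weights over children reproduces $\mu_n$-cylinders. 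By the Kolmogorov extension theorem this yields a single Borel probability measure $\nu$ on the inverse limit, which I identify with a measure on $\Lambda$ via the coding; shift-invariance of every $\mu_n$ lifts to $\Phi$-invariance of $\nu$.

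The entropy computation is then immediate. For the partition $\mathcal{P}_n := \{A \cap \Lambda : A \in \mathcal{A}_n\}$, the Kolmogorov--Sinai formula applied to the Markov measure gives
$$h_\nu(\Phi,\mathcal{P}_n) \;=\; h(\mu_n,\sigma) \;=\; -\sum_{A \in \mathcal{A}_n} \pi_A \sum_{B:\, A\to B} p_{AB} \log p_{AB} \;=\; n\log 2,$$
so $h_\nu(\Phi) \geq n\log 2$ for every $n$, forcing $h_\nu(\Phi) = +\infty$. For mixing, I would prove by induction on $n$ that the directed graph $(\mathcal{A}_n,\to)$ is strongly connected and aperiodic: the cases $n=0,1$ are explicit from Definition \ref{definitionAtomsOfGeneration0-1} (every pair is adjacent and every vertex has a self-loop); the inductive step lifts any level-$(n{-}1)$ path $B=B_0\to \cdots\to B_r=B'$ to a level-$n$ path joining chosen $G\in\Omega_n(\cdot,B)$ to $G'\in\Omega_n(\cdot,B')$ by selecting successors inside each $\Omega_n(B_i,B_{i+1})$ using condition d), while aperiodicity at level $n$ follows from the existence of level-$(n{-}1)$ self-loops and the fact that $\Gamma_n(B,B,B)\neq\emptyset$ whenever $B\to B$. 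Hence each $\mu_n$ is mixing, and since the $\mathcal{P}_n$ generate the Borel $\sigma$-algebra on $\Lambda$, mixing of $\nu$ reduces by a standard density argument to mixing at each finite level.

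The principal technical obstacle is the verification that the coding is sufficiently rich: one must ensure that the inverse limit $\varprojlim_n \Sigma_n$ is genuinely realized inside $\Lambda$, i.e., that every admissible symbolic orbit corresponds to an actual $\Phi$-orbit, so that $\pi_n^*\nu = \mu_n$ holds as claimed. The condition $A\to B$ says only $\Phi(A)\cap\operatorname{int}(B)\neq\emptyset$, which a priori does not guarantee orbit realizability, but the nested construction of models (each $(n{+}1)$-atom $G\in\Gamma_{n+1}(D,B,C)$ lies in $\operatorname{int}(S(D,B))\cap\Phi^{-1}(\operatorname{int}(S(B,C)))$, as in the proof of Lemma \ref{LemmaConstruccionModelPsifisPhi}) provides compatibility at every depth, and a standard compactness argument on the nested compact preimages $\Phi^{-k}(A)\cap \cdots$ then yields the required orbits. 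Pinning this down rigorously, together with the consistency check $\rho_n^*\mu_{n+1}=\mu_n$, constitutes the bulk of the work.
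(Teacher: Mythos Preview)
Your proposal is correct and the measure you construct coincides with the paper's, but the packaging differs.  The paper defines $\Lambda := \bigcap_{n\ge 0}\bigcup_{A\in\mathcal A_n} A$ (no preimages), proves $\Phi(\Lambda)=\Lambda$ directly, and then sets $\nu(A\cap\Lambda)=(\#\mathcal A_n)^{-1}$ as a pre-measure; invariance, mixing, and $h_\nu(\Phi,\mathcal A_n)=n\log 2$ are all checked by explicit cylinder-counting lemmas showing that each finite admissible $(l{+}1)$-path $(A_0,\ldots,A_l)$ of $n$-atoms corresponds to a nonempty union of exactly $\#\mathcal A_{n+l}/(2^{nl}\,\#\mathcal A_n)$ atoms of generation $n+l$ inside $\Lambda$.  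You instead assemble the Markov measures $\mu_n$ on the shifts $\Sigma_n$, verify $\rho_n^*\mu_{n+1}=\mu_n$, pass to the inverse limit, and transport back to $\Lambda$ via the coding.  This makes the entropy and mixing arguments short (off-the-shelf Markov-chain facts), at the cost of having to prove that the coding $\Lambda\to\varprojlim_n\Sigma_n$ is a bijection---precisely the obstacle you flag.  The paper needs only the \emph{finite} version of this realizability (every finite admissible path is realized by a point of $\Lambda$), which it proves by refining paths one generation at a time and using $\max_{A\in\mathcal A_n}\operatorname{diam}A\to 0$; your surjectivity statement then follows from that plus compactness.  Both routes ultimately rest on the same path-lifting combinatorics encoded in conditions a)--d) of Definition~\ref{definitionAtomsGeneration-n}, so the total amount of work is comparable.
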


Throughout this section we assume $m \ge 2$ and we suppose there is a  given $\Phi \in {\mathcal H}$, with a given sequence of families ${\mathcal A}_n \ (n \ge 0)$ of atoms of generations $n \ge 0$ respectively for $\Phi$. When we refer to the atoms of generation $n$, we omit writing $\Phi$ and the families of atoms of previous generation, which are the previously given map and families.

\begin{remark} \em
\label{RemarkMainLemma} Lemma \ref{LemmaMain} holds, in particular, for ${\mathcal H} \cap {{\rm Emb}}(D^m)$.
\end{remark}

To prove Lemma \ref{LemmaMain} we need to   define   the paths of atoms and to discuss their properties. We also need to define the invariant Cantor set $\Lambda$ that will support the measure $\nu$  and prove some of its topological dynamical properties.

\begin{definition} {\bf (Paths of atoms)}
  \label{definitionPathOfAtoms} \em

  Let $\Phi \in {\mathcal H} \subset C^0(D^m)$, $l \geq 2$ and   let $(A_1, A_2, \ldots, A_l)$ be a $l$-tuple of atoms for $\Phi$ of the same generation $n$, such that
$$A_i \rah A_{i+1}, \qquad \forall \ i \in \{1,2, \ldots, l-1\}.$$
We call $(A_1, A_2, \ldots, A_l)$ an \em $l$-path of $n$-atoms from $A_1$ to $A_l$. \em
Let  ${\mathcal A}_n ^{l*} $ denote  the family of all the $l$-paths of atoms of generation~$l$.
\end{definition}

\begin{lemma}
\label{lemmaPathsOfAtoms}
For all $n \geq 1$, for all $l \geq 2 n$, and for all   $ A_1,A_2  \in {\mathcal A}_n$ there exists an $l$-path   of $n$-atoms from $A_1$ to $A_2$.
\end{lemma}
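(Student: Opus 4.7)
The plan is to proceed by induction on the generation $n$, exploiting the recursive nested structure of the atoms provided by Definition \ref{definitionAtomsGeneration-n}.

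For the base case $n = 1$, the family ${\mathcal A}_1 = \{B_1, B_2\}$ satisfies $B_i \rah B_j$ for every $(i,j) \in \{1,2\}^2$, including the self-loops $B_i \rah B_i$. Hence for any $l \geq 2$ and any $A_1, A_2 \in {\mathcal A}_1$, an $l$-path is obtained by staying at $A_1$ for $l-1$ steps and then moving to $A_2$.

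For the inductive step, assume the lemma holds at generation $n - 1$ and fix $l \geq 2n$ and $A_1, A_2 \in {\mathcal A}_n$. By equations (\ref{eqn99a})--(\ref{eqn99}), there exist unique triples $(D_0, B_0, C_0), (D_l, B_l, C_l) \in {\mathcal A}_{n-1}^{3*}$ with $A_1 \in \Gamma_n(D_0, B_0, C_0)$ and $A_2 \in \Gamma_n(D_l, B_l, C_l)$. The key observation is the following lifting correspondence. Any $l$-path $A_1 \rah A_2 \rah \cdots \rah A_l$ of $n$-atoms determines, via condition d) of Definition \ref{definitionAtomsGeneration-n}, an $(l+2)$-tuple $X_0, X_1, \ldots, X_{l+1}$ of $(n-1)$-atoms with $X_i \rah X_{i+1}$ for all $i$, such that $A_i \in \Gamma_n(X_{i-1}, X_i, X_{i+1})$. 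Conversely, once an $(l+2)$-tuple of this form is given with $(X_0,X_1,X_2) = (D_0,B_0,C_0)$ and $(X_{l-1},X_l,X_{l+1}) = (D_l,B_l,C_l)$, any choice of $A_i \in \Gamma_n(X_{i-1}, X_i, X_{i+1})$ (a non-empty set of size $2$) for each $2 \leq i \leq l-1$ yields a valid $l$-path: indeed, since $A_{i+1} \in \Gamma_n(X_i, X_{i+1}, X_{i+2}) \subset \Omega_n(X_i, X_{i+1})$, condition d) applied to $A_i \in \Gamma_n(X_{i-1}, X_i, X_{i+1})$ gives $A_i \rah A_{i+1}$.

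It therefore suffices to produce the intermediate $(n-1)$-segment $X_2, X_3, \ldots, X_{l-1}$ running from $X_2 = C_0$ to $X_{l-1} = D_l$, consisting of $l - 2$ atoms. Since $l \geq 2n$ we have $l - 2 \geq 2(n-1)$, so the inductive hypothesis applied at generation $n - 1$ produces such a path. Prepending the prefix $(D_0, B_0)$ and appending the suffix $(B_l, C_l)$ — which are $\Phi$-related to the endpoints because $(D_0, B_0, C_0), (D_l, B_l, C_l) \in {\mathcal A}_{n-1}^{3*}$ — we obtain the required $(l+2)$-tuple $X_0, \ldots, X_{l+1}$, and the lifting above completes the construction. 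The main difficulty is bookkeeping rather than ideas: one must notice that the ``three consecutive $(n-1)$-atoms'' data $(X_{i-1}, X_i, X_{i+1})$ is exactly what labels an $n$-atom, and this accounts for the $+2$ discrepancy between generations — two vertices at each end of the $(n-1)$-path are pinned by $A_1$ and $A_2$ — giving precisely the threshold $l \geq 2n$.
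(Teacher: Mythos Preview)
Your proof is correct and follows essentially the same inductive strategy as the paper's own argument: reduce to an $(n-1)$-path between the ``outer'' labels of the $\Gamma$-triples of the endpoints, then lift back to generation $n$ using condition d). The only differences are cosmetic---you index so that the target $n$-path has length $l$ and the auxiliary $(n-1)$-path has length $l-2$, whereas the paper calls the $(n-1)$-path length $l$ and obtains an $(l+2)$-path at level $n$; and you record the $\Gamma$-triple of the terminal atom where the paper uses only its $\Omega$-pair---but the underlying construction is the same.
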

\begin{proof}
 For $n= 1$, the result is trivial.
Let us assume by induction that the result holds for some $n-1 \geq 1$ and let us prove it for $n$.

Let  $E, F \in {\mathcal A}_{n}$. From equality (\ref{eqn99}) of Remark \ref{remarkAtomsGeneration-n}, there exists unique atoms $B_{-1}, B_0, B_1 \in {\mathcal A}_{n-1} $ such that   $E \in \Gamma_n(B_{-1}, B_0, B_1).$   Then
   $B_{-1}  \rah  B_0, $ $ E  \subset B_0 $ and, by Remark  \ref{remarkAtomsGeneration-n}:
\begin{equation} \label{eqn20}E   \rah  E_1, \qquad \forall \ E_1 \in \Omega_n(B_0, B_1).\end{equation}
Analogously, there exists unique atoms  $B_{*}, B_{*+1} \in {\mathcal A}_{n-1} $ such that  $F \in \Omega_n(B_*, B_{*+1}).$  Then  $B_*  \rah B_{*+1}, $ $ F  \subset B_{*+1} $ and     \begin{equation} \label{eqn21}E_{*}  \rah  F, \qquad \forall \ E_* \in \bigcup_{\substack {B_{*-1} \in {\mathcal A}_{n-1}:   \\ B_{*-1} \rah B_*}} \Gamma_n(B_{*-1}, B_*, B_{*+1})\end{equation}

Since $B_1, B_{*} \in {\mathcal A}_{n-1}$ the induction hypothesis ensures that that for all  $l \geq 2n-2  $ there exists an $l$-path $(B_1, B_2, \ldots, B_{l})$ from $B_1$ to $B_l= B_*$. We write $B_{*-1} = B_{l-1}$, $B_{*} = B_l, \   B_{* + 1} = B_{l+1}$.
So 
(\ref{eqn21}) becomes
 \begin{equation} \label{eqn21b}E_{l}  \rah  F, \qquad \forall \ E_l  \in \Gamma_n(B_{l-1}, B_l, B_{l+1})\end{equation}

Taking into account that $B_{i-1} \ \rah B_i$ for $1 <i \leq l$,  and applying  Remark \ref{remarkAtomsGeneration-n}, we deduce  that, if     $  E_{i-1}  \in \Gamma_n( B_{i-2} , B_{i-1}, B_{i} ) \subset {\mathcal A}_n,  $  then
\begin{equation}
\label{eqn23}E_{i-1}  \rah E_{i}, \qquad \forall \ E_i \in \Omega_n(B_{i-1}, B_{i}), \qquad \forall \ 1 <i \leq l.\end{equation}
Combining  (\ref{eqn20}), (\ref{eqn21b}) and (\ref{eqn23}) yields an $(l+2)$-path  $(E, E_1, \ldots, E_{l}, F) $ of atoms of generation $n$ from $E$ to $F$, as required.
\end{proof}

\begin{lemma}
{\label{lemmaPathsOfAtoms2}}
Let   $n, l  \geq 2$. For each   $l$-path $(B_1, \ldots, B_{l})$   of $(n-1)$-atoms
 there exists an $l$- path   $(E_1, E_2, \ldots, E_{l})$  of $n$-atoms   such that
$E_i \subset {{\rm int}}(B_i) $ for all $i= 1, 2, \ldots,   l.$

\begin{proof}
In the proof of Lemma \ref{lemmaPathsOfAtoms} for each $l$-path  $(B_1, B_2, \ldots, B_l)$  of $(n-1)$-atoms we have constructed the $l$-path $(E_1, E_2, \ldots, E_l)$ of $n$-atoms as required.
\end{proof}
\end{lemma}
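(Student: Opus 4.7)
The plan is to build the desired $l$-path of $n$-atoms by choosing each $E_i$ as a single child of $B_i$ with the correct predecessor and target labels, so that the incidence relations of the original path lift automatically. The key tool is condition (d) of Definition \ref{definitionAtomsGeneration-n}, which converts the two-step compatibility $B_{i-1} \rah B_i \rah B_{i+1}$ of $(n-1)$-atoms into a one-step $\Phi$-relation among appropriately chosen children.

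First I would fix some auxiliary data at the two endpoints. Since $n \geq 2$, equality \eqref{eqn101} applied at level $n-1$ guarantees that every $(n-1)$-atom has at least one predecessor and at least one successor under $\rah$, so we may select any $D_0 \in \mathcal{A}_{n-1}$ with $D_0 \rah B_1$ and any $C_0 \in \mathcal{A}_{n-1}$ with $B_l \rah C_0$. Then pick $E_1 \in \Gamma_n(D_0, B_1, B_2)$, $E_i \in \Gamma_n(B_{i-1}, B_i, B_{i+1})$ for $2 \leq i \leq l-1$, and $E_l \in \Gamma_n(B_{l-1}, B_l, C_0)$. Each such $\Gamma_n$-cell is nonempty (in fact has cardinality $2$) by condition (c) of Definition \ref{definitionAtomsGeneration-n}, and since $\Gamma_n(\cdot, B_i, \cdot) \subset \Omega_n(B_i)$ each $E_i$ sits inside $\mbox{int}(B_i)$, as required.

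It then remains to verify $E_i \rah E_{i+1}$ for every $1 \leq i \leq l-1$. By construction $E_i \in \Gamma_n(\cdot, B_i, B_{i+1})$, so condition (d) of Definition \ref{definitionAtomsGeneration-n} delivers $E_i \rah E'$ for every $E' \in \Omega_n(B_i, B_{i+1})$. On the other hand, $E_{i+1} \in \Gamma_n(B_i, B_{i+1}, \cdot) \subset \Omega_n(B_i, B_{i+1})$; indeed, by our explicit choices, the first argument of the $\Gamma_n$-cell containing $E_{i+1}$ is exactly $B_i$, including at the terminal case $i+1 = l$ where it equals $B_{l-1}$. Hence $E_i \rah E_{i+1}$, which finishes the path property.

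I do not expect any real obstacle: the construction above is essentially already embedded inside the inductive step of the proof of Lemma \ref{lemmaPathsOfAtoms}, where the intermediate assertions \eqref{eqn20}, \eqref{eqn21b} and \eqref{eqn23} assemble exactly the same list of $\rah$-relations. That overlap is presumably the reason the author's own proof merely points back to the earlier argument. The only mild subtlety is that the interior indices are all treated uniformly via the triple $(B_{i-1}, B_i, B_{i+1})$, whereas the two endpoints require the auxiliary neighbors $D_0$ and $C_0$ to supply the missing predecessor and successor labels.
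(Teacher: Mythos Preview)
Your proposal is correct and is precisely the argument the paper has in mind: the author's one-line proof just points back to Lemma \ref{lemmaPathsOfAtoms}, whose inductive step builds the $n$-atoms $E_i$ inside the given $(n-1)$-path via exactly the choice $E_i \in \Gamma_n(B_{i-1},B_i,B_{i+1})$ and then invokes condition (d), which is what you spell out. Your only addition is making explicit the auxiliary endpoint atoms $D_0$ and $C_0$, which the paper leaves implicit in the surrounding context of equations \eqref{eqn20}--\eqref{eqn23}.
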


\begin{definition}{\bf (The $\mathbf \Lambda$-set)} \em  \label{definitionLambdaSet}
Let $\Phi \in {\mathcal H} \subset C^0(D^m)$ be a model map. Let
 ${\mathcal A}_0, {\mathcal A}_1, \ldots , {\mathcal A}_n, \ldots$  be its sequence
of  families of  atoms. The subset
$$\Lambda := \bigcap_{n \geq 0} \bigcup_{A \in {\mathcal A}_n} A$$
 of ${{\rm int}}(D^m)$ is called \em the $\Lambda$-set \em of the map $\Phi$.
 \end{definition}

From Definition \ref{definitionAtomsGeneration-n}, we know that, for each fixed $n \geq 0$, the set
 $\Lambda_n :=  \bigcup_{A \in {\mathcal A}_n} A,$
is nonempty, compact, and ${{\rm int}}(\Lambda_n) \supset \Lambda_{n+1}.$ Therefore,  $\Lambda$ is  also non\-empty and compact. Moreover, $\Lambda_n$ is composed of a finite number of connected components $A \in {\mathcal A}_n$, which by Definition \ref{DefinitionModel}, satisfy
 $\lim_{n \rightarrow + \infty} \max_{A \in {\mathcal A}_n} {{\rm diam}}A = 0.$
Since  $\Lambda := \bigcap_{n \geq 0}\Lambda_n,$
we deduce that the $\Lambda$-set is \em a Cantor set \em contained in ${{\rm int}}(D^m)$.

\begin{lemma}
\label{lemmaAtomCapLambda}
Let $n,l \geq 1$ and $A_1, A_2 \in {\mathcal A}_n$. If  there exists an $l+1$-path   from $A_1$ to $A_2$, then $\Phi^l(A_1 \cap \Lambda) \cap (A_2 \cap \Lambda) \neq \emptyset$.
\end{lemma}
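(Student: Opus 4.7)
My plan is to use iterated refinement via Lemma \ref{lemmaPathsOfAtoms2} together with a nested-compactness argument. Write the given $(l+1)$-path of $n$-atoms as $(E_1^{(0)}, \ldots, E_{l+1}^{(0)})$, so that $E_1^{(0)} = A_1$ and $E_{l+1}^{(0)} = A_2$. Applying Lemma \ref{lemmaPathsOfAtoms2} inductively in $k$, I obtain for each $k \ge 1$ an $(l+1)$-path $(E_1^{(k)}, \ldots, E_{l+1}^{(k)})$ of $(n+k)$-atoms with $E_i^{(k+1)} \subset \mbox{int}(E_i^{(k)})$, arranged in the chained form visible from the proof of Lemma \ref{lemmaPathsOfAtoms} so that each intermediate child satisfies $E_i^{(k+1)} \in \Gamma_{n+k+1}(E_{i-1}^{(k)}, E_i^{(k)}, E_{i+1}^{(k)})$. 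By the model property (\ref{eqnLimitDiamAtoms=0}) the diameters tend to zero, so $\{y_i\} := \bigcap_{k \ge 0} E_i^{(k)}$ is a singleton in $\Lambda$; in particular $y_1 \in A_1 \cap \Lambda$ and $y_{l+1} \in A_2 \cap \Lambda$.

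For each $k$ I set $T^{(k)} := \bigcap_{i=0}^{l} \Phi^{-i}(E_{i+1}^{(k)}) \subset E_1^{(k)}$. Each $T^{(k)}$ is closed and the sequence is nested, $T^{(k+1)} \subset T^{(k)}$. If every $T^{(k)}$ is nonempty, then the finite intersection property for nested compacta gives $\bigcap_k T^{(k)} \ne \emptyset$, and any point $x$ in this intersection satisfies $x \in \bigcap_k E_1^{(k)} = \{y_1\}$ and $\Phi^l(x) \in \bigcap_k E_{l+1}^{(k)} = \{y_{l+1}\}$. Thus $y_{l+1} = \Phi^l(y_1) \in \Phi^l(A_1 \cap \Lambda) \cap (A_2 \cap \Lambda)$, which is the desired conclusion.

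The main obstacle is to show that each $T^{(k)} \ne \emptyset$. The bare combinatorial relation $E_i^{(k)} \rah E_{i+1}^{(k)}$ only furnishes, for each consecutive pair, some point in $E_i^{(k)}$ whose single image lies in $E_{i+1}^{(k)}$, and stitching these local witnesses into a single orbit of length $l$ needs more. My plan is to refine the path an additional $l$ times, up to generation $n+k+l$, and then apply condition d) of Definition \ref{definitionAtomsGeneration-n} level-by-level: if $E_i^{(k+j)} \in \Gamma_{n+k+j}(\cdot, E_i^{(k+j-1)}, E_{i+1}^{(k+j-1)})$ then $\Phi(E_i^{(k+j)})$ avoids every $(n+k+j)$-atom outside $\Omega_{n+k+j}(E_i^{(k+j-1)}, E_{i+1}^{(k+j-1)}) \subset E_{i+1}^{(k+j-1)}$. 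Combined with the fact that $\Lambda$ is contained in the union of atoms of every generation, an induction on $i$ should give, for any $x \in E_1^{(k+l)} \cap \Lambda$, that $\Phi^i(x) \in E_{i+1}^{(k+l-i)} \subset E_{i+1}^{(k)}$ for $0 \le i \le l$; this places $x$ in $T^{(k)}$. The delicate technical point in this induction is verifying a mild form of $\Lambda$-invariance, namely that the image of a point in $\Lambda$ continues to lie in atoms at every generation; this has to be extracted from condition d) together with the connectedness of each $\Phi(G)$ and the nesting of atoms within their parents.
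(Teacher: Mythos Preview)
Your strategy differs from the paper's and leaves the acknowledged gap unresolved. The paper's proof is considerably lighter: after refining the path via Lemma~\ref{lemmaPathsOfAtoms2} to get nested $(l+1)$-paths of $(n+k)$-atoms and defining the endpoint limits $x_1 \in A_1 \cap \Lambda$, $x_2 \in A_2 \cap \Lambda$ (just as you do), it proves $\Phi^l(x_1) = x_2$ by a direct $\varepsilon$--$\delta$ estimate. For fixed $l$, uniform continuity of $\Phi, \Phi^2, \ldots, \Phi^l$ yields $\delta > 0$ such that any tuple $(y_0,\ldots,y_l)$ with $d(\Phi(y_i),y_{i+1})<\delta$ satisfies $d(\Phi^l(y_0), y_l) < \varepsilon$; then one picks $k$ so large that all $(n+k)$-atoms have diameter $< \delta$, uses the $(l+1)$-path at that level to produce such a pseudo-orbit with $y_0,y_l$ within $\delta$ of $x_1,x_2$, and concludes $d(\Phi^l(x_1),x_2) < 3\varepsilon$. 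No nesting of the intermediate atoms, no sets $T^{(k)}$, and no appeal to $\Lambda$-invariance are needed.

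The gap in your route is genuine. Condition~d) of Definition~\ref{definitionAtomsGeneration-n} only says that $\Phi(G)$ misses the \emph{atoms} of the same generation outside $\Omega_{n+1}(B,C)$; it does not say $\Phi(G)\subset C$, and connectedness of $\Phi(G)$ alone cannot prevent $\Phi^{i+1}(x)$ from landing in the gaps between atoms or from crossing $\partial C$. What your induction actually needs is $\Phi(\Lambda)\subset\Lambda$, so that each iterate stays inside atoms of every generation and condition~d) can be invoked. In the paper this is Lemma~\ref{lemmaLambda}a), proved \emph{after} the present lemma by a separate limiting argument (that argument does not cite Lemma~\ref{lemmaAtomCapLambda}, so there is no strict circularity, but you would have to supply it first). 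Your approach can therefore be completed, but only at the cost of essentially proving Lemma~\ref{lemmaLambda}a) in advance; the paper's uniform-continuity argument sidesteps the whole issue in a few lines.
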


\begin{proof}
Assume that there exists an $(l+1)$-path from $A_1$ to $A_2$. So, from Lemma
 \ref{lemmaPathsOfAtoms2},  for all $j \geq n$ there exists atoms $B_{j,1}, B_{j,2} \in {\mathcal A}_j$ and an $(l+1)$-path from $B_{j,1}$  to $B_{j,2}$ (with constant  length $l +1$)  such that
 $$B_{n, i}= A_i, \ \ \ B_{j+1, i} \subset B_{j,i}, \quad \forall  \ j \geq  n, \ \  \ i= 1,2.$$
Construct the following two points $x_1$ and $x_2$:
$$\{x_i\}= \bigcap_{j \ge n_0}  B_{j,i}, \ \ \ \ i= 1,2.$$
By Definition \ref{definitionLambdaSet},  $x_i \in A_i \cap \Lambda.$
So, to finish the proof of Lemma \ref{lemmaAtomCapLambda} it is enough to prove  that $\Phi^l(x_1) = x_2$.

Recall that $l $ is fixed.
Since $\Phi$ is uniformly  continuous, for any $\e >0$ there exists $\delta > 0$ such that if
$(y_0, y_1, \ldots, y_{l}) \in (D^m)^l $ satisfies $d(\Phi(y_i), y_{i+1}) < \delta$ for $0 \le i \le l-1$,  then the  points $y_0$ and $y_l$ satisfy
 $d(\Phi^l(y_0), y_l) < \e$.
We choose $\delta$ small enough  such that additional  $d(\Phi^l(x), \Phi^l(y)) < \e$ if
 $d(x,y) < \delta$.

From (\ref{eqnLimitDiamAtoms=0}), there exists $j  \geq n$ such that
${{\rm diam}}(B_{j,i}) < \delta.$
Since there exists an $(l+1)$-path from $B_{j,1}$ to $B_{j,2}$, there exists a $(y_0,\dots,y_l)$ as in the previous paragraph with
 $y_0 \in B_{j,1}$ and $ y_l \in B_{j,2}.$
Thus
$$d(\Phi^l(x_1), x_2)  \leq d(\Phi^l(x_1), \Phi^l(y_0)) + d(\Phi^l(y_0), y_l) + d(y_l, x_1) $$ $$<{{\rm diam}}(\Phi^l(B_{j,1})) + \e + {{\rm diam}}(B_{j,2}) < 3 \e.$$
Since $\e>0$ is arbitrary, we obtain
  $\Phi^l(x_1) = x_2$,  as required.
\end{proof}

\begin{lemma}{\bf (Topological dynamical properties of $\mathbf \Lambda$)}
\label{lemmaLambda}
\begin{enumerate}[a)]
\item The $\Lambda$-set of a model map $\Phi \in {\mathcal H}$ is $\Phi$-invariant, i.e., $\Phi(\Lambda) = \Lambda$.

\item The map $\Phi$ restricted to the $\Lambda$-set is topologically mixing.

\item  In particular,
 $\Phi^l(A_1 \cap \Lambda) \cap (A_2 \cap   \Lambda) \neq \emptyset,$
for all $n \geq 1$, for any two atoms $A_1, A_2 \in {\mathcal A}_n$ and for all $l \geq 2n -1$.
\end{enumerate}
\end{lemma}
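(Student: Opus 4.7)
My plan is to handle the three assertions in the order (c), (b), (a), because part (c) supplies the key combinatorial input that drives both (b) and half of (a).

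Part (c) should fall out directly from the two preceding lemmas. For $l \geq 2n-1$ the path-length $l+1$ is at least $2n$, so Lemma \ref{lemmaPathsOfAtoms} provides an $(l+1)$-path of $n$-atoms from $A_1$ to $A_2$; feeding this into Lemma \ref{lemmaAtomCapLambda} yields $\Phi^l(A_1 \cap \Lambda) \cap (A_2 \cap \Lambda) \neq \emptyset$.

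For part (b) I would argue as follows. Since $\max_{A \in \mathcal{A}_n}\mathrm{diam}(A) \to 0$, the clopen traces $\{A \cap \Lambda : A \in \mathcal{A}_n,\ n \geq 1\}$ form a basis for the induced topology on $\Lambda$. Given two nonempty relatively open sets $U, V \subset \Lambda$, pick $n$ large enough and atoms $A_1, A_2 \in \mathcal{A}_n$ with $A_1 \cap \Lambda \subset U$ and $A_2 \cap \Lambda \subset V$. Part (c) then gives $\Phi^l(U) \cap V \supset \Phi^l(A_1 \cap \Lambda) \cap (A_2 \cap \Lambda) \neq \emptyset$ for every $l \geq 2n - 1$, which is exactly topological mixing on $\Lambda$.

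For part (a) I would split into two inclusions. To show $\Phi(\Lambda) \subset \Lambda$, fix $x \in \Lambda$ and $N \geq 0$. For each $k > N$, let $G^{(k)} \in \mathcal{A}_k$ be the unique $k$-atom containing $x$. Writing $G^{(k)} \in \Gamma_k(D, G^{(k-1)}, C)$ for uniquely determined $D, C \in \mathcal{A}_{k-1}$, condition (d) of Definition \ref{definitionAtomsGeneration-n} furnishes an atom $E_k \in \mathcal{A}_k$ with $\Phi(G^{(k)}) \cap \mathrm{int}(E_k) \neq \emptyset$, so I can pick $p_k \in \Phi(G^{(k)}) \cap \mathrm{int}(E_k)$. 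Uniform continuity of $\Phi$ together with $\mathrm{diam}(G^{(k)}) \to 0$ yields $\mathrm{diam}(\Phi(G^{(k)})) \to 0$, hence $p_k \to \Phi(x)$. For every $k > N$, $p_k \in E_k \subset \Lambda_k \subset \Lambda_N$, and $\Lambda_N$ is closed; therefore $\Phi(x) \in \Lambda_N$. Since $N$ was arbitrary, $\Phi(x) \in \bigcap_N \Lambda_N = \Lambda$. For the reverse inclusion $\Lambda \subset \Phi(\Lambda)$, iterating the forward inclusion gives $\Phi^l(\Lambda) \subset \Phi(\Lambda) \subset \Lambda$ for every $l \geq 1$. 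By part (c), $\Phi^l(\Lambda) \cap (A \cap \Lambda) \neq \emptyset$ for every $A \in \mathcal{A}_n$ and every $l \geq 2n-1$, so $\Phi^l(\Lambda)$ meets every basic clopen set of $\Lambda$ and is dense in $\Lambda$ for all sufficiently large $l$. Since $\Phi(\Lambda)$ is compact (hence closed) and contains a dense subset of $\Lambda$, we conclude $\Phi(\Lambda) = \Lambda$.

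The hardest step will be the forward inclusion $\Phi(\Lambda) \subset \Lambda$: condition (d) only asserts that $\Phi(G^{(k)})$ \emph{intersects} the interiors of prescribed atoms (while avoiding the remaining ones of generation $k$), not that $\Phi(G^{(k)})$ sits inside the union of those atoms, so one cannot conclude at any fixed generation that $\Phi(x)$ lies in an atom. The trick is to refuse to work at a single generation: as $k \to \infty$ the shrinking images $\Phi(G^{(k)})$ collapse onto $\{\Phi(x)\}$, so the witness points $p_k$ furnished by (d) approach $\Phi(x)$ while staying inside the decreasing closed sets $\Lambda_k \subset \Lambda_N$, and closedness of $\Lambda_N$ delivers the conclusion.
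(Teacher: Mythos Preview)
Your proposal is correct. Parts (c) and (b) match the paper's approach exactly: use Lemma~\ref{lemmaPathsOfAtoms} to produce an $(l+1)$-path (length $\geq 2n$) and feed it into Lemma~\ref{lemmaAtomCapLambda}; then observe that the atom-traces form a basis of $\Lambda$.

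The one genuine difference is in part (a), specifically the reverse inclusion $\Lambda\subset\Phi(\Lambda)$. The paper argues directly: given $y\in\Lambda$, it picks for each $n$ an atom $A_n$ with $A_n\rah B_n(y)$, chooses $x_n\in A_n$ with $\Phi(x_n)\in B_n(y)$, and passes to a limit point $x\in\Lambda$ with $\Phi(x)=y$. You instead reorder the proof, establishing (c) first (legitimately, since Lemma~\ref{lemmaAtomCapLambda} does not use invariance of $\Lambda$), and then use (c) to show that $\Phi^l(\Lambda)\subset\Phi(\Lambda)$ meets every basic clopen set of $\Lambda$; compactness of $\Phi(\Lambda)$ finishes it. Your route is slightly slicker and avoids the subsequence extraction, at the cost of a mild dependency reshuffle: strictly speaking, the statement of (b) presupposes $\Phi(\Lambda)\subset\Lambda$, so in your ordering you should insert the forward inclusion of (a) before (b). The paper's direct argument for surjectivity, on the other hand, is self-contained and would survive even in settings where the mixing estimate (c) is unavailable.

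Your forward-inclusion argument is essentially the paper's, just phrased with explicit witness points $p_k$ and closedness of $\Lambda_N$ rather than Hausdorff-distance estimates; the content is identical.
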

\begin{proof}
{a) } Let $x \in \Lambda$ and let $\{A_n(x)\}_{n \geq 0}$ the unique sequence of atoms   such that
 $x \in  A_n(x)$ and $ A_n(x) \in {\mathcal A}_n$ for all $    n \geq 0. $  Then,  $ \Phi(x) \in \Phi(A_n(x))$ for all $ n \geq 0.$
From Definition \ref{definitionAtomsGeneration-n}, for all $n \geq 0$ there exists an atom $B_n \in {\mathcal{A}_n}$ such that  $A_n(x) \rah B_n.$ Therefore $ \Phi(A_n(x)) \cap B_n \neq \emptyset$. Let $d$ denote the Hausdorff distance between subsets of $D^m$,
we deduce
$$d(\Phi(x), B_n) \leq {{\rm diam}}\big (\Phi(A_n(x))\big) + {{\rm diam}}\big(B_n\big) .$$
Moreover,  Equality (\ref{eqnLimitDiamAtoms=0}) and the continuity of $\Phi$ imply
 $$\lim_{n \rightarrow + \infty}  \max\big \{{{\rm diam}}\big(\Phi(A_n(x))\big), {{\rm diam}}\big (B_n\big )\big \} = 0.$$
Then, for all $\e >0$ there exists $n_0 \geq 0$
such that
 $d(\Phi(x), B_n) < \e  $   for some atom  $B_n \in {\mathcal A}_n  $ for all $ n \geq n_0. $
Since any atom of any generation intersects $\Lambda$, we deduce that
  $ d(\Phi(x), \Lambda)  < \e$ for each $\e >0$. Since $\Lambda$ is compact,  this implies
  $\Phi(x) \in \Lambda$. We have proved that
  $\Phi(\Lambda) \subset \Lambda.$

  Now, let us prove the other inclusion.
Let $y \in  \Lambda $   and let $\{B_n(y)\}_{n \geq 0}$ the unique sequence of atoms   such that
 $y  \in  B_n(y)$ and $ B_n(y) \in {\mathcal A}_n$ for all $n \geq 0. $
From Definition \ref{definitionAtomsGeneration-n}, for all $n \geq 0$ there exists an atom $A_n \in {\mathcal{A}_n}$ such that  $A_n \rah B_n(y).$ Therefore  $\Phi(A_n) \cap B_n(y) \neq \emptyset.$
We deduce that, for all $n \geq 0$, there exists a point  $x_n \in A_n \in {\mathcal A}_n$ such that $\Phi(x_n)  \in B_n(y)$. Since any atom $A_n$ contains points of $\Lambda$, we obtain
$$d(x_n,\Lambda) \leq {{\rm diam}}(A_n)  \hbox{ and }
d(\Phi(x_n), y) \leq \mbox{ diam} (B_n(y)), \qquad \forall \ n \geq 0.$$
Let $x$ be the limit of a convergent subsequence of $\{x_n\}_{n \geq 0}$.  Applying Equality (\ref{eqnLimitDiamAtoms=0})  and the continuity of $\Phi$, we deduce that
 $d(x,\Lambda) = 0$ and $d(\Phi(x), \, y) = 0$.
This means that $y =\Phi(x) $ and $x \in \Lambda$. We have proved that $y \in \Phi(\Lambda)$ for all $y \in \Lambda$; namely $\Lambda = \Phi(\Lambda)$, as required.

c)  We will prove a stronger assertion: for any two atoms, even of different generation, there exists $l_0 \geq 1$ such that
\begin{equation}\label{next} \Phi^l(A_1 \cap \Lambda) \cap (A_2 \cap \Lambda) \neq \emptyset \ \ \forall \ l \geq l_0.\end{equation}
It is not restrictive to assume that $A_1$  and $A_2$ are atoms of the same generation $n_0$  (if not, take $n_0$ equal to the largest of both generation and substitute $A_i$ by an atom of generation $n_0$ contained in $A_i$).
Applying Lemma \ref{lemmaPathsOfAtoms}, for all $l \geq 2n_0-1$ there exists an $(l+1)$-path from $A_1$ to $A_2$. So, from Lemma \ref{lemmaAtomCapLambda} $\Phi^l(A_1 \cap \Lambda) \cap (A_2 \cap \Lambda) \neq \emptyset$, as required.

b) The intersection  of $\Lambda$ with the  atoms of all the generations  generates its topology,  thus Equation \eqref{next}
implies that  $\Lambda$ is topologically mixing.
\end{proof}

For fixed $ (A_0, A_l) \in {\mathcal A}_n^{2}$ we set
$${\mathcal A}_n^{l+1\,*}(A_0, A_l):=\{(A_0, A_1, \ldots, A_{l-1}, A_l) \in {\mathcal A}_n^{l+1\,*}\}.$$

\begin{lemma}
\label{LemmaCardinalA_n^(l+1)}
Let $l,n \geq 1$. Then
\begin{enumerate}[a)]
\item $ \label{eqn51}\# {\mathcal A}_n^{l+1\,*} = 2^{nl} \cdot (\#{\mathcal A}_n).$
\item
$\# {\mathcal A}_n^{l+1\,*}(A_0, A_l) = \frac{2^{nl}}{\#{\mathcal A}_n} \ \ \forall \ (A_0, A_l) \in {\mathcal A}^2_n$,
for all $l \geq 2 n-1$.

\end{enumerate}
\end{lemma}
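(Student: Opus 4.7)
For assertion a), the plan is a direct combinatorial count: since the first equality in \eqref{eqn101} guarantees exactly $2^n$ outgoing transitions from each $n$-atom, the number of $(l+1)$-paths is simply $\#{\mathcal A}_n \cdot (2^n)^l = 2^{nl} \cdot \#{\mathcal A}_n$.

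Assertion b) I will prove by induction on $n \geq 1$. The base case $n=1$ is immediate from Definition \ref{definitionAtomsOfGeneration0-1}: the two $1$-atoms have all four transitions present, so fixing $A_0, A_l$ leaves the $l-1$ interior atoms completely free, giving $2^{l-1} = 2^l/\#{\mathcal A}_1$ paths for every $l \geq 1$. For the inductive step, I will establish a canonical encoding of each $(l+1)$-path $(A_0, \ldots, A_l)$ at level $n$ by an $(l+3)$-path of $(n-1)$-atoms (its \emph{shadow}) decorated by a binary choice at each position. By \eqref{eqn99} each $A_i$ lies in a unique $\Gamma_n(D_i, B_i, C_i) \in {\mathcal A}_{n-1}^{3*}$, and condition d) of Definition \ref{definitionAtomsGeneration-n} combined with the disjointness of the $\Gamma_n$-classes shows that the transitions $A_i \rah A_{i+1}$ force the overlap $(D_{i+1}, B_{i+1}) = (B_i, C_i)$. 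Setting $\mathcal{B}_{-1} := D_0$, $\mathcal{B}_i := B_i$ for $0 \leq i \leq l$ and $\mathcal{B}_{l+1} := C_l$, this produces a valid $(l+3)$-path of $(n-1)$-atoms, and conversely any shadow path together with a choice of one of the two elements of $\Gamma_n(\mathcal{B}_{i-1}, \mathcal{B}_i, \mathcal{B}_{i+1})$ at each $i = 0, 1, \ldots, l$ recovers a unique $(l+1)$-path at level $n$.

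With this encoding, fixing $A_0$ and $A_l$ pins down the first and last three entries of the shadow path as well as the two end binary choices, so the remaining freedom splits into (i) the number of ways to complete the middle of the shadow, which is an $(l-1)$-path of $(n-1)$-atoms from the fixed $\mathcal{B}_1$ to the fixed $\mathcal{B}_{l-1}$, and (ii) a factor $2^{l-1}$ for the binary choices at the interior positions. The hypothesis $l \geq 2n-1$ translates exactly to $l-2 \geq 2(n-1)-1$, so the inductive hypothesis applies and gives $2^{(n-1)(l-2)}/\#{\mathcal A}_{n-1}$ for factor (i), independent of the endpoints. Multiplying and simplifying via the identity $(n-1)(l-2) + (l-1) - (n-1)^2 = nl - n^2$ delivers the claimed value $2^{nl}/\#{\mathcal A}_n$, which is patently independent of the fixed pair.

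The main obstacle I anticipate is cleanly justifying the canonical shadow correspondence, specifically the step where condition d) of Definition \ref{definitionAtomsGeneration-n} and the disjoint partitioning in \eqref{eqn99} combine to pin down the shadow triples along the path; once this structural translation is secured, the rest of the argument is bookkeeping and straight exponent arithmetic, with the uniformity in $(A_0, A_l)$ inherited directly from the inductive hypothesis at generation $n-1$.
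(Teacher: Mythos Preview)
Your proposal is correct and follows essentially the same approach as the paper. Both arguments prove b) by induction on $n$, encoding an $(l+1)$-path of $n$-atoms via the unique $\Gamma_n$-triples containing each atom, observing that condition d) forces the consecutive triples to overlap so that the induced sequence of $(n-1)$-atoms is itself a path, and then counting by combining the inductive hypothesis (applied to the $(l-1)$-subpath of $(n-1)$-atoms from $\mathcal{B}_1$ to $\mathcal{B}_{l-1}$) with the $2^{l-1}$ free binary choices at the interior positions; your ``shadow'' language is a clean packaging of exactly the bijection the paper establishes through its set $\mathcal{C}(B_0,B_l)$.
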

\begin{proof}
a)  Each $(l+1)$-path $(A_0, A_1, \ldots, A_l)   $ of $n$-atoms is determined by a free choice of the atom $A_0 \in {\mathcal A}_{n}$, followed by the choice of the atoms $A_j \in {\mathcal A}_n$ such that $A_j \rah A_{j-1}$ for all $j =1, \ldots, l$.    From equality ii) of Definition \ref{definitionAtomsGeneration-n},  we know that for any fixed $A \in {\mathcal A}_n$ the number of atoms $B \in {\mathcal A}_n$ such that $B \rah A$ is   $2^n$. This implies  \ref{eqn51}, as required.

 b)  We argue by induction on $n$. Fix $n=1$ and $l \geq 1$. Since any two atoms $A_j, A_{j+1} \in {\mathcal A}_1$ satisfies $A_j \rah A_{j+1}$, the number of $(l+1)$-paths $$(A_0, A_1, \dots,A_j, A_{j+1}, \ldots A_{l-1}, A_l)$$ of $1$-atoms with $(A_0, A_l)$ fixed, equals $\#({\mathcal A}_1)^{l-1} = 2^{l-1}= 2^{l}/2 = 2^{nl}/(\#{\mathcal A}_n)$ with $n=1$.

Now, let us assume that assertion b) holds for some $n \geq 1$ and let us prove it for $n+1$.
Let $l \geq 2(n+1) -1 = 2n + 1 \geq 3$ and let $(B_0, B_l) \in {\mathcal A}_{n+1}^{2}$. From equality (\ref{eqn99}) and conditions a) and b)   of Definition \ref{definitionAtomsGeneration-n}, there exists unique $(A_{-1}, A_0, A_1) \in {\mathcal A}_n^{3*}$ and unique $(A_{l-1}, A_l) \in {\mathcal A}_n^{2*}$ such that
$$B_0 \in \Gamma_{n+1}(A_{-1}, A_0, A_1), \ \ \ \ B_l \in \Omega_{n+1}(A_{l-1}, A_l).$$
As $(A_1, A_{l-1}) \in {\mathcal A}_n^2$  and $l-2 \geq 2n-1$, the induction hypothesis ensures that the number of $(l-1)$-paths $(A_1, A_2, \ldots, A_{l-1})$ from $A_1$ to $A_{l-1}$ is
\begin{equation} \label{eqn113}\#{\mathcal A}_n^{l-1\,*}(A_1, A_{l-1}) = \frac{2^{n(l-2)}}{\#{\mathcal A}_n} = \frac{2^{n(l-2)}}{2^{n^2}}= 2^{n l-2n-n^2}. \end{equation}

Let ${\mathcal C}(B_0,B_l)$ be the set
$$
\hspace{-2cm}  \bigcup_{\hspace{2cm}(A_1, \ldots, A_{l-1}) \in  {\mathcal A}_n^{l-1\,*}(A_1, A_{l-1})} \hspace{-2.5cm}
\big \{(B_0, B_1, \ldots,  B_l)\in{\mathcal A}_{n+1}^{l+1} \colon
 B_j \in \Gamma_{n+1} (A_{j-1}, A_j, A_{j+1}) \ \forall j  \big \}, \nonumber
$$
where the families in the above union are pairwise disjoint.
 It is standard to  check that the families   in the union ${\mathcal C}(B_0,B_l)$  are pairwise disjoint, because for $A \neq \widetilde A$ in ${\mathcal A}_n$, the families $\Gamma_{n+1}(\cdot,A,\cdot)$ and $\Gamma_{n+1}(\cdot,\widetilde A,\cdot)$  are disjoint.

A straightforward verification shows that

\begin{equation}
\label{eqn114ToBeProved}
{\mathcal A}_{n+1}^{l+1\,*}(B_0, B_l) = {\mathcal C}(B_0,B_l).
\end{equation}

Now, applying  (\ref{eqn113}) and (\ref{eqn114ToBeProved}), we obtain

\begin{eqnarray*}
&&\hspace{-0.9cm}\#{\mathcal A}_{n+1}^{l+1\,*}(B_0, B_l) =\\
& =& \hspace{-0.8cm} \sum_{(A_1, \ldots, A_{l-1}) \in  {\mathcal A}_n^{l-1\,*}(A_1, A_{l-1})}  \hspace{-0.8cm}   \#\big  \{(B_0, B_1, \ldots,  B_l)\in{\mathcal A}_{n+1}^{l+1} \colon
 B_j \in \Gamma_{n+1} (A_{j-1}, A_j, A_{j+1}) \ \forall j  \big \}\\
& = &  \hspace{-0.8cm}
 \sum_{ (A_1, \ldots, A_{l-1}) \in  {\mathcal A}_n^{l-1\,*}(A_1, A_{l-1})}  \prod_{j=1}^{l-1}    \#\Gamma_{n+1} (A_{j-1}, A_j, A_{j+1})  \\
& = & (\#{\mathcal A}_n^{l-1\,*}(A_1, A_{l-1})) \cdot 2^{l-1} = 2^{nl-2n-n^2+l-1}= 2^{(n+1)l-(n+1)^2} =
 \frac{2^{(n+1)l}}{\#{\mathcal A}_{n+1}}
\end{eqnarray*}
as required.
\end{proof}

Let
 $\vec{A}_n^l := (A_0, A_1, \ldots, A_l)$
be an $(l+1)$-path of $n$-atoms,
and
${\mathcal F}_{n, l} (\vec{A}_n^l )  := \Big \{G \in {\mathcal A}_{n+l} \colon  G \cap \Lambda \subset \bigcap_{j=0}^l \Phi^{-j}(A_j) \Big\}.   $

\begin{lemma} {\bf (Intersection of $\mathbf \Lambda$ with $\mathbf l$-paths)}
\label{lemmaLambda2}
Fix $l,n \geq 1$. Then

\begin{enumerate}[a)]

\item  For any $G \in {\mathcal A}_{n+l}$, there exists a unique $(l+1)$-path $(A_0, A_1, \ldots, A_l)$ of $n$-atoms such that
$G \cap \Lambda \subset \bigcap_{j=0}^{l} \Phi^{-j}(A_j).$

\item For any atoms $G \in {\mathcal A}_{n+l}$, $A \in
{\mathcal A}_n$ and  $j \in \{ 0,1, \ldots, l\}$:
$$  (G \cap \Lambda) \cap \Phi^{-j}(A) \neq \emptyset \ \ \Leftrightarrow \ \ G \cap \Lambda \subset \Phi^{-j}(A).$$

\item For any  $(l+1)$-path $\vec{A}_n^l = (A_0, A_1, \ldots, A_l)$ of $n$-atoms,
 \begin{equation}
\label{eqn39b}
\Lambda \cap \bigcap_{j=0}^l \Phi^{-j}(A_j) = \bigcup_{G \in {\mathcal F}_{n, l} (\vec{A}_n^l)} G \cap \Lambda,  \end{equation}

\item For any atom $G \in {\mathcal A}_{n+l}$ and any path $\vec A_n^l \in {\mathcal A}_n^{l+1 \, *}$:\\
 $ G\in {\mathcal F}_{n,l}(\vec A_n^l) $ if and only if there exists $(G_0, G_1, \ldots, G_l) \in {\mathcal A}_{n+l}^{l+1 \, *} $ such that $G_0= G$  and  $ G_j \subset A_j $ for all  $  j= 0, 1, \ldots, l.$

\item For any $(l+1)$-path $(A_0, A_1, \ldots, A_l)$ of $n$-atoms,
   $$\displaystyle {\#{\mathcal F}_{n, l } (\vec{A}_n^l )  =\frac{1}{2^{nl}} \cdot  \frac{\#{\mathcal A}_{n+l}}{\#{\mathcal A}_{n}}}.$$
\end{enumerate}
\end{lemma}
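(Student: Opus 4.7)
My plan is to prove parts (a)--(d) directly from the nested atom structure and condition d) of Definition \ref{definitionAtomsGeneration-n}, and then to tackle part (e) --- which I expect to be the main obstacle --- by a double-counting argument on an auxiliary set of paths at level $n+l$.

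For part (a) I perform an \emph{image descent}: for each $G \in \mathcal{A}_{n+l}$, writing $G \in \Gamma_{n+l}(D,B,C)$ for its unique $(n+l-1)$-containment triple, condition d) forces $\Phi(G\cap\Lambda) \subset C$; iterating produces a unique sequence $C^G_j \in \mathcal{A}_{n+l-j}$ with $\Phi^j(G\cap\Lambda) \subset C^G_j$. Letting $A_j$ be the $n$-atom containing $C^G_j$ yields the required $(l+1)$-path, the $\rah$-relation between consecutive $A_j$'s being inherited from that of the $C^G_j$'s. Part (b) then follows from disjointness of distinct $n$-atoms; part (c) follows by assigning each $x$ in the left-hand side to the unique $(n+l)$-atom containing it and applying parts (a)--(b). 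For part (d) $\Rightarrow$, I pick any $x \in G\cap\Lambda$, let $G_j$ be the $(n+l)$-atom containing $\Phi^j(x)$, and use condition d) at level $n+l$ together with part (a) to verify that $(G_0,\ldots,G_l)$ is a valid path with $G_j \subset A_j$. For $\Leftarrow$, applying Lemma \ref{lemmaAtomCapLambda} to the subpath $(G_0,\ldots,G_j)$ gives $\Phi^j(G\cap\Lambda) \cap G_j \neq \emptyset$, and part (b) upgrades this to $\Phi^j(G\cap\Lambda) \subset A_j$.

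For part (e), I introduce the auxiliary set $\mathcal{P}(\vec A_n^l) := \{(G_0,\ldots,G_l) \in \mathcal{A}_{n+l}^{l+1\,*} : G_j \subset A_j \text{ for all } j\}$ and count it in two ways. First, by part (d) the map $(G_0,\ldots,G_l) \mapsto G_0$ sends $\mathcal{P}(\vec A_n^l)$ onto $\mathcal{F}_{n,l}(\vec A_n^l)$. An induction on $j$, using part (a), shows that for any partial path $(G_0,\ldots,G_j) \in \mathcal{P}$ the first $l-j+1$ terms of $G_j$'s descent path are $(A_j, A_{j+1}, \ldots, A_l)$; consequently the constraint ``$G_j \rah G_{j+1}$ and $G_{j+1} \subset A_{j+1}$'' is equivalent to choosing $G_{j+1} \in \Omega_{n+l}(B_j, C_j)$, where $G_j \in \Gamma_{n+l}(D_j, B_j, C_j)$ --- a set of cardinality $2^{n+l}$. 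Multiplying over the $l$ extension steps gives $|\mathcal{P}(\vec A_n^l)| = 2^{l(n+l)} \cdot \#\mathcal{F}_{n,l}(\vec A_n^l)$.

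For the second count, I compute $|\mathcal{P}(\vec A_n^l)|$ directly by iteratively refining $\vec A_n^l$ one atom-level at a time. Given an $(l+1)$-path $(C_0, \ldots, C_l)$ at level $n+m$, its refinements $(G_0, \ldots, G_l)$ at level $n+m+1$ with $G_j \subset C_j$ are obtained by choosing $G_0 \in \Gamma_{n+m+1}(D, C_0, C_1)$ for some $D$ with $D \rah C_0$ (contributing $2^{n+m} \cdot 2 = 2^{n+m+1}$ options), $G_j \in \Gamma_{n+m+1}(C_{j-1}, C_j, C_{j+1})$ for $1 \leq j \leq l-1$ (two options each), and $G_l \in \Omega_{n+m+1}(C_{l-1}, C_l)$ ($2^{n+m+1}$ options), yielding a refinement factor of $2^{2(n+m+1)+l-1}$. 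Starting from the single path $\vec A_n^l$ at level $n$ and iterating for $m = 0, \ldots, l-1$ produces
\[
|\mathcal{P}(\vec A_n^l)| = \prod_{m=0}^{l-1} 2^{2(n+m+1)+l-1} = 2^{2l(n+l)}.
\]
Combining the two counts and using $\#\mathcal{A}_k = 2^{k^2}$ from (\ref{eqn100}) gives $\#\mathcal{F}_{n,l}(\vec A_n^l) = 2^{l(n+l)} = \tfrac{1}{2^{nl}} \cdot \tfrac{\#\mathcal{A}_{n+l}}{\#\mathcal{A}_n}$, as claimed.
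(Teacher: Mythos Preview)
Your argument is correct. Parts (a)--(d) follow the same lines as the paper's proof: the image-descent construction for (a), disjointness of $n$-atoms for (b), and the use of Lemma~\ref{lemmaAtomCapLambda} together with (b) for the $\Leftarrow$ direction of (d) are exactly what the paper does.

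For part (e) you take a genuinely different route. The paper argues as follows: by (a) the families $\mathcal F_{n,l}(\vec A_n^l)$ partition $\mathcal A_{n+l}$ as $\vec A_n^l$ ranges over $\mathcal A_n^{l+1\,*}$; then, appealing to the characterisation in (d) and to the fact that all the relevant branching numbers (children per atom, $\rah$-successors, etc.) are generation-dependent constants, it asserts that $\#\mathcal F_{n,l}(\vec A_n^l)$ is independent of the path; finally it divides, using $\#\mathcal A_n^{l+1\,*}=2^{nl}\cdot\#\mathcal A_n$ from Lemma~\ref{LemmaCardinalA_n^(l+1)}(a). Your double-counting of the auxiliary set $\mathcal P(\vec A_n^l)$ is instead a direct computation: the key observation (which you prove by the ``descent-path'' induction) that once $G_0\in\mathcal F_{n,l}(\vec A_n^l)$ is fixed, \emph{every} $(n+l)$-path of length $l+1$ starting at $G_0$ automatically stays inside the prescribed $A_j$'s, makes the fibre count immediate, and the level-by-level refinement gives the total. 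Your approach avoids both the uniformity claim and Lemma~\ref{LemmaCardinalA_n^(l+1)}, at the cost of a longer explicit calculation; the paper's approach is shorter but leans on a symmetry argument that is stated rather than fully spelled out.
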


\begin{proof} a) From equalities (\ref{eqn99a}) and (\ref{eqn99}), for any atom $G$ of generation $n+l$ there exist two unique atoms $B, C$ of generation $n+ l -1$ such that  $B \rah C$,  $ G \subset B$ and $G \rah E  $ for all $ E \in \Omega_{n+l}(B,C) $. Moreover, from Remark \ref{remark S(P,Q), G_i}, we have
\begin{equation}\label{eqn40}
\Phi(G) \cap   F \neq \emptyset   \mbox{ if and only if } \ F \in  \Omega_{n+l}(B,C).\end{equation}
We claim that
\begin{equation}
\label{eqn41} \Phi(G \cap \Lambda) \subset {{\rm int}}(C).\end{equation}
 Since  $\Lambda$ is $\Phi$-invariant, for any $x \in G \cap \Lambda$, we have $\Phi(x) \in \Phi(G) \cap \Lambda$. Therefore $\Phi(x)$ is in the interior of some atom $E(x)$ of generation $n+l$  (see Definition \ref{definitionLambdaSet}). From (\ref{eqn40}), $E(x) \in \Omega_{n+l}(B,C)$. Thus  $E(x) \subset {{\rm int}}(C)$ and $\Phi(x) \in {{\rm int}}(C)$ for all $ x \in G \cap \Lambda $ proving (\ref{eqn41}).

 So, there exists $C_1 \in {\mathcal A}_{n+l-1}$ such that $\Phi(G \cap \Lambda) \subset {{\rm int}}(C_1) \cap \Lambda$. Applying the same assertion to $C_1$ instead of $G$, we deduce that there exists $C_2 \in {\mathcal A}_{n+l-2}$ such that $\Phi(C_1 \cap \Lambda) \subset {{\rm int}}(C_2) \cap \Lambda$. So, by induction, we construct atoms
 $$C_1, C_2, \ldots, C_l \ \ \mbox{ such that } \ \ C_j \in {\mathcal A}_{n+l-j} \ \mbox{ and }$$ $$ \ \Phi^j(G \cap \Lambda) \subset {{\rm int}} ( C_j) \cap \Lambda, \qquad  \forall \ j= 1, \ldots, l.$$
 Since any atom of  generation larger than $n$ is contained in a unique atom of generation $n$, there exists $A_0, A_1, \ldots, A_l \in {\mathcal A}_n$ such that $ A_0 \supset  G$ and $ A_i \supset C_i, \qquad  \forall \ i= 1, \ldots, l.$ We obtain
 $$\Phi^j (G \cap \Lambda) \subset {{\rm int}}(A_j), \qquad  \forall \ j= 0,1, \ldots, l. $$
 Besides, $(A_0, A_1, \ldots, A_l)$ is an $(l+1)$-path since
  $\emptyset \neq \Phi^j(G \cap \Lambda)  \subset   \Phi(A_{j-1}) \cap {{\rm int}}(A_j)$;   hence  $A_{j-1} \rah A_j$ for all $ j= 1, \ldots, l.$  Then,
  $G \cap \Lambda \subset   \Phi^{-j} (A_j)$ for all $ j= 0,1, \ldots, l$; proving the existence statement in a).

To prove uniqueness assume that $(A_0, A_1, \ldots, A_l)$  and $(A'_0, A'_1, \ldots, A'_l)$ are paths of $n$-atoms such that $$G \cap \Lambda \subset \Phi^{-j}(A_j) \cap \Phi^{-j}(A'_j)\ \ \forall \ j \in \{0,1, \ldots, l\}.$$
  Then $A_j \cap A'_j \neq \emptyset$ for all $j \in \{0,1, \ldots, l\}$. Since two different
 atoms of the same generation are pairwise disjoint, we deduce that $A_j= A'_j$ for all $j \in \{0,1, \ldots, l\}$ as required.

  b) Trivially, if $G \cap \Lambda \subset \Phi^{-j}(A)$, then $(G \cap \Lambda) \cap \Phi^{-j}(A) \neq \emptyset$. Now, let us prove the converse assertion.  Fix $G \in {\mathcal A}_{n+l}$ and $A \in {\mathcal A}_n$ satisfying $(G \cap \Lambda) \cap \Phi^{-j}(A) \neq \emptyset$. Applying part a) there exists $\widetilde A \in {\mathcal A}_n$ such that $G \cap \Lambda \subset \Phi^{-j}(\widetilde A)$. Therefore $G \cap \Lambda \cap \Phi^{-j}(A) \subset \Phi^{-j}(\widetilde A \cap A) \neq \emptyset$. Since $A$ and $\widetilde A$ are atoms of generation $n$, and two different atoms of the same generation are disjoint, we conclude that $\widetilde A = A$, hence $G \cap \Lambda \subset \Phi^{-j}(A)$, as required.

c)  For the $(l+1)$-path  $\vec{A}_n^l = (A_0, A_1, \ldots, A_l)$ of $n$-atoms, construct
\begin{equation}
\label{eqn112}
\widetilde {\mathcal F}_{n, l}(\vec{A}_n^l) := \big\{G \in {\mathcal A}_{n+l} \colon G \cap \Lambda \cap  \Phi^{-j}(A_j ) \neq \emptyset \ \ \forall j \in \{0,1, \ldots, l\} \big \}.\end{equation}
From the definitions of  the  families ${\mathcal F}_{n,l}$ and $\widetilde {\mathcal F}_{n,l}$, and taking into account that $\Lambda$ is contained in the union of $(n+l)$-atoms, we obtain:
$$ \bigcup_{G \in {\mathcal F}_{n,  l}(\vec{A}_n^l)} G \cap \Lambda   \  \  \subset \  \ \Lambda \cap \big(\bigcap_{j=0}^l \Phi^{-j}(A_j)\big) \  \  \subset \  \    \bigcup_{G \in \widetilde {\mathcal F}_{n,  l}(\vec{A}_n^l)} G \cap \Lambda . $$
 Therefore, to prove Equality (\ref{eqn39b}) it is enough to show that \begin{equation}
 \label{eqn112b}
 \widetilde {\mathcal F}_{n, l}(\vec{A}_n^l) = {\mathcal F}_{n, l}(\vec{A}_n^l), \end{equation}
 but this equality immediately follows  from the construction of the families  ${\mathcal F}_{n, l}(\vec{A}_n^l)$ and $\widetilde{\mathcal F}_{n, l}(\vec{A}_n^l)$ by assertion b).

 d)  For each $(l+1)$-path $\vec A_n^l = (A_0, A_1 \ldots, A_l)$ of $n$-atoms construct the  family  ${\mathcal G}_{n,l}(\vec A_n^l) := $  $$\big\{G_0 \in {\mathcal A}_{n+l}\colon \ \exists (G_0, G_1, \ldots, G_l) \in {\mathcal A}_{n+l}^{l+1 \,*} \mbox{ such that } G_j \subset A_j \ \forall j \big\} $$

We will first prove that ${\mathcal G}_{n,l}(\vec A_n^l) \supset {\mathcal F}_{n,l}(\vec A_n^l) $.
In fact, take $G  \in {\mathcal F}_{n,l}(\vec A_n^l) $, and take any point $x \in G \cap \Lambda$.
We have $\Phi^j(x) \in A_j \cap \Lambda$ for all $j \in \{0,1, \ldots, l\}$ (recall that $\Lambda$ is
$\Phi$-invariant). Since any point in $\Lambda$ is contained in the interior of some atom of any generation, there exists an atom $G_j$ of generation $n+l$ such that
$\Phi^j(x) \in {{\rm int}}(G_j)$. Recall that each atom of generation $n+l$ is contained in a unique atom of generation $n$. As $\Phi^j(x) \in G_j \cap A_j \neq \emptyset$, and different atoms
of the same generation are disjoint, we conclude that $G_j \subset A_j$. Besides $G_0= G$ because
$x \in G \cap G_0$. Finally $(G_0, G_1, \ldots, G_l)$ is a $(l+1)$-path because
$\Phi^{j+1}(x) = \Phi (\Phi^{j}(x)) \in \Phi(G_j) \cap {{\rm int}}(G_{j+1})$ for all
$j \in \{ 0, 1, \ldots, l-1\}$; namely $G_j \rah G_{j+1}$. We have proved that
$G \in {\mathcal G}_{n,l}(\vec A_n^l)$, as required.

Now, let us prove that ${\mathcal G}_{n,l}(\vec A_n^l) \subset {\mathcal F}_{n,l}(\vec A_n^l) $. Assume that $G_0 \in {\mathcal A}_{n+l}$ and $(G _0, G_1, \ldots, G_l) \in {\mathcal A}_{n+l}^{l+1\,*} $ satisfies $G_j \subset A_j$ for all $j \in \{0,1, \ldots, l\}$.  Therefore $(G_0, G_1, \ldots, G_j)$ is a $(j+1)$-path of $(n+l)$-atoms for all $j \in \{1, 2, \ldots, l\}$. Applying Lemma \ref{lemmaAtomCapLambda}, we obtain
$ G_0  \cap \Lambda \cap \Phi^{-j}( G_j )\neq \emptyset$. Therefore, taking into account that $G_j \subset A_j$, we deduce that
$$G_0  \cap \Lambda \cap \Phi^{-j}( A_j )\neq \emptyset \ \ \forall \ j \in \{0,1, \ldots, l\}.$$
Therefore $G_0 \in \widetilde {\mathcal F}_{n,l}(\vec A_n^l) = {\mathcal F}_{n,l}(\vec A_n^l)$ (recall  (\ref{eqn112}) and (\ref{eqn112b})). This holds for any $G_0 \in {\mathcal G}_{n,l}(\vec A_n^l)$, thus ${\mathcal G}_{n,l}(\vec A_n^l) \subset {\mathcal F}_{n,l}(\vec A_n^l) $,  as  required.

e)   From Assertion a)   we obtain:
\begin{equation} \label{eqn50} {\mathcal A}_{n+l} =    \bigcup_{\vec{A}_n^l\in {\mathcal A}_{n}^{l+1 \,*}} {\mathcal F}_{n, l}(\vec{A}_n^l) ,\end{equation}
where the families in the above union are pairwise disjoint, due to the uniqueness property of assertion a).

Recall the characterization of the  family ${\mathcal F}_{n, 1}(\vec{A}_n^l)$ given by Assertion d). From  Definition \ref{definitionAtomsGeneration-n}- condition a) and equality ii), the number of atoms of each generation larger than $n$ that are contained in each $A_j \in {\mathcal A}_n$, and also the number of atoms  $G_{j} \in {\mathcal A}_{n+1}$ such that $G_j \rah G_{j+1}$, are constants that  depend only on the generations but not on the chosen  atom. Therefore, there exists a constant $k_{n, l}$ such that $\#{\mathcal F}_{n, l}(\vec{A}_n^l)= \#{\mathcal G}_{n, l}(\vec{A}_n^l) =  k_{n, l}$ for all the $(l+1)$-paths of $n$-atoms. So, from Equality (\ref{eqn50})  we obtain:
$$ \#{\mathcal A}_{n+l} = (\#  {\mathcal{A}  }_{n}^{l+1\,*}) \cdot ( \# {\mathcal F}_{n,l}(\{A_j\})),$$
 and applying Lemma \ref{LemmaCardinalA_n^(l+1)}, we conclude
 $$ \#{\mathcal A}_{n+l} = 2^{nl} \cdot (\#  {\mathcal{A}  }_{n})  \cdot ( \# {\mathcal F}_{n,l}(\{A_j\})),$$
 as required.
\end{proof}

We turn to the proof of Lemma \ref{LemmaMain}. We will first construct the measure $\nu$ and then prove that it has the required properties.

We start by defining an additive pre-measure  on  the $\Lambda$-set of $\Phi$ by
$$\nu^*(A \cap \Lambda) := \frac{1}{\#{\mathcal A}_n}, \qquad  \forall \ A \in {\mathcal A}_n, \qquad \forall \ n \geq 0.$$
Since $\nu^*$ is a pre-measure defined in a family of sets that generates the Borel $\sigma$-algebra of $\Lambda$,
there exists a unique Borel probability measure $\nu$ supported on $\Lambda$ such that
\begin{equation}
\label{eqnCosntructionOfNu}
\nu (A \cap \Lambda) := \frac{1}{\#{\mathcal A}_n}, \qquad \forall \ A \in {\mathcal A}_n, \qquad \forall \ n \geq 0.\end{equation}

In the following lemmas we will prove that
 $\nu$ is $\Phi$-invariant,
 mixing, and that the metric entropy $h_{\nu}(\Phi)$   is infinite.

\begin{lemma} \label{lemmaNuInvariante}
$\nu$ is invariant by $\Phi$.
\end{lemma}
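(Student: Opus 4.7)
The strategy is to verify $\Phi$-invariance on the $\pi$-system of atoms and then invoke uniqueness of extension. Concretely, the sets $\{A\cap\Lambda : A\in\mathcal{A}_n,\ n\geq 0\}$ form a $\pi$-system that generates the Borel $\sigma$-algebra of $\Lambda$, since $\lim_{n\to\infty}\max_{A\in\mathcal{A}_n}\mathrm{diam}(A)=0$. Thus it suffices to prove
$$\nu(\Phi^{-1}(A)\cap\Lambda) \;=\; \nu(A\cap\Lambda) \;=\; \frac{1}{\#\mathcal{A}_n}$$
for every $n\geq 0$ and every $A\in\mathcal{A}_n$; the $\pi$--$\lambda$ theorem then propagates the equality to all Borel subsets of $\Lambda$.

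Fix $n\geq 0$ and $A\in\mathcal{A}_n$. The first step is to decompose $\Phi^{-1}(A)\cap\Lambda$ by recording, for each $x$ in this set, the unique $n$-atom $A_0\in\mathcal{A}_n$ containing $x$. I claim the only contributing $A_0$ are those with $A_0\stackrel{\Phi}{\to}A$: indeed, by Lemma \ref{lemmaLambda}(a) we have $\Phi(x)\in\Lambda\cap A$, and since atoms of generation $n+1$ lie in the \emph{interior} of the atoms of generation $n$ containing them (Definition \ref{definitionAtomsGeneration-n}(a)), the point $\Phi(x)$ actually lies in $\mathrm{int}(A)$, so $\Phi(A_0)\cap\mathrm{int}(A)\neq\emptyset$. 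Combined with \eqref{eqn101} (which says there are exactly $2^n$ such $A_0$), this yields the disjoint decomposition
$$\Phi^{-1}(A)\cap\Lambda \;=\; \bigsqcup_{\substack{A_0\in\mathcal{A}_n\\ A_0\stackrel{\Phi}{\to}A}}\Big(A_0\cap\Phi^{-1}(A)\cap\Lambda\Big).$$

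Now, for each such $2$-path $(A_0,A)$, Lemma \ref{lemmaLambda2}(c) with $l=1$ expresses the corresponding piece as a disjoint union over atoms of generation $n+1$:
$$A_0\cap\Phi^{-1}(A)\cap\Lambda \;=\; \bigsqcup_{G\in\mathcal{F}_{n,1}(A_0,A)} G\cap\Lambda,$$
so by the defining formula \eqref{eqnCosntructionOfNu} for $\nu$ and by the cardinality count in Lemma \ref{lemmaLambda2}(e),
$$\nu\big(A_0\cap\Phi^{-1}(A)\cap\Lambda\big) \;=\; \frac{\#\mathcal{F}_{n,1}(A_0,A)}{\#\mathcal{A}_{n+1}} \;=\; \frac{1}{2^n}\cdot\frac{\#\mathcal{A}_{n+1}}{\#\mathcal{A}_n}\cdot\frac{1}{\#\mathcal{A}_{n+1}} \;=\; \frac{1}{2^n\cdot\#\mathcal{A}_n}.$$
Summing over the $2^n$ admissible $A_0$ gives $\nu(\Phi^{-1}(A)\cap\Lambda) = 1/\#\mathcal{A}_n = \nu(A\cap\Lambda)$, as required.

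The only delicate point is the first step of the decomposition, namely showing that every preimage point sits in an atom $A_0$ with $A_0\stackrel{\Phi}{\to}A$; the argument hinges on the strict interior-nesting of successive generations of atoms, together with the invariance $\Phi(\Lambda)=\Lambda$ from Lemma \ref{lemmaLambda}(a). Once that is in place, the verification reduces to the clean counting identity of Lemma \ref{lemmaLambda2}(e), which was precisely designed so that the weights $1/\#\mathcal{A}_n$ distribute evenly among $\Phi$-preimages.
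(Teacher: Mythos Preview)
Your proof is correct and follows essentially the same strategy as the paper's: decompose $\Phi^{-1}(A)\cap\Lambda$ into pieces indexed by $(n+1)$-atoms and count. The paper carries this out directly using the families $\Gamma_{n+1}(D,B,C)$, summing over $D\rah B\rah C$ with $C=A$ to obtain $2^n\cdot 2^n\cdot 2$ pieces each of $\nu$-mass $1/\#\mathcal{A}_{n+1}$; you instead package the same count through Lemma~\ref{lemmaLambda2}(c),(e) with $l=1$, which is a legitimate shortcut since that lemma precedes this one. One small wrinkle: Lemma~\ref{lemmaLambda2} is stated only for $n\geq 1$, so strictly speaking you should handle $n=0$ separately (trivially, since then $\Phi^{-1}(A)\cap\Lambda=\Lambda$).
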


\begin{proof}
Since the atoms of all generation intersected with $\Lambda$ generates the Borel $\sigma$-algebra of $\Lambda$, it is enough to prove that
\begin{equation}
\label{eqn37ToBeProved}
\nu(C \cap \Lambda) = \nu(\Phi^{-1}(C  \cap \Lambda)), \qquad \forall \ C \in {\mathcal A}_n, \qquad \forall \ n \geq 0.\end{equation}
From  (\ref{eqn99}),   taking into account that $\Lambda$ is invariant and that any point in $\Lambda$  belongs to an atom of generation $n+1$, we obtain:

$$\Phi^{-1}(C \cap \Lambda) =  \bigcup_{\substack {B \in {\mathcal A}_n \\ B \rah C}}\ \ \ \ \bigcup_{\substack {D \in {\mathcal A}_n \\ D \rah B}} \ \ \    \bigcup_ {G \in   \Gamma_{n+1}(D,B,C)}  (G \cap \Lambda),$$
where both unions are of pairwise disjoint sets. Using equalities ii) of Definition \ref{definitionAtomsGeneration-n}, we obtain
\begin{align}
\nu (\Phi^{-1}(C \cap \Lambda)) &=& \sum_{\substack {B \in {\mathcal A}_n \\ B \rah C}}\ \ \ \ \sum_{\substack {D \in {\mathcal A}_n \\ D \rah B}} \ \ \ \sum_  {G \in   \Gamma_{n+1}(B,C,D)}  \nu (G \cap \Lambda)\nonumber \\
&=& N_C \cdot N_B \cdot  (\#\Gamma_{n+1}(B,C,D)) \cdot \frac{1}{\#{\mathcal A}_{n+1}},
\end{align}
where
 $N_X:= \#\{Y \in {\mathcal A}_n  \colon Y \rah X    \}) = 2^n$  for all $X \in {\mathcal A}_n$.   Since $           \#\Gamma_{n+1}(B,C,D)) = 2 $ (see Remark \ref{remarkAtomsGeneration-n}) and $  {\#{\mathcal A}_{n+1}}= {2^{(n+1)^2}}  $, we conclude $$\nu (\Phi^{-1}(C \cap \Lambda)) = 2^{n} \cdot 2^{n} \cdot 2 \cdot \frac{1}{2^{(n+1)^2}} = \frac{1}{2^{n^2}} = \frac{1}{\#{\mathcal A}_{n }} = \nu(C \cap \Lambda),
$$ proving Equality (\ref{eqn37ToBeProved}) as required.
\end{proof}

\begin{lemma} \label{lemmaNuMixing}
$\nu$ is mixing.
\end{lemma}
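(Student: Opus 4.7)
The plan is to reduce mixing to atoms of equal generation via a standard $\pi$-system argument, and then exploit the counting from the previous lemmas to get the mixing identity \emph{exactly}, for all sufficiently large lags, rather than merely as a limit.

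First, the collection $\mathcal{C} := \{A \cap \Lambda : A \in \mathcal{A}_n, \ n \geq 0\}$ is a $\pi$-system (any two atoms either are disjoint or the one of higher generation is contained in the one of lower generation) and generates the Borel $\sigma$-algebra of $\Lambda$. By the standard extension argument it suffices to prove $\lim_{l \to +\infty} \nu(A \cap \Phi^{-l}(B) \cap \Lambda) = \nu(A \cap \Lambda) \cdot \nu(B \cap \Lambda)$ for all $A, B \in \mathcal{C}$. Since an $m$-atom meets $\Lambda$ in a disjoint union of $(n+m)$-atom traces whenever $n \geq 0$, we may further assume $A$ and $B$ lie in the same family $\mathcal{A}_n$.

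Fix such $A, B \in \mathcal{A}_n$ and any $l \geq 2n-1$. I would decompose the set $A \cap \Phi^{-l}(B) \cap \Lambda$ by first classifying points according to their itinerary through $n$-atoms over the next $l$ steps. Precisely, Lemma~\ref{lemmaLambda2}(a) guarantees that each point in the set belongs to a unique $(n+l)$-atom $G$ whose $\Lambda$-trace is contained in $\bigcap_{j=0}^{l} \Phi^{-j}(A_j)$ for a uniquely determined $(l+1)$-path $(A_0, \dots, A_l) \in \mathcal{A}_n^{l+1\,*}$ with $A_0 = A$ and $A_l = B$. Together with Lemma~\ref{lemmaLambda2}(c) this yields a disjoint decomposition
\[
A \cap \Phi^{-l}(B) \cap \Lambda \;=\; \bigsqcup_{\vec{A} \in \mathcal{A}_n^{l+1\,*}(A,B)} \; \bigsqcup_{G \in \mathcal{F}_{n,l}(\vec{A})} (G \cap \Lambda).
\]

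Now I would plug in the two cardinality formulas already established: Lemma~\ref{LemmaCardinalA_n^(l+1)}(b) gives $\#\mathcal{A}_n^{l+1\,*}(A,B) = 2^{nl}/\#\mathcal{A}_n$ (this is where we need $l \geq 2n-1$), while Lemma~\ref{lemmaLambda2}(e) gives $\#\mathcal{F}_{n,l}(\vec{A}) = \#\mathcal{A}_{n+l} / (2^{nl} \#\mathcal{A}_n)$. Since $\nu(G \cap \Lambda) = 1/\#\mathcal{A}_{n+l}$ for every $G \in \mathcal{A}_{n+l}$ by construction \eqref{eqnCosntructionOfNu}, summing gives
\[
\nu\bigl(A \cap \Phi^{-l}(B) \cap \Lambda\bigr) \;=\; \frac{2^{nl}}{\#\mathcal{A}_n} \cdot \frac{\#\mathcal{A}_{n+l}}{2^{nl}\,\#\mathcal{A}_n} \cdot \frac{1}{\#\mathcal{A}_{n+l}} \;=\; \frac{1}{(\#\mathcal{A}_n)^2} \;=\; \nu(A \cap \Lambda)\cdot \nu(B \cap \Lambda),
\]
which is the mixing identity, and in fact holds as an equality for every $l \geq 2n-1$. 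The only delicate point is book-keeping: verifying that the two-level union over paths and over atoms in $\mathcal{F}_{n,l}(\vec{A})$ is genuinely disjoint and exhausts the set in question, but both facts are already embedded in Lemma~\ref{lemmaLambda2}. With mixing proved on $\mathcal{C}$, the standard $\pi$-$\lambda$ extension concludes the argument.
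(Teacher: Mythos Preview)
Your proof is correct and follows essentially the same approach as the paper: both reduce to atoms of the same generation $n$, decompose $A \cap \Phi^{-l}(B) \cap \Lambda$ over $(l+1)$-paths in $\mathcal{A}_n^{l+1\,*}(A,B)$ and then over the $(n+l)$-atoms in $\mathcal{F}_{n,l}$, and combine the counts from Lemma~\ref{LemmaCardinalA_n^(l+1)}(b) and Lemma~\ref{lemmaLambda2}(e) with $\nu(G\cap\Lambda)=1/\#\mathcal{A}_{n+l}$ to obtain the exact identity for all $l\geq 2n-1$. The only organizational difference is that the paper first treats same-generation atoms and afterwards reduces the mixed-generation case by writing each atom as a disjoint union of $n$-atoms, whereas you do this reduction upfront and invoke the $\pi$-system extension explicitly.
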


\begin{proof}
The family of atoms of all generations intersected with $\Lambda$ generates the Borel $\sigma$-algebra of $\Lambda$, thus it is enough to prove that for any pair $(C_0, D_0) $ of atoms (of equal or different generations) there exists $l_0 \geq 1$ such that
\begin{equation}
\label{eqn200ToBeProved}\nu(\Phi^{-l}(D_0 \cap \Lambda) \cap (C_0\cap \Lambda))= \nu(C_0 \cap \Lambda) \cdot \nu(D_0 \cap \Lambda) \ \ \forall \ l \geq l_0.\end{equation}

\vspace{.3cm}

Let us first prove this in the case that $C_0$ and $D_0$  are atoms of the same generation $n$. Take $l \geq 2n-1$. Applying  Lemma \ref{lemmaLambda}-c), we have
$\Phi^{-l}(D_0 \cap \Lambda) \cap (C_0\cap \Lambda) \neq \emptyset \ \ \forall \ l \geq 2n-1. $

Fix $l \geq 2n-1$. We will use the notation
$$\vec A^l_n :=(C_0, A_1, \ldots, A_{l-1}, D_0) \in  {\mathcal A}_n^{l+1\,*}(C_0, D_0)$$
to denote any one of the
$2^{nl}/(\#{\mathcal A}_{n})$ different $l+1$-paths of $n$-atoms from $C_0$ to $D_0$
(see  Lemma \ref{LemmaCardinalA_n^(l+1)}-b)).

We assert that
\begin{equation}
\label{eqn201ToBeProved}
  \Phi^{-l} (D_0 \cap \Lambda) \cap (C_0 \cap \Lambda) = T := \bigcup_{\vec A^l_n \in {\mathcal A}_n^{l+1\,*}(C_0, D_0)} \ \  \bigcup_{B \in {\mathcal F}_{n,l}(\vec A_n^l)} (B \cap \Lambda), \end{equation}
where the family ${\mathcal F}_{n,l}(\vec A_n^l)$ of $(n+l)$-atoms is defined in  Lemma \ref{lemmaLambda2}-c).

First, let us prove that
$\Phi^{-l} (D_0 \cap \Lambda) \cap (C_0 \cap \Lambda) \subset T $. Fix $x \in  (D_0 \cap \Lambda) \cap (C_0 \cap \Lambda) $. Then
$C_0, D_0$ are the unique atoms of generation $n$ that contain  $x$ and $\Phi^l(x) \in \Phi^l(\Lambda) = \Lambda$ respectively. Since
$x \in \Lambda$, there exists a unique atom $B$ of generation $n+l$ that contains $x$. Applying  Lemma \ref{lemmaLambda2}-a) there
exists a unique $(A_0, A_1, \ldots, A_l)\in{\mathcal A}_n^{l+1 \,*}  $ such that $B \cap \Lambda\subset \Phi^{-j}(A_j)$ for all $j\in \{0,1,\ldots, l\}$. Since the $n$-atom  that contains $x$ is $C_0$, and two different $n$-atoms   are disjoint, we deduce that $A_0= C_0$. Analogously, since the $n$-atom that contains $\Phi^l(x)$ is $D_0$ and the preimages of two different $n$-atoms are disjoint, we deduce that $A_l = D_0$.
Therefore we have found $\vec A_n^l= (C_0, A_1, \ldots, A_{l-1}, D_0)$ and $B \in {\mathcal F}_{n,l}(\vec A_n^l)$ such that $x \in B \cap \Lambda$. In other words, $x \in T$, as required.

Next, let us prove that
$\Phi^{-l} (D_0 \cap \Lambda) \cap (C_0 \cap \Lambda) \supset T $. Take $B \in {\mathcal F}_{n,l} (\vec A_n^l)$ for some $\vec A_n^l = (C_0, A_1, \ldots, A_{l-1}, D_0)$. From the definition of the family ${\mathcal F}_{n,l} (\vec A_n^l)$ in Lemma \ref{lemmaLambda2}-c), we have
 $B \cap \Lambda \subset (C_0 \cap \Lambda) \cap  \Phi^{-l}(D_0)$. Besides $B \cap \Lambda \in \Phi^{l}(\Lambda)$ because $\Phi^l(\Lambda)= \Lambda$. We conclude that
 $B \cap \Lambda \subset (C_0 \cap \Lambda) \cap   \Phi^{-l}(D_0 \cap \Lambda)$, proving that
 $T \subset \Phi^{-l} (D_0 \cap \Lambda) \cap (C_0 \cap \Lambda) $, as required.
 This ends the proof of equality (\ref{eqn201ToBeProved}).

By definition $n$-atoms are pairwise disjoint, thus the sets in the union constructing $T$ are pairwise disjoint.  Therefore, from  (\ref{eqn201ToBeProved}), and applying Lemma \ref{LemmaCardinalA_n^(l+1)}-b) and  Lemma \ref{lemmaLambda2}-e), we deduce

 \begin{align*}
 \nu ((C_0 \cap \Lambda) \cap \Phi^{-l}(D_0 \cap \Lambda)) & =
   \sum_{\vec A_n^l \in {\mathcal A}_n^{l+1\,*}(C_0, D_0)} \ \ \  \sum_{B \in {\mathcal F}_{n,l}(\vec A_n^l)} \nu(B \cap \Lambda)\\
& =  (\# {\mathcal A}_n^{l+1\,*}(C_0, D_0)) \cdot (\# {\mathcal F}_{n,l}(\vec A_n^l)) \cdot \frac{1}{\#{\mathcal A}_{n+l}}
\end{align*}
\begin{align*}
& =\frac{2^{nl}}{\#{\mathcal A}_n} \cdot \frac{1}{2^{nl}} \cdot \frac{\#{\mathcal A}_{n+l}}{\#{\mathcal A}_{n}} \cdot \frac{1}{\#{\mathcal A}_{n+l}}= \frac{1}{\#{\mathcal A}_{n}} \cdot \frac{1}{\#{\mathcal A}_{n}} \\
& = \nu(C_0 \cap \Lambda) \cdot \nu(D_0 \cap \Lambda).
 \end{align*}
This ends the proof of equality (\ref{eqn200ToBeProved}) in the case that $C_0$ and $D_0$ are atoms of the same generation $n$, taking $l_0= 2n-1$.

\vspace{.3cm}

Now, let us prove equality (\ref{eqn200ToBeProved}) when $C_0$ and $D_0$  are atoms of different generations. Let $n$ equal the
maximum of both generations. Take $l \geq 2n-1$. Since $\Lambda$ is contained in the union of the atoms of any generation, we have
$$C_0 \cap \Lambda = \bigcup_{C \in {\mathcal A}_n, C \subset C_0} C \cap \Lambda, $$ where the sets in the union are pairwise
disjoint.
Analogously $$\Phi^{-l}(D_0 \cap \Lambda)  = \bigcup_{D \in {\mathcal A}_n, D \subset D_0} \Phi^{-l}(D \cap \Lambda),$$ where also the
sets in this union  are pairwise disjoint.
So,
\begin{align*}
(C_0 \cap \Lambda)\cap \Phi^{-l}(D_0 \cap \Lambda) = \bigcup_{C \in {\mathcal A}_n, C \subset C_0}  \  \bigcup_{D \in {\mathcal A}_n, C \subset D_0}\ \hspace{-0.5cm} (C \cap \Lambda) \cap \Phi^{-l}(D \cap \Lambda).
\end{align*}
Since the sets in the union are pairwise disjoint, we deduce
$$\nu((C_0 \cap \Lambda)\cap \Phi^{-l}(D_0 \cap \Lambda)) = \sum_{C \in {\mathcal A}_n, C \subset C_0} \  \sum_{D \in {\mathcal A}_n, C \subset D_0} \hspace{-0.3cm}  \nu((C \cap \Lambda) \cap \Phi^{-l}(D \cap \Lambda)).$$

As $C,D$ are atoms of the same generation $n$, and $l \geq 2n-1$, we can apply the first case proved above, to deduce that
\begin{equation}
\label{eqn202}
\nu((C_0 \cap \Lambda)\cap \Phi^{-l}(D_0 \cap \Lambda)) = $$  $$ \#\{C \in {\mathcal A}_n, C \subset C_0\} \cdot \#\{D \in {\mathcal A}_n, C \subset D_0\}\cdot \frac{1}{(\#{\mathcal A}_n)^2}. \end{equation}
The number of atoms of generation $n$ contained in an atom $C_0$ of generation $n_1$ larger or equal than $n$, does not depend of the chosen atom $C_0$. Therefore,
$$\#\{C \in {\mathcal A}_n, C \subset C_0\}= \frac{\#{\mathcal A}_n}{\#{\mathcal A}_{n_1}}=(\#{\mathcal A}_{n }) \cdot \nu (C_0 \cap \Lambda). $$
Analogously
$$\#\{D \in {\mathcal A}_n, D \subset D_0\}=  (\#{\mathcal A}_{n }) \cdot \nu (D_0 \cap \Lambda). $$
Finally, substituting in equality (\ref{eqn202}) we conclude that\\
$\displaystyle \nu(\Phi^{-l}(D_0 \cap \Lambda) \cap (C_0\cap \Lambda))= \nu(C_0 \cap \Lambda) \cdot \nu(D_0 \cap \Lambda) \ \ \forall \ l \geq 2n-1.$
\end{proof}

\begin{lemma} \label{lemmaNuEntropyInfty}
$h_\nu(\Phi) = +\infty$.
\end{lemma}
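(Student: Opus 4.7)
The plan is to exhibit, for each $n\ge 1$, a finite measurable partition $\xi_n$ of $\Lambda$ whose Kolmogorov--Sinai entropy with respect to $\Phi$ is exactly $n\log 2$; since $h_\nu(\Phi)\ge h_\nu(\Phi,\xi_n)$ for every $n$, letting $n\to\infty$ will give $h_\nu(\Phi)=+\infty$.

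Concretely, for each $n\ge 1$ let
$$\xi_n := \bigl\{\,A\cap\Lambda \ :\ A\in\mathcal{A}_n\,\bigr\}.$$
Because the atoms of generation $n$ are pairwise disjoint and $\Lambda\subset\bigcup_{A\in\mathcal{A}_n}A$, this is a finite Borel partition of $\Lambda$ with $\#\xi_n = \#\mathcal{A}_n = 2^{n^2}$ cells. I will then identify the cells of the refinement $\xi_n^{(l)} := \bigvee_{j=0}^{l-1}\Phi^{-j}\xi_n$ using the combinatorics of paths already established. By Lemma \ref{lemmaLambda2}(a)--(c), the nonempty cells of $\xi_n^{(l)}$ are exactly the sets
$$\Lambda\cap\bigcap_{j=0}^{l-1}\Phi^{-j}(A_j)=\bigcup_{G\in \mathcal F_{n,l-1}(\vec A_n^{l-1})} G\cap\Lambda,$$
indexed by the $l$-paths $\vec A_n^{l-1}=(A_0,\dots,A_{l-1})\in\mathcal{A}_n^{l*}$. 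By Lemma \ref{LemmaCardinalA_n^(l+1)}(a) the number of such paths is $2^{n(l-1)}\cdot\#\mathcal{A}_n$, and by Lemma \ref{lemmaLambda2}(e) each cell is the union of exactly $\#\mathcal F_{n,l-1}=\frac{1}{2^{n(l-1)}}\cdot\frac{\#\mathcal{A}_{n+l-1}}{\#\mathcal{A}_n}$ atoms of generation $n+l-1$, each of $\nu$-mass $1/\#\mathcal{A}_{n+l-1}$ by the definition \eqref{eqnCosntructionOfNu} of $\nu$.

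Consequently every nonempty cell of $\xi_n^{(l)}$ has the same measure
$$\nu\Bigl(\Lambda\cap\bigcap_{j=0}^{l-1}\Phi^{-j}(A_j)\Bigr)=\frac{1}{2^{n(l-1)}\cdot\#\mathcal{A}_n}=\frac{1}{2^{n(l-1)+n^2}},$$
and the refinement $\xi_n^{(l)}$ partitions $\Lambda$ into exactly $2^{n(l-1)+n^2}$ such equal-mass cells. Therefore
$$H_\nu(\xi_n^{(l)})=\bigl(n(l-1)+n^2\bigr)\log 2.$$
Dividing by $l$ and sending $l\to\infty$ yields
$$h_\nu(\Phi,\xi_n)=\lim_{l\to\infty}\frac{H_\nu(\xi_n^{(l)})}{l}=n\log 2,$$
and hence $h_\nu(\Phi)\ge h_\nu(\Phi,\xi_n)=n\log 2$ for every $n\ge 1$, so $h_\nu(\Phi)=+\infty$.

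The main obstacle is only bookkeeping: one must be sure that the atoms of generation $n$ really do give a measurable partition of $\Lambda$ (trivial, from Definition \ref{definitionLambdaSet}), and that the equal-mass count comes out as claimed, which is where Lemma \ref{LemmaCardinalA_n^(l+1)} and Lemma \ref{lemmaLambda2}(e) do all of the work. No additional estimate is needed beyond these combinatorial identities and the defining formula $\nu(A\cap\Lambda)=1/\#\mathcal{A}_n=2^{-n^2}$; the linear growth of $h_\nu(\Phi,\xi_n)$ in $n$ is exactly the mechanism forcing infinite entropy.
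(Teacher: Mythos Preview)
Your proof is correct and follows essentially the same approach as the paper's own proof: both use the partition of $\Lambda$ by $n$-atoms, identify the cells of the $l$-fold refinement with $l$-paths via Lemma~\ref{lemmaLambda2}, compute each cell's $\nu$-measure through Lemma~\ref{lemmaLambda2}(e), and obtain $h_\nu(\Phi,\xi_n)=n\log 2$. The only cosmetic difference is your indexing convention ($\bigvee_{j=0}^{l-1}$ versus the paper's $\bigvee_{j=0}^{l}$), which is immaterial to the argument.
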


 \begin{proof}
For $n \geq 1$ we consider the   partition $\mathcal A_n$ of $\Lambda $ consisting of all the $n$-atoms intersected with $\Lambda$.  By the definition of metric entropy

\begin{equation} \label{eqn52a}h_\nu (\Phi) := \sup_{\mathcal P} h({\mathcal P}, \nu) \geq h({\mathcal A}_n, \nu) , \mbox{ where }\end{equation}

\begin{equation} \label{eqn52b}h({\mathcal A}_n, \nu):= \lim_{l \rightarrow + \infty} \frac{1}{l} H\Big (\bigvee_{j=0}^l  (\Phi^{-j}{\mathcal A}_n), \nu\Big), \end{equation}
$$ {\mathcal Q}_l:= \bigvee_{j=0}^l  \Phi^{-j}{\mathcal A }_n : =\Big \{\bigcap_{j=0}^l \Phi^{-j}A_j \cap \Lambda \neq \emptyset: \ \ \ A_j \in {\mathcal A}_n \Big\}, $$

\begin{equation} \label{eqn52} H ({\mathcal Q}_l, \nu) := - \sum_{X \in  {\mathcal Q}_l} \nu(X) \log \nu(X). \end{equation}
For any nonempty
 $X := \Lambda \cap  \Big(\bigcap_{j=0}^l \Phi^{-j}A_j \Big)  \in \nolinebreak {\mathcal Q}_l ,$    Lemma \ref{lemmaLambda2}-c)
 yields
$$\nu(X) = \nu \Big( \bigcap_{j=0}^l \Phi^{-j}A_j \cap \Lambda    \Big) = \sum_{G \in {\mathcal F}_{n, l}(\vec A_j^l)}  \nu(G \cap \Lambda).$$
Since $G   $ is an atom of generation $n+l$,  we have $\nu(G \cap \Lambda) = {1}/( \#{\mathcal A}_{n+l})$, thus applying Lemma \ref{lemmaLambda2}-e), yields
$$\nu(X) =   \frac{ \#{\mathcal F}_{n, l}(\{A_j\})}{\#{\mathcal A}_{n+l}} = \frac{1}{2^{ nl } \cdot \#{\mathcal A}_{n}}. $$
Combining this with (\ref{eqn52}) yields  $H ({\mathcal Q}_l) =  \log (\#{\mathcal A}_n) + nl \cdot \log 2. $
Finally, substituting in   Equality  (\ref{eqn52b}), we conclude

$$ h({\mathcal A}_n, \nu):=  \lim_{l \rightarrow + \infty} \frac{1}{l} H\Big ( {\mathcal Q}_l, \nu\Big) = n \log 2.$$

Combining with  \eqref{eqn52a} yields
$h_{\nu}(\Phi)\geq n \log 2$, for all $n \geq 1$; hence $h_{\nu}(\Phi)= + \infty.$
 \end{proof}

\begin{proof}[Proof of Lemma \ref{LemmaMain}]
As proved in Lemmas \ref{lemmaNuInvariante}, \ref{lemmaNuMixing} and \ref{lemmaNuEntropyInfty}, the probability measure $\nu$ constructed by equality (\ref{eqnCosntructionOfNu}) is $\Phi$-invariant, mixing and has infinite metric entropy, as required.
\end{proof}

\section{Periodic Shrinking Boxes}   \label{sectionPeriodicShrinkingBoxes}

In this section we will prove Theorems \ref{Theorem1} and \ref{Theorem3} for  $m \ge 2$. The proofs are based  on the properties of the models proved in the previous sections, and on the existence of the periodic shrinking boxes which we construct here.

Throughout this section we consider $m \ge 1$, unless the condition $m \ge 2$ is explicitly stated.

\begin{definition} \em \label{DefinitionPerShrBox}
{\bf (Periodic shrinking box)}
Let $f \in C^0(M)$ and $K \subset M$ be a box.
We call  $K $   \em periodic shrinking with period $p \geq 1$ \em for $f$, if
 $K, \ f(K), \ f^2(K), \ \ldots, f^{p-1}(K)$   are pairwise disjoint, and
 $f^p(K) \subset {{\rm int}}(K)$.
 If so, we call
  $f^p|_{K} : K \rightarrow \mbox{ int} (K)$ the \em return map. \em
\end{definition}

Recall that the manifold $M$ is compact. This assumption is important to obtain the following Lemmas \ref{Lemma1} and  \ref{Lemma1bis}.   We will construct periodic shrinking boxes whose return maps are homeomorphisms onto their images. Although this latter condition is unnecessary for the construction of the periodic shrinking boxes, it will be used later in the proofs of Lemmas \ref{Lemma2} and \ref{Lemma2b} where the return maps must be topologically conjugated to model maps.

\begin{lemma}
\label{Lemma1} For any $\delta>0$, there exists an open and dense set of maps $f \in C^0(M)$ that have a periodic shrinking box $K_f$ with ${{\rm diam}}(K_f)< \delta$.  For a dense set of $f \in C^0(M)$ the return map to $K_f$ is one to one.
\end{lemma}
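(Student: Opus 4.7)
The plan splits into verifying openness, density for the periodic shrinking box, and density for the return map being a homeomorphism.

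\emph{Openness.} The defining conditions of a periodic shrinking box of period $p$ and diameter less than $\delta$---namely pairwise disjointness of $K, f(K), \ldots, f^{p-1}(K)$ and the strict inclusion $f^p(K) \subset \mbox{int}(K)$---are open in the $C^0$-topology when $K$ is held fixed, because each $f^i(K)$ is compact, varies continuously with $f$ in the Hausdorff metric, and strict containment of a compact set in an open set is preserved under small uniform perturbations. Hence the same $K$ witnesses the property for every $g$ in a $C^0$-neighborhood of $f$.

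\emph{Density for the shrinking box.} Given $f$ and $\e > 0$, I would build $g$ with $\|g - f\|_{C^0} < \e$ having an attracting periodic orbit of small basin. First, extract an approximate periodic orbit: by compactness of $M$, for any $\eta > 0$ the forward orbit of a point $x_0$ has two iterates $f^i(x_0), f^j(x_0)$ with $i < j$ and mutual distance $< \eta$; taking the pair with minimal $j - i$ yields $y := f^i(x_0)$ and $p := j - i$ with $d(f^p(y), y) < \eta$ and, by minimality, the segment $y, f(y), \ldots, f^{p-1}(y)$ pairwise $\eta$-separated. Next, in a small neighborhood $V$ of $f^{p-1}(y)$ (disjoint from the other orbit points), I would modify $f$ continuously so that $g(f^{p-1}(y)) = y$ with $g = f$ outside $V$; this produces an exact $p$-periodic orbit at a $C^0$-cost of order $\eta$. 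Finally, choosing $V$ small enough (using uniform continuity of $f$), I can pick a closed box $K$ around $y$ with $\mbox{diam}(K) < \delta$ such that $g$ sends $V$ strictly into $\mbox{int}(K)$; then $g^i(K) \subset W_i$ for pairwise disjoint small boxes $W_i$ around $f^i(y)$, and $g^p(K) = g(g^{p-1}(K)) \subset g(V) \subset \mbox{int}(K)$, so $K$ is a periodic shrinking box of diameter less than $\delta$.

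\emph{Return map as a homeomorphism.} After $g$ is constructed, a further $C^0$-small perturbation supported inside each iterate $K, g(K), \ldots, g^{p-1}(K)$ replaces every restriction $g|_{g^i(K)}$ by a homeomorphism onto its image, using that continuous maps between $m$-balls are $C^0$-approximable by embeddings when $m \ge 1$. The resulting return map $g^p|_K$, being a composition of homeomorphisms onto their images, is itself a homeomorphism onto its image; since such further perturbations are available arbitrarily close to $g$, this yields a dense set.

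\emph{Main obstacle.} The chief technical difficulty is the quantitative balancing in the density step: the perturbation inside $V$ must simultaneously be $C^0$-small (bounded by $\e$), localized enough to preserve disjointness of the orbit neighborhoods ($V \subset W_{p-1}$ disjoint from the other $W_i$), and strong enough to strictly attract $V$ into $\mbox{int}(K)$ with $\mbox{diam}(K) < \delta$. These competing constraints are reconciled by choosing $\eta$ much smaller than $\min(\e,\delta)$ and exploiting the uniform continuity of $f$ on compact $M$ to make the diameters of the images $f(V)$ and $f(W_i)$ arbitrarily small.
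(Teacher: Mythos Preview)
Your openness argument is correct and matches the paper. The density argument, however, has a real gap at the step where you pass from a periodic \emph{point} to a periodic \emph{shrinking box}. After the perturbation you need simultaneously $g(V)\subset\mbox{int}(K)$ and $g^{p-1}(K)\subset V$. Since $g=f$ on $\partial V$, the first condition forces $K$ to contain $f(\partial V)$, so roughly $\mbox{diam}(K)\gtrsim\omega_f(\mbox{diam}(V))$; the second forces $\omega_{f^{p-1}}(\mbox{diam}(K))\lesssim\mbox{diam}(V)$. When $f$ expands along the orbit (say locally by a factor $L>1$), these combine to $L^{p}\lesssim 1$, which is impossible. Uniform continuity only bounds how \emph{small} images can be made, not how large they become under iteration, so ``take $\eta\ll\min(\e,\delta)$ and use uniform continuity'' does not resolve the competing constraints you correctly identified.

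The paper sidesteps this by perturbing at the \emph{source} rather than the \emph{target}: it writes $g=f\circ\psi$ on a box $B\ni x_0$, where $\psi\in\mbox{Hom}(B)$ fixes $\partial B$ and sends $K$ onto a much smaller box $U\subset\mbox{int}(K)$. Then $g^j(K)=f^j(U)$ for $1\le j\le p$, so the $g$-orbit of $K$ equals the $f$-orbit of the tiny $U$, and since $U$ was chosen with $f^p(U)\subset\mbox{int}(K)$ one gets the shrinking property regardless of whether $f$ expands. (Your pigeonhole extraction of an $\eta$-pseudo-orbit is a perfectly good substitute for the paper's use of Poincar\'e recurrence; the issue is only the subsequent contraction.)

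A secondary problem: the claim that continuous self-maps of $D^m$ are $C^0$-approximable by embeddings is false (the tent map on $[0,1]$ is not a $C^0$-limit of monotone maps), and in any case a perturbation supported inside $g^i(K)$ must agree with $g$ on $\partial g^i(K)$, where $g$ need not be injective. The paper handles this differently: it first perturbs $f$ near each orbit point $f^j(x_0)$ to be a local homeomorphism there (Lemma~\ref{Lemma1bis}), and only afterwards chooses $U$ small enough that $f^j(U)$ lies in these good neighborhoods; then the return map is automatically a composition of homeomorphisms.
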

The proof of this lemma  uses the following  technical result.

\begin{lemma}
\label{Lemma1bis}
Let $f \in C^0(M)$ and $x_0 \in M$. For all $\e>0$, there exists $g \in C^0(M)$ and a neighborhood $H$ of $x_0$ such that $\|g-f\|_{C^0} < \e$, $g|_H$ is a homeomorphism  onto its image, and coincides with $f$ off a  neighborhood of $x_0$.
\end{lemma}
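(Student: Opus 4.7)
The plan is to perturb $f$ only inside a small coordinate neighborhood of $x_0$, replacing it there with a suitably scaled coordinate map (which is automatically injective), and then blending it with $f$ via a cutoff using the convex structure of a chart around $f(x_0)$.

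First I would fix coordinate charts $\phi\colon U_1 \to \R^m$ centered at $x_0$ (so $\phi(x_0)=0$) and $\psi\colon U_2 \to \R^m$ centered at $f(x_0)$ (so $\psi(f(x_0))=0$), chosen so that $\psi(U_2)$ is a convex open set (for instance, an open ball). Using continuity of $f$ at $x_0$ I would then pick $r>0$ so small that $V := \phi^{-1}(\overline{B(0,r)}) \subset U_1$, $f(V) \subset U_2$, and $\mbox{dist}(f(x),f(x_0)) < \varepsilon/3$ for every $x \in V$. Next set $H := \phi^{-1}(\overline{B(0,r/2)}) \subset \mbox{int}(V)$, fix a continuous cutoff $\alpha\colon M \to [0,1]$ equal to $1$ on $H$ and to $0$ on $M \setminus \mbox{int}(V)$, and choose $c>0$ small enough that $c\cdot\overline{B(0,r)} \subset \psi(U_2)$ and $\mbox{dist}(\psi^{-1}(cy),f(x_0)) < \varepsilon/3$ for all $y \in \overline{B(0,r)}$.

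Then I would define $g(x):=f(x)$ for $x \notin V$, and for $x \in V$
\begin{equation*}
g(x) := \psi^{-1}\Big(\alpha(x)\,c\,\phi(x) + (1-\alpha(x))\,\psi(f(x))\Big).
\end{equation*}
This is well defined by the convexity of $\psi(U_2)$, and it agrees with $f$ on $\partial V$ (where $\alpha$ vanishes), so the two prescriptions glue to a continuous map $g\in C^0(M)$ that coincides with $f$ outside $V$. On $H$ one has $\alpha\equiv 1$, so $g|_H = \psi^{-1}\circ(c\cdot\mathrm{id})\circ\phi|_H$ is a composition of homeomorphisms onto their images and is therefore itself a homeomorphism of $H$ onto its image. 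The triangle inequality together with the choices of $r$ and $c$ gives $\mbox{dist}(g(x),f(x)) \le \mbox{dist}(g(x),f(x_0)) + \mbox{dist}(f(x_0),f(x)) < \varepsilon$ for $x \in V$, and the distance is zero outside $V$; hence $\|g-f\|_{C^0}<\varepsilon$. Since $r$ may be chosen arbitrarily small, $V$ can be made of arbitrarily small diameter, as required by the last clause of the lemma.

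The main (mild) obstacle is simply bookkeeping: one must simultaneously arrange the blending formula to land inside the convex target chart, keep $c$ small enough that $g|_H$ stays close to $f(x_0)$, and shrink $V$ enough that $f$ itself varies little on $V$. Fixing the target chart first, then choosing $V$ by continuity of $f$, then choosing $c$, resolves these dependencies in the correct order.
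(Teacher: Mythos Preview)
Your argument is essentially correct and takes a genuinely different route from the paper. The paper builds $g$ by an explicit radial (cone) interpolation: it fixes nested boxes $H\subset K$ around $x_0$, chooses a homeomorphism $\chi:H\to H'$ into a small box $H'$ near $f(x_0)$, and then extends continuously across the annulus $K\setminus H$ by sending each radial segment (in a cube chart) affinely to the segment joining the corresponding boundary values of $f$ and of $\chi$. Your version replaces this hand-built cone by a cutoff-function blend in the target chart, which is the standard differential-topology device and arguably cleaner; the paper's construction, on the other hand, is completely explicit and avoids any appeal to convexity of the target chart.

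One small imprecision: your final estimate ``$\mbox{dist}(g(x),f(x_0))+\mbox{dist}(f(x_0),f(x))<\varepsilon$'' does not quite follow from the stated choices. You have bounded $\mbox{dist}(\psi^{-1}(c\phi(x)),f(x_0))<\varepsilon/3$ and $\mbox{dist}(f(x),f(x_0))<\varepsilon/3$, but $g(x)=\psi^{-1}$ of a convex combination in chart coordinates, and $\psi^{-1}$ need not send Euclidean segments into metric $\varepsilon/3$-balls. The fix is trivial: simply choose the target chart $U_2$ from the outset to have $\mbox{diam}(U_2)<\varepsilon$; then $g(V)\subset U_2$ and $f(V)\subset U_2$ immediately give $\mbox{dist}(g(x),f(x))<\varepsilon$ on $V$, and the triangle-inequality step becomes unnecessary.
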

\begin{proof} Since the assertion is of local character  we may assume that $M=\mathbb{R}^n$. Composing with a  translation we may also assume that $x_0= f(x_0)=0$. Let $0<\delta <\epsilon $ be so small that the ball $\|x\|< \delta$ is mapped under $f$ to a set of diameter smaller than $\epsilon$. Let $\lambda \colon \mathbb{R}^n \to [0,1]$ be a continuous function such that $\lambda (x)=0$ if $\|x\| \leq \delta/2$ and $\lambda (x)=1$ if $\|x\| \geq \delta$. We define $g$ by the formula $g(x) := \lambda(x) f(x) + (1 - \lambda(x))x$ if $\|x\| \leq \delta$ and $g(x) = f(x)$ if $\|x\| \geq \delta$.
\end{proof}

\begin{proof}[Proof of Lemma \ref{Lemma1}]
 According to Definition \ref{DefinitionPerShrBox}, the same periodic shrinking box $K_f$ for $f$ is also a periodic shrinking box with the same period for all $g \in C^0(M)$ near enough $f$, proving the openness assertion.

We turn to the denseness assertion. Let   $f \in C^0(M)$ and $\e>0$. We will  construct $g \in C^0(M)$ and a periodic shrinking box $K_g$ for $g$ with ${{\rm diam}}(K)< \delta$,  such that  $\|g-f\|_{C^0} < \e.$
We suppose $\delta>0$   to be smaller than the $\e$-modulus of continuity of $f$.

By the Krylov-Bogolyubov theorem invariant measures exist (recall that the manifold $M$ is compact), and thus by the Poincar\'{e} Lemma, there exists a recurrent point $x_0 \in M$ for $f$. First assume that $x_0 \not \in \partial M$. So, there exists a   box $   B  \subset M$   with  ${{\rm diam}}( B)  <  \delta$   such that $x_0 \in  {{\rm int}}( B)$.
Since $x_0$ is a recurrent point, there exists a smallest $p \in \N$ such that $f^p(x_0) \in {{\rm int}}(B)$.
{  Taking ${B}$ slightly smaller if necessary, we can assume that $f^j(x_0) \not \in  B$ for all $j=1,2, \ldots, p-1$.}
 So, there exists a small compact box  $ U \subset {{\rm int}}(B) $  as in   Figure \ref{FigureLemma1},  such that  $x_0 \in {{\rm int}}( U), $    the sets $U, f( U), \ldots, f^{p-1}( U)  $ are pairwise disjoint, and
$f^p( U) \subset {{\rm int}}(B)$.

\begin{figure}
 \begin{center}
\includegraphics[scale=.5]{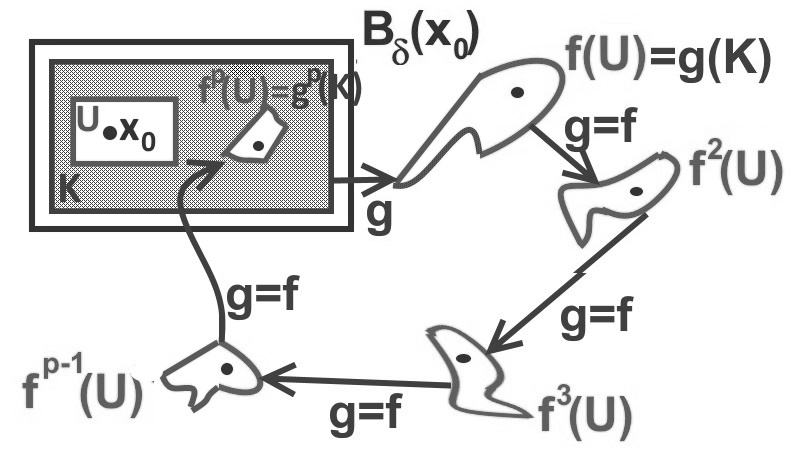}
\caption{Construction of   $g$ near $f$ with a periodic shrinking box $K$ for $g$.\label{FigureLemma1}}
\end{center}
\end{figure}

 Since $ U, f^p( U) \subset {{\rm int}}( B)$, there exists a box ${K}$ such that  $ U, f^p( U) \subset {{\rm int}}({K}) \subset {K} \subset {{\rm int}}( B),$
 and there exists   a homeomorphism  $\psi:  B \rightarrow  B$  such that
 $\psi(x)= x$ for all $x \in \partial { B}$, and $\psi (K) =  U$.

Finally, we construct $g \in C^0(M)$ as follows:
$$g(x):= f(x), \qquad \forall \ x \not \in  B, \ \ \ \ g(x) = f \circ \psi (x), \qquad \forall \ x \in  B.$$
 By construction, $K$ is a periodic shrinking box of $g$, say $K=K_g$; by the choice of  $\delta$ we have $\|g-f\| < \e$.

 Now, let us study the case for which $M$ is a compact manifold with boundary  and all the recurrent points of  $f$ belong to  $\partial M$. Choose one  such recurrent point $x_0 \in \partial M$. For any $\delta>0$, there exists a compact box $   B  \subset M$, with  ${{\rm diam}}( B)  \leq  \delta$  such that  $x_0 \in  \partial M \cap  B$. Since $x_0$ is recurrent, there exists a smallest natural number $p \geq 1$ such that $f^p(x_0) \in  B$. But $f^p(x_0)$ is also recurrent. So,  $f^p(x_0) \in \partial M \cap  B$.
The previous proof does not work as  is.
To overcome the problem, we choose  a new point $\widetilde x_0 \neq x_0$, near enough $x_0$, such that  $\widetilde x_0 \in {{\rm int}}( B) \setminus \partial M$ and $f^p(\widetilde x_0) \in  B.$  By applying Lemma \ref{Lemma1bis} and slightly perturbing $f$, if necessary, we can assume that the restriction of $f $ to a small neighborhood of $\widetilde x_0$  is  a local homeomorphism onto its image. Hence,  $f^p(\widetilde x_0) \in {{\rm int}}( B) \setminus \partial M.$
To conclude, we repeat the construction of $g$ and $K_g$ above
 replacing the recurrent point $x_0$ by $\widetilde x_0$.

Now, let us show that we can construct densely for  $g \in C^0(M)$ a periodic shrinking box $K_g$ such that the return map $g^p|_{K_g}$ is a homeomorphism onto its image. We repeat the beginning of the proof, up to the construction  of the points
 $x_0, f (x_0),  \ldots, f^p(x_0)$ such that
$ x_0, f^p(x_0) \in  {{\rm int}}( B)$ and $ f^j(x_0)     \not \in  B.$
Apply Lemma \ref{Lemma1bis}, slightly perturb $f$, if necessary,  inside  small open neighborhoods
$W_0, W_1, \ldots, W_{p-1}$ of the points $x_0, f (x_0),  \ldots, f^{p-1}(x_0)$ respectively, so that
$f|_{\overline W_i} $ is a homeomorphism onto its image for all $i=0, 1, \ldots, p-1$.
Finally, construct the box  $U$  (Figure \ref{FigureLemma1}),  but small enough so $f^j(U) \subset W_j$ for all
$j= 0,1,\ldots, p-1$,  and repeat the construction of $K=K_g$ and $g$ as above.
 \end{proof}
\begin{remark}
\label{RemarkPerturbInsideB}
\em Note that to obtain the dense property in the proof  of the first sentence  of  Lemma \ref{Lemma1},  we only need to perturb the map $f$ in the interior of the initial box $ {B}$ with diameter smaller than $\delta$.
\end{remark}

The following lemma is the homeomorphism version of  Lemma \ref{Lemma1}.

\begin{lemma}
\label{Lemma1b}
For any $\delta>0$, there exists an open and dense set of maps $f \in {{\rm Hom}}(M)$ that have a periodic shrinking box $K$ with ${{\rm diam}}(K)< \delta$.
\end{lemma}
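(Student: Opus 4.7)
The plan is to re-run the construction from Lemma \ref{Lemma1} with $\psi$ chosen to be a homeomorphism of $B$ that is the identity on $\partial B$, and then verify that the resulting perturbation $g=f\circ\psi$ is small not only in the $C^0$ norm but also in the $\mbox{Hom}$ norm. Openness is immediate and identical to the $C^0$ case: the conditions defining a periodic shrinking box (pairwise disjointness of $K, f(K),\ldots,f^{p-1}(K)$ and the strict inclusion $f^p(K)\subset\mbox{int}(K)$) are all $C^0$-open, hence a fortiori open in $\mbox{Hom}(M)$, whose topology is finer than that of $C^0(M)$.

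For density, fix $f\in\mbox{Hom}(M)$ and $\varepsilon>0$, and pick $\delta>0$ with $\delta<\varepsilon$ and small enough that $\mbox{dist}(x,y)<\delta$ implies $\mbox{dist}(f(x),f(y))<\varepsilon/2$. By Krylov--Bogolyubov and Poincar\'e recurrence, $f$ has a recurrent point $x_0$; first assume $x_0\in\mbox{int}(M)$. Copy the construction from Lemma \ref{Lemma1}: build a box $B\ni x_0$ with $\mbox{diam}(B)<\delta$, the smallest $p\ge 1$ with $f^p(x_0)\in\mbox{int}(B)$ and $f^j(x_0)\notin B$ for $1\le j<p$, homothetic sub-boxes $U\ni x_0$ and $K$ with $U\cup f^p(U)\subset\mbox{int}(K)\subset K\subset\mbox{int}(B)$ and $U,f(U),\dots,f^{p-1}(U)$ pairwise disjoint, and a homeomorphism $\psi:B\to B$ with $\psi|_{\partial B}=\mbox{id}$ and $\psi(K)=U$. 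Extend $\psi$ by the identity outside $B$ to a homeomorphism of $M$ and define $g:=f\circ\psi$, which then belongs to $\mbox{Hom}(M)$ and admits $K$ as a periodic shrinking box of period $p$.

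The one genuinely new ingredient compared to Lemma \ref{Lemma1} is the estimate controlling $g^{-1}$. Since $g^{-1}=\psi^{-1}\circ f^{-1}$ and $\psi^{-1}$ equals the identity off $B$ while mapping $B$ onto $B$,
\[
\|g-f\|_{C^0}=\sup_{x\in B}\mbox{dist}\bigl(f(\psi(x)),f(x)\bigr)<\varepsilon/2,
\]
\[
\|g^{-1}-f^{-1}\|_{C^0}\le\sup_{z\in B}\mbox{dist}\bigl(\psi^{-1}(z),z\bigr)\le\mbox{diam}(B)<\delta<\varepsilon,
\]
and therefore $\|g-f\|_{\mbox{Hom}}<\varepsilon$, as required.

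It remains to handle the case where every recurrent point of $f$ lies on $\partial M$. Pick such a point $x_0\in\partial M$ with smallest return time $p$ to a small box $B\ni x_0$; by continuity of $f^p$ we may choose $\tilde x_0\in\mbox{int}(M)\cap\mbox{int}(B)$ close enough to $x_0$ that $f^p(\tilde x_0)\in\mbox{int}(B)$. Because $f$ is a homeomorphism, $f(\partial M)=\partial M$ and hence $f^p(\tilde x_0)\in\mbox{int}(M)$ automatically; in particular no preliminary perturbation of $f$ in the spirit of Lemma \ref{Lemma1bis} is required. The interior-case construction then applies verbatim with $\tilde x_0$ in place of $x_0$. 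I do not anticipate any serious obstacle here, since the homeomorphism setting is in fact simpler than the $C^0$ one: the perturbation $g=f\circ\psi$ is automatically a homeomorphism, and the estimate on $\|g^{-1}-f^{-1}\|_{C^0}$ comes for free from the fact that $\psi^{-1}$ is supported in $B$.
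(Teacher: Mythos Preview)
Your proof is correct and follows essentially the same approach as the paper: re-run the construction of Lemma~\ref{Lemma1} with $g=f\circ\psi$ (automatically a homeomorphism) and check the $\mbox{Hom}$-distance. Your bound $\|g^{-1}-f^{-1}\|_{C^0}\le\mbox{diam}(B)<\delta$ is in fact slightly sharper than the paper's hint (which invokes the $\varepsilon$-modulus of continuity of $f^{-1}$), and your remark that the boundary case needs no preliminary perturbation because a homeomorphism preserves $\partial M$ is a valid simplification.
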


\begin{proof}
The proof of Lemma \ref{Lemma1} also works in the case that $f \in {{\rm Hom}}(M)$: in fact,    the $\e$-perturbed map $g$ constructed there  is a homeomorphism, and to obtain  $\|g-f\|_{{{\rm Hom}}(M)} < \e$ it is enough to reduce $\delta>0$  to be smaller than the $\e$-continuity modulus of $f$ and $f^{-1}$.
\end{proof}

\begin{remark}
\label{RemarkRecurrentPointIsPeriodic}

\em In the proof   of  Lemmas \ref{Lemma1} and \ref{Lemma1b}, if the starting recurrent point $x_0$ were a periodic point of period $p$, then the  periodic shrinking  box $K$  so constructed would  contain $x_0$ in its interior and have the same period $p$.
\end{remark}

\begin{lemma}
\label{Lemma2}  Assume  $m \geq 2$. Fix $\delta>0$ and $\Phi \in {\mathcal H} \cap \rm{Emb}(D^m)$ (recall Definition \ref{DefinitionModel}).
Each generic map   $f \in C^0(M)$ has a periodic shrinking box $K$ with ${{\rm diam}}(K)< \delta$ such that the return map $f^p|_K$ is topologically conjugated to a model map in ${\mathcal H}_\Phi$ (recall Definition \ref{Definition HsubPhi}). \em
\end{lemma}
\begin{proof} Let $K \subset M$ be a periodic shrinking box for $f$. Fix a homeomorphism   $\phi: K \rightarrow D^m$.

To prove the $G_{\delta}$ property, assume that $f \in C^0(M)$ has a periodic shrinking box $K$ with ${{\rm diam}}(K)< \delta$, such that
$\phi \circ f^p|_K \circ \phi^{-1} \in {\mathcal H}_{\Phi}$ (recall Definition \ref{Definition HsubPhi} and Lemma \ref{LemmaModelHEmbNonempty}).   From Definition \ref{DefinitionPerShrBox}, the same box $K$ is also periodic shrinking with period $p$  for all  $g \in {\mathcal N}$, where ${\mathcal N}\subset C^0(M)$ is an open neighborhood of $f$.
From Lemma \ref{LemmaModelHEmbNonempty}, ${\mathcal H}_{\Phi}$   is a nonempty $G_{\delta}$-set in $C^0(D^m)$, i.e., it is the nonempty countable intersection of open families ${\mathcal H}_n \subset C^0(D^m)$. We define
$${\mathcal V}_n:= \{g \in {\mathcal N} \colon \phi \circ g^p|_{K} \circ \phi^{-1} \in {\mathcal H}_n\}. $$
Since the restriction to  $K$ of a continuous map $g$, and the composition of continuous maps,  are continuous operations in $C^0(M)$, we deduce that  ${\mathcal V}_n $ is an open family in $  C^0(M)$. Besides
\begin{equation}
\label{eqn13}
\phi \circ g^p|_K \circ \phi^{-1}   \in {\mathcal H} = \bigcap_{n \geq 1} {\mathcal H}_n \ \  \mbox{ if  } \ \  g \in \bigcap_{n \geq 1}  {\mathcal V}_n    \subset C^0(M) . \end{equation}
In other words,  the set of   maps $g\in C^0(M)$   that have   periodic shrinking box $K$ with ${{\rm diam}}(K)< \delta$, such that the return map  $g^p|_K$ coincides, up to a conjugation, with a model map in ${\mathcal H}_\Phi$, is a $G_{\delta}$-set in $C^0(M)$.

To show the denseness fix $f \in C^0(M)$ and $\e >0$. Applying Lemma \ref{Lemma1},  it is not restrictive to assume that $f$ has a periodic shrinking box $K$ with ${{\rm diam}}(K) < \min\{\delta, \e\} $, such that   $f^p|_K$ is a homeomorphism onto its image. We will construct $g \in C^0 (M)$ to be $\e$-near $f$ and  such that  $\phi \circ g^p|_K \circ \phi^{-1}   \in {\mathcal H}$.

Choose a box $W $ such that $ f^{p-1}(K) \subset {{\rm int}}(W)$. If $p \geq 2$, take $W$ disjoint with $f^j(K)$ for all $j\in \{0,1, \ldots, p-2\}$ (Figure \ref{FigureLemma2}).
Let us see that we can assume that $W$ has  an arbitrarily small diameter. It is enough to prove that $f$ can be chosen such that $ f^{p-1}(K)$ has an arbitrarily small diameter. In fact, in the construction of $f$ in the proof of Lemma \ref{Lemma1}, we can choose the box $U$ (see Figure \ref{FigureLemma1}), after choosing $K$, as small as needed. So, we choose $U$ small enough such that the  $(p-1)$-th. image  of $U$  by the map before the perturbation, has a small diameter. (Note that we do not change $p$). After that, we construct the perturbed map, which we are calling $f$ here, as in the proof of Lemma \ref{Lemma1}: the image $f^{p-1}(K)$ of the new map $f $ coincides with the $(p-1)$-th. image of $U$ by the map before the perturbation (Figure \ref{FigureLemma1}). So, it has an arbitrarily small diameter, as required.

\begin{figure}
 \begin{center}
\includegraphics[scale=.5]{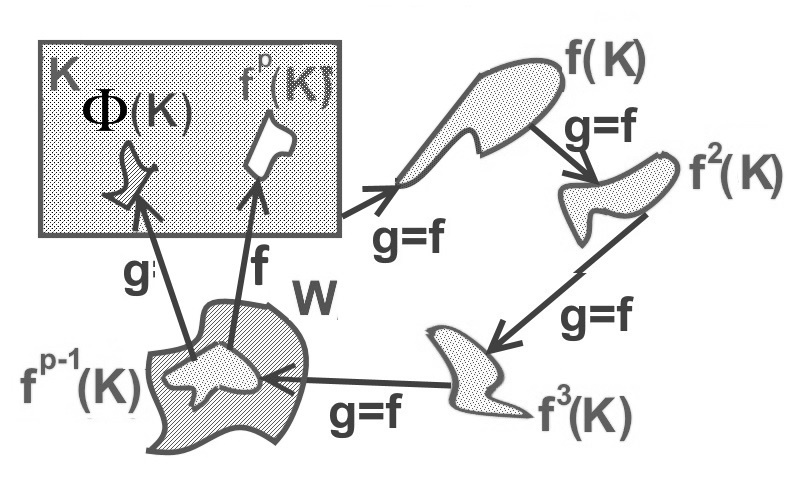}
\caption{Perturbation $g$ of $f$ such that $g^p|_K = \Phi$. \label{FigureLemma2}
}
\end{center}
\end{figure}

To construct $g \in C^0(M)$  (see Figure \ref{FigureLemma2}) we consider the chosen  $\Phi \in {\mathcal H}$ in the hypothesis, and let $g(x) := f(x)  $ if $ x \not \in W$~and
$$g(x) := \phi^{-1} \,  \circ \, \Phi \,  \circ  \, \phi\, \circ \, (f^p|_{K})^{-1} \circ      f (x), \qquad \forall \ x \in f^{p-1}(K).$$
This defines a continuous map $g: f^{p-1}(K) \cup (M \setminus W) \rightarrow M$ such that     $|g(x) - f(x)| < {{\rm diam}}(K) < \e $ for all $x \in f^{p-1}(K) \subset W  $ and $ g(x) = f(x) $ for all $ x \in M \setminus W.$
Applying the  Tietze Extension Theorem,   there exists a continuous extension of $g$ to the whole compact box $W$, hence to  $M$, such that $\|g - f\|_{C^0} < \e.$
Finally, by construction we obtain\ $$\displaystyle g^p|_K = g|_{f^{p-1}(K)} \circ f^{p-1}|_{K}=
\phi^{\text{-}1} \,  \circ \, \Phi \,  \circ  \, \phi\, \circ \, (f^p|_{K})^{\text{-}1} \circ      f  \circ f^{p-1}|_{K}= \phi^{\text{-}1} \,  \circ \, \Phi \,  \circ  \, \phi. \eqno{\qedhere}$$
  \end{proof}

\begin{lemma}
\label{Lemma2b}  Let $\delta>0$. Choose and fix $\Phi \in {\mathcal H} \cap \rm{Emb}(D^m)$.
A generic homeomorphism  $f \in {{\rm Hom}}(M)$ has a periodic shrinking box $K$ with ${{\rm diam}}(K) < \delta$,
such that the return map $f^p|_K$ is topologically conjugated to a model  embedding in ${\mathcal H}_{\Phi}$.
\end{lemma}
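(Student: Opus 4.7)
The strategy mirrors Lemma~\ref{Lemma2}, with two changes: the Tietze extension step (which only produces a continuous map) must be replaced by a construction that stays inside $\mbox{Hom}(M)$, and the model we land on must be a relative homeomorphism. Both issues are handled simultaneously by realizing the perturbation as left-composition with a boundary-fixing homeomorphism of the shrinking box, whose existence is supplied by Lemma~\ref{LemmaConstruccionModelPsifisPhi}.

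For the $G_\delta$ statement, the argument from Lemma~\ref{Lemma2} transfers essentially verbatim. Given a reference map $f$ with a periodic shrinking box $K$ of period $p$ and a conjugating homeomorphism $\phi\colon K\to D^m$, on the open neighborhood ${\mathcal N}\subset\mbox{Hom}(M)$ where $K$ remains a shrinking box of period $p$, the assignment $g\mapsto \phi\circ g^p|_K\circ\phi^{-1}$ is continuous into $\mbox{RHom}(D^m)$, because composition and inversion are continuous for $\|\cdot\|_{\mbox{Hom}}$. Writing ${\mathcal H}\cap\mbox{RHom}(D^m)=\bigcap_n{\mathcal H}_n$ as a countable intersection of open sets (Lemma~\ref{LemmaModelHRHomNonempty}), the pullbacks give a $G_\delta$ in $\mbox{Hom}(M)$.

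For denseness, given $f\in\mbox{Hom}(M)$ and $\e>0$, first apply Lemma~\ref{Lemma1b} to reduce to the case that $f$ itself has a periodic shrinking box $K$ of period $p$ with $\mbox{diam}(K)<\delta'$, where $\delta'\le\delta$ is to be chosen below the $\e$-modulus of continuity of $f^{-1}$. Fix a homeomorphism $\phi\colon K\to D^m$ and consider the conjugated return map $\tilde f:=\phi\circ f^p|_K\circ\phi^{-1}\in\mbox{RHom}(D^m)$; the shrinking property gives $\tilde f(D^m)\subset\mbox{int}(D^m)$. Apply Lemma~\ref{LemmaConstruccionModelPsifisPhi} to produce $\psi\in\mbox{Hom}(D^m)$ with $\psi|_{\partial D^m}=\mbox{id}$ and a model $\Phi:=\psi\circ\tilde f\in{\mathcal H}\cap\mbox{RHom}(D^m)$. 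Transport $\psi$ to $K$ by setting $\Psi_K:=\phi^{-1}\circ\psi\circ\phi\in\mbox{Hom}(K)$, which is the identity on $\partial K$, and extend by the identity outside $K$ to obtain $\Psi\in\mbox{Hom}(M)$. Define
$$g:=\Psi\circ f.$$
Since $\Psi$ is supported in $K$ and $f(K),\dots,f^{p-1}(K)$ are disjoint from $K$, induction gives $g^j|_K=f^j|_K$ for $0\le j\le p-1$ and $g^p|_K=\Psi\circ f^p|_K$, whence $\phi\circ g^p|_K\circ\phi^{-1}=\psi\circ\tilde f=\Phi$ and $g^p(K)=\phi^{-1}(\Phi(D^m))\subset\mbox{int}(K)$; thus $K$ is a periodic shrinking box of period $p$ for $g$ whose return map is conjugate to the required model.

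The main, and really only, technical point is that $g$ is close to $f$ in the full $\mbox{Hom}$-norm. The $C^0$-estimate $\|g-f\|_{C^0}\le\|\Psi-\mbox{id}\|_{C^0}\le\mbox{diam}(K)<\delta'$ is immediate, so the substantive half is the inverse: $g^{-1}=f^{-1}\circ\Psi^{-1}$ differs from $f^{-1}$ only where $\Psi^{-1}$ differs from the identity, i.e.\ inside $K$, and there $\mbox{dist}(\Psi^{-1}(y),y)<\delta'$; uniform continuity of $f^{-1}$ on the compact $M$ then forces $\|g^{-1}-f^{-1}\|_{C^0}<\e$ as soon as $\delta'$ is below the $\e$-modulus of continuity of $f^{-1}$. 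This is precisely the step at which the Tietze-extension construction of Lemma~\ref{Lemma2} would fail, and it is the reason for invoking Lemma~\ref{LemmaConstruccionModelPsifisPhi} to obtain a genuine boundary-fixing homeomorphism $\psi$.
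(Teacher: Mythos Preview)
Your proof is correct and follows essentially the same approach as the paper's. Both invoke Lemma~\ref{LemmaConstruccionModelPsifisPhi} to obtain the boundary-fixing $\psi$, and the perturbed homeomorphism you define as $g=\Psi\circ f$ (with $\Psi=\phi^{-1}\circ\psi\circ\phi$ on $K$, identity elsewhere) is literally the same map the paper defines piecewise as $g|_{W}=\phi^{-1}\circ\psi\circ\phi\circ f$ and $g|_{M\setminus W}=f$, where $W=f^{-1}(K)$; your global-composition formulation makes the verification that $g\in\mbox{Hom}(M)$ a bit more transparent, but the content and the $\|\cdot\|_{\mbox{Hom}}$ estimates are identical.
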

\begin{proof}
 We repeat the proof of the $G_{\delta}$-set property of Lemma \ref{Lemma2}, using ${\mathcal H} \cap {{\rm Emb}}(D^m)$ instead of ${\mathcal H}$, and ${{\rm Hom}}(M)$ instead of $C^0(M)$.

To show the denseness fix  $f \in {{\rm Hom}}(M)$ and $\e >0$. Applying Lemma \ref{Lemma1}, it is not restrictive to assume that $f$ has periodic shrinking boxes of arbitrarily small diameters. Let $\delta \in (0,\e)$ be smaller the the $\e$-modulus of continuity of $f$ and $f^{-1}$. Consider a periodic shrinking box $K$ with $\mbox{diam}(K) < \delta$ (Lemma \ref{Lemma1b}).
Fix a homeomorphism  $\phi: K \rightarrow D^m$. We will construct $g \in {{\rm Hom}} (M)$ to be $\e$-near $f$ in ${{\rm Hom}}(M)$, with  $\phi \circ g^p|_K \circ \phi^{-1} =\Phi \in {\mathcal H} \cap {{\rm Emb}}(D^m)$.

From Definition \ref{DefinitionPerShrBox} we know that the boxes $K, f(K), \ldots, f^{p-1}(K) $ are pairwise disjoint and that $f^p(K) \subset {{\rm int}}(K)$.   Denote $W := f^{-1}(K)$. Since $f$ is a homeomorphism, we deduce that $W$ is a box as in Figure \ref{FigureLemma2}, such that
 $ W \cap f^j( K) = \emptyset $ for all $  j  = 0,1, \ldots, p-2$ if $ p \geq 2,$ and $  f^{p-1}(K) \subset {{\rm int}}(W)$.  Since ${{\rm diam}}(K) < \delta$  we have
 ${{\rm diam}}(W) < \e.$

  Consider $\phi \circ f^p|_K \circ \phi^{-1} \in {{\rm Emb}}(D^m)$.  { Applying Lemma \ref{LemmaConstruccionModelPsifisPhi},  }  there exists a homeomorphism  $\psi: D^m \rightarrow D^m$   such that
 $$\psi|_{\partial D^m} = \mbox{id}|_{\partial D^m}, \ \ \ \ \ \psi \circ \phi \circ f^p|_K \circ \phi^{-1} = \Phi \in {\mathcal H} \cap {{\rm Emb}}(D^m).$$
 So, we can construct $g \in {{\rm Hom}}(M)$ such that $g(x) := f(x) $ for all $ x \not \in W$, and $ g(x) := \phi^{-1}\circ \psi \circ \phi \circ          f (x) $ for all $x \in W.$
Since $\psi|_{\partial D^m} $ is the identity map, we obtain $g|_{\partial W} = f|_{\partial W}$. Thus, the above equalities define a continuous map $g : M \rightarrow M$. Moreover $g$ is invertible because     $g|_W : W \rightarrow K$  is a composition of homeomorphisms, and $g|_{M \setminus W} = f|_{M \setminus W}: M \setminus W  \rightarrow M \setminus K$  is also a homeomorphism. So, $g \in {{\rm Hom}}(M)$.  Moreover, by construction we have    $|g(x) - f(x)| < {{\rm diam}}(K) <  \e $ for all  $  x \in W , $ and $  g(x) = f(x) $ for all $x \not \in  W. $ Also the inverse maps satisfy
 $|g^{-1}(x) - f^{-1}(x)| < {{\rm diam}}(f^{-1}(K)) =  {{\rm diam}}(W)  < \e$ for all $ x \in K, $ and $g^{-1}(x) = f^{-1}(x)$ for all $ x \not \in  K.  $
Therefore   $\|g -f\|_{{{\rm Hom}}} < \e.$

Finally, let us check that    $g^p|_K$ is topologically conjugated to $\Phi$:
  $$g^p|_K = g|_{f^{p-1}(K)} \circ f^{p-1}|_ K  = g|_{W} \circ f^{p-1}|_ K =$$
  $$   \phi^{-1}\circ \psi \circ \phi \circ          f \circ f^{p-1}|_K = $$ $$ \phi^{-1}\circ (\psi \circ \phi \circ          f ^p|_K \circ  \phi^{-1}) \circ \phi   =  \phi^{-1} \circ \Phi \circ \phi.\hfill \eqno{\qedhere}$$
  \end{proof}

  \begin{remark}
  \label{RemarkPerturbInsideK}
  \em In the   proof of the dense property in  Lemmas \ref{Lemma2} and \ref{Lemma2b}, once a periodic shrinking box $K$ is constructed with period $p \geq 1$, we only need to perturb the map $f$  inside  $W \cup \bigcup_{j=0}^{p-1} f^j(K)$, where $W= f^{-1}(K)$ if $f$ is a homeomorphism, and ${{\rm int}}(W) \supset f^{p-1}(K)$ otherwise. In both cases, by reducing the set $U$ of  Figure \ref{FigureLemma1}  from the very beginning,  we  can construct $W$ such that  ${{\rm diam}}(W)< \e$ for a previously specified small $\e>0$.
  \end{remark}

\begin{proof}[Proof of Theorems \ref{Theorem1} and \ref{Theorem3}]
 From Lemmas \ref{Lemma2} and \ref{Lemma2b}, a generic map $f \in C^0(M)$ and also a generic $f \in {{\rm Hom}}(M)$, has a periodic shrinking box $K$ such that the return map
  $f^p|_K: K \rightarrow {{\rm int}}(K) $ is conjugated to a model map $ \Phi \in {\mathcal H}.$  We consider the homeomorphism $ \phi ^{-1}:  K \to D^m$
 such that
  $\phi^{-1} \circ f^p \circ \phi   = \Phi \in {\mathcal H}$.
 Lemma \ref{LemmaMain} states that  every map $\Phi \in {\mathcal H}$ has a $\Phi$-invariant mixing measure $\nu$ with infinite metric entropy for $\Phi$. Consider the   push-forward  measure $\phi _*  \nu$,  defined by
 $(\phi_*  \nu) (B):= \nu (\phi^{-1}(B \cap K))$ for all the Borel sets $B \subset M$.
By construction,  $\phi_*  \nu$ is supported  on $K \subset M$.
 Since $\phi$ is a conjugation between $\Phi$ and $f^p|_K$, the push-forward measure $\phi_* \nu$ is $f^p$-invariant and mixing  for $f^p$  and moreover $h_{\phi_* \nu}(f^p) = + \infty$.

 From $\phi_*\nu$, we will construct an $f$-invariant and $f$-ergodic measure $\mu$  supported on $\bigcup_{j=0}^{p-1} f^j(K)$, with infinite metric entropy for $f$. Precisely, for each Borel set $B \subset M$, define
 \begin{equation} \label{eqn15} \mu(B):= \frac{1}{p} \sum_{j= 0}^{p-1} (f^j)_*   (\phi_* \nu) (B \cap f^j(K)).\end{equation}
 Applying Equality (\ref{eqn15}), and the fact that $\phi_* \nu$  is $f^p$-invariant and  $f^p$-mixing,  it is standard to check that $\mu$ is $f$-invariant and $f$-ergodic.
From the convexity of the metric entropy function,  we deduce that $$h_{\mu}(f^p) = \frac{1}{p}\sum_{j=0}^{p-1} h_{(f^j)_* (\phi_* \nu)} (f^p) = + \infty.$$
Finally, recalling that  $ h_{\mu}(f^p) \leq p \,  h_{\mu}(f)$ for any $f$-invariant measure $\mu$ and any natural number $p \geq 1$, we conclude that $h_{\mu}(f)= + \infty$.
 \end{proof}

 \section{Good sequences of periodic shrinking boxes} \label{sectionCleanSequences}
\noindent In this section we  prove  Theorems \ref{Theorem2} and \ref{Theorem4}. Throughout this section we assume that ${{\rm dim}}(M) \geq 2$. In the case that $M$ is  a 1-dimensional manifold,  Theorem \ref{Theorem2} can be proven repeating the proof of the 2-dimensional case, after  replacing Definition \ref{DefinitionModel}   by Definition \ref{definitionModelDim1}.

\begin{definition} \em
\label{definitionCleanSequence}  Let $f \in C^0(M)$ and let
 $K_1, K_2, \ldots, K_n, \ldots$  be a sequence of periodic shrinking boxes for $f$.
We call  $\{K_n\}_{n \ge 1}$  \em good \em if it has the following properties (see Figure \ref{FigureLemma3}):

\noindent $\bullet$ $\{K_n\}_{n \geq 1}$ is composed of pairwise disjoint boxes.

\noindent $\bullet$ There exists a natural number $p \geq 1$, independent of $n$, such that $K_n$ is a periodic shrinking box for $f$ whose period $p_n$ is a multiple of $p$.

\noindent $\bullet$ There exists a sequence $\{H_n\}_{n \geq 0}$ of periodic shrinking boxes, all with period $p$, such that $K_n \cup H_n \subset  H_{n-1} , \ K_n \cap H_n = \emptyset$ for all $n \geq 1$, and ${{\rm diam}}(H_n) \rightarrow 0$ as $n \rightarrow + \infty$.
 \end{definition}

\noindent {\bf Remark.}   Definition \ref{definitionCleanSequence} implies  that $ \bigcap_{n \geq 1} H_n = \{x_0\}$,
where $x_0$ is periodic with period $p$.  Furthermore, for any $j \ge 0$ we have
$$d(f^j(K_n), f^j(x_0)) \leq {{{\rm diam}}}(f^j(H_{n-1})) \leq \displaystyle \max_{0 \leq k \leq p-1} {{{\rm diam}}}(f^k(H_{n-1})) \stackrel{\scriptscriptstyle n \to \infty}{\to} 0,$$
 and thus
\begin{equation} \label{eqnUniformLimitCleanSequence}  \lim_{n \rightarrow + \infty}\sup_{j \geq 0} d(f^j(K_n), f^j(x_0))= 0.\end{equation}

We will construct a good sequence of   periodic shrinking boxes for maps that are arbitrarily near a given $f$. We start by constructing the zeroth level boxes as follows:

\begin{lemma} \label{lemmaLevel0}
Let $f \in C^0(M)$ (resp.\ $f \in {\rm Hom}(M)$) and $\e, \delta >0$. Then, there exists  $g_1 \in C^0(M)$  (resp.\ $g_1 \in {\rm Hom}(M)$ ),  periodic shrinking boxes $H_0$ and $ K_1$ for $g_1$  with periods $p$ and $p_1$  respectively, where $p_1$ is multiple of $p$, and a periodic point $x_0 \in {{{\rm int}}}(H_0)$ for $g_1$ such  that  $K_1 \subset H_0\setminus \{x_0\}$,  $$ g_1^{p_1}|_{K_1} \mbox { is topologically conjugated to }\Phi_1 \in {\mathcal H}  \ \ \mbox{ and } $$ $${{{\rm diam}}}(H_0)<   {\delta} , \ \ \ \|g_1 - f\| <  \frac{\e}{2}.$$
\end{lemma}

\begin{figure}
 \begin{center}
\includegraphics[scale=.5]{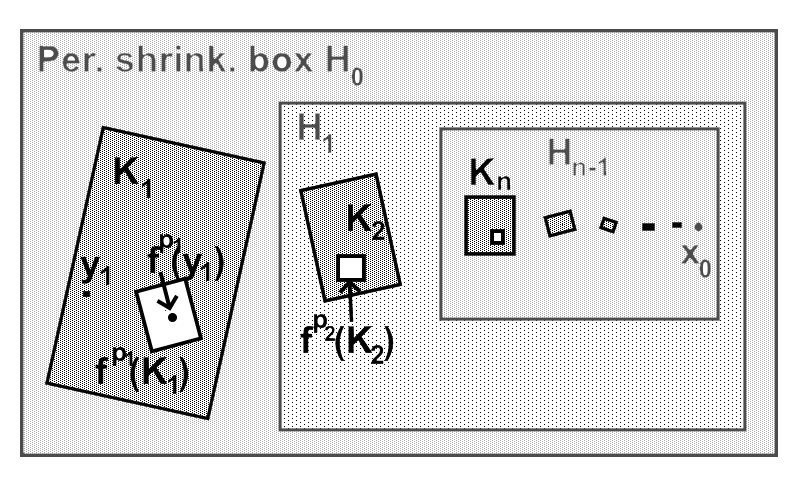}
\caption{Construction of a good sequence of periodic shrinking boxes. \label{FigureLemma3}}
\end{center}
\end{figure}

\begin{proof}
A generic  map  $\widetilde f\in C^0(M)$ (resp.\   $\widetilde f\in {{\rm Hom}}(M)$) has a periodic shrinking box $H_0$
with   period $p \geq 1$,  such that ${{{\rm diam}}}(H_0)<   {\delta} $ and $\widetilde f^p|_{H_0} $ is conjugate to a model map  $ \Phi\in {\mathcal H}$
(Lemmas \ref{Lemma2}, resp.\  \ref{Lemma2b}). Fix such an $\widetilde f$ in the $(\e/6)$-neighborhood of $f$. The same box $H_0$ will be a shrinking periodic box for the map $g_1$ to be constructed.

Since $\widetilde f^p: H_0 \rightarrow {{{\rm int}}}(H_0) \subset H_0$ is continuous,  by the Brouwer Fixed Point Theorem
there exists a periodic point $x_0 \in {{{\rm int}}}(H_0)$ of period  $p$.
 Lemma \ref{LemmaMain} and the argument at the end of the proofs of Theorems \ref{Theorem1} and \ref{Theorem3},   show that the map $\widetilde f$ has an ergodic measure $\mu$ supported on $\bigcup_{j=0}^{p-1} \widetilde f^j(H_0)$ such that $h_{\mu}(\widetilde f) = + \infty$. Therefore, by Poincar\'{e} Recurrence Lemma, there exists some recurrent point $y_1   \in {{{\rm int}}}(H_0) $ for $\widetilde f$. We can choose  such recurrent point $y_1 \neq x_0$ (see Figure \ref{FigureLemma3}) because $\mu$ is not supported on the orbit of the periodic point $x_0$ (recall that $\mu$ has infinite entropy and by construction its support is a perfect set).

Choose  $\delta_1>0$ small enough and
construct a box $B_1$ such that $y_1 \in {{\rm int}}(B_1)$,    ${{\rm diam}}(B_1) < \delta_1$,  the $\widetilde f$-orbit of  $x_0 $ (which is finite) does not intersect the finite piece of the $\widetilde f$-orbit of   $ B_1 $ (until the first iterate of $y_1$ is in $H_0$)   and $  B_1 \subset {{{\rm int}}} (H_0).   $  We repeat the proofs of the dense property of
 Lemmas \ref{Lemma1} and \ref{Lemma1b}, using the recurrent point $y_1$ instead of $x_0$, and the box $B_1$ instead of $B$ (see Figure \ref{FigureLemma1}).
We  deduce that  there exists an $(\e/6)$-perturbation ${\hat f}$ of $\widetilde f$, and a periodic shrinking box $K_1 \subset B_1$ for ${\hat f}$, with some period  $p_1\geq p$  (see Figure \ref{FigureLemma3}). Moreover, ${\hat f}$ coincides with $ \widetilde f$ in $M \setminus  {{{\rm int}}}(B_1)$
 (recall Remark  \ref{RemarkPerturbInsideB}).   Therefore, the same periodic point $x_0$ of $\widetilde f$ survives for ${\hat f}$. Besides, by the openness of the existence of the periodic shrinking box $H_0$, the same initial box $H_0$ is still periodic shrinking with period $p$ for ${\hat f}$, provided that ${\hat f}$ is near enough $\widetilde f$. So, the compact sets of the family $\{{{\hat f}}^j(H_0)\}_{j= 0,1,\ldots, p-1} $ are pairwise disjoint, and ${{\hat f}}^p(H_0) \subset {{{\rm int}}}(H_0)$. This implies that the period $p_1$ of the new periodic shrinking box $K_1$  for ${\hat f}$  is a multiple of $p$.

 Now, we apply the proofs of the dense property of
 Lemmas \ref{Lemma2} and \ref{Lemma2b}, using the shrinking box $K_1$ instead of $K$ (see Figure \ref{FigureLemma2}).
We  deduce that  there exists an $(\e/6)$-perturbation $g_1$ of ${\hat f}$, such that $K_1$ is still a periodic shrinking box  for $g_1$  with  the same period  $p_1$,  but moreover, the return map $g_1^{p_1}|_{K_1}$ is now topologically conjugated to $\Phi_1 \in {\mathcal H} $.

Consider a box  $W_1 $ satisfying ${\hat f}^{p_1-1}(K_1) \subset W_1 \subset K_1$,
  small enough so its ${\hat f}$-orbit is disjoint from the ${\hat f}$-orbit of the periodic point $x_0$.
  Taking into account Remark \ref{RemarkPerturbInsideK}, we can construct $g_1$ to coincide  with $ {\hat f}$ in the complement of $W_1 \bigcup \Big( \bigcup_{j=0}^{p_1-1}{{\hat f}}^j(K_1)   \Big)$.   Then, if $g_1$ is sufficiently near $ {\hat f}$, the point $x_0$  is still  periodic   of period $p$ for $g_1$, and besides $H_0  $  is still a  periodic shrinking box  of period $p$ for $g_1$ (recall that such property is open).
Finally,
$$\|g_1 - f\| < \|g_1 - {\hat f}\| + \|{\hat f} - \widetilde f\|+  \|\widetilde f - f\| < \frac{\e}{6} +\frac{\e}{6} + \frac{\e}{6} = \frac{\e}{2}. \eqno{\qedhere}$$
 \end{proof}

Assume that we  have constructed the $j$-th  level of periodic shrinking boxes  for all  $0 \le j \leq n-1$ of a good sequence. We will construct the   periodic shrinking boxes of the $n$-th  level  by perturbing the given map once more. Let us first define the following family  of maps.

\begin{definition}
\label{definitionUpToLeveln}
\em Fix $\delta>0$, and let $p,n$ be natural numbers such that $p,n \geq 1$. We denote by ${\mathcal G}_{p, n, \delta} \subset C^0(M)$ the family of all the maps $g  \in C^0(M)$ such that there exists $n$   boxes $K_1,   \ldots, K_n$   satisfying the following properties:

\noindent $\bullet$ $\{K_j\}_{1 \leq j \leq n }$ is composed of pairwise disjoint boxes.

\noindent $\bullet$ For all $1 \leq j \leq n$ the box $K_j$ is a periodic shrinking  for $g$ with   period $p_j$ that is a  multiple of $p$, and
$$g^{p_j}|_{K_j} \mbox{ is topologically conjugated to }\Phi_j \in {\mathcal H}.$$

\noindent $\bullet$ There exists  a sequence $\{H_j\}_{0 \leq j \leq n-1}$ of  periodic shrinking boxes   for $g$, all of period $p$, and a periodic point $x_{n-1}\in{{\rm int}}(H_{n-1})$ of period $p$, such that $K_j \cup H_j \subset  H_{j-1} , \ K_j \cap H_j = \emptyset$ for all $1 \leq j \leq n-1 $, $K_n \subset H_{n-1}\setminus {x_{n-1}}$ and ${{\rm diam}}(H_j) < \delta/2^j$ for all $0 \le j \le n-1$ (see Figure
\ref{FigureLemma3}).
\end{definition}

\begin{lemma} \label{lemmaLeveln} Fix $\e>0$, $\delta>0$ and the
    natural numbers $n,p \geq 1$. Assume that    $g_n \in {\mathcal G}_{p,n, \delta}$  or $g_n \in {\mathcal G}_{p,n, \delta} \cap {{\rm Hom}}(M)$ .

Then, there exists an $\e/2^{n+1}$-perturbation $g_{n+1}$ of $g_n$,  such that $g_{n+1} \in {\mathcal G}_{p,n+1, \delta}$  or $g_{n+1} \in {\mathcal G}_{p,n+1, \delta} \cap {{\rm Hom}}(M)$,   respectively.

Moreover,
 for all $j=1, \ldots, n$ the same boxes $K_1, K_2, \ldots, K_n$ and $H_0, H_1, \ldots, H_{n-1}$ are shrinking periodic for the new map $g_{n+1}$  and for the given map $g_n$, with the same periods, and
$$ g_n^i|_{K_{j }}= g_{n+1}^i|_{K_{j }}\ \ \forall \ i= 1, \ldots, p_j. $$
\end{lemma}

\begin{proof} All the  perturbations  of  $g_n$ that we will construct  are sufficiently close to $g_n$  so that the same boxes $H_0, H_1, \ldots, H_{n-1}$ and $K_1, K_2, \ldots, K_n$ that are periodic shrinking for $ g_n$ are still periodic shrinking with the same periods for the perturbed maps. This is possible because the   periodic shrinking property of a box and its period, are open conditions.
Besides, we will only consider perturbations of $g_n$ that coincide with $g_n$ except in the interior of a finite number of boxes $B,W,$ etc, whose $g_n$-iterates, up to the  $(\max_{1\leq j \leq n}p_j)$-th iterate, are disjoint with all the boxes  of the family $\{g_n^i(K_j): 1 \leq j \leq n, \ 0 \leq i \leq p_j-1\}$.
Therefore, if such a perturbation $\widetilde g$ of $g_n$ is near enough $g_n$, then the iterates by $\widetilde g$ of the boxes $B, W,$ etc (where $\widetilde g$ differs from $g_n$) are still disjoint with the $g_n$-iterates of $K_j$. This implies that for all $1 \leq j \leq n$,
$$ g_n^i|_{K_{j }}= \widetilde g ^i|_{K_{j }}\ \ \forall \ i= 1, \ldots, p_j   \ \  \mbox{ and then} $$
$$ \widetilde g  ^{p_j}|_{K_{j }}=   g_n ^{p_j}|_{K_{j }}  \mbox{ is topologically conjugated to } \Phi_j \in {\mathcal H}. $$

Now let us perturb $g_n$ as above, in several steps, to construct the boxes $H_n$ and $K_{n+1}$.

 By   hypothesis,    $g_n$ has a periodic shrinking box $H_{n-1}$ of period $p$, a periodic point $x_{n-1} \in {{{\rm int}}} (H_{n-1})$ of  period  $p $,  and a periodic shrinking box $K_n \subset H_{n-1}\setminus \{x_{n-1}\}$ of period $p_n$,   multiple of $p$.  It also has periodic shrinking boxes $K_1, \ldots, K_{n-1}, K_n$ whose $g_n$-orbits are disjoint with the periodic orbit of $x_{n+1}$. So, we can construct a   box ${\widetilde B_{n}} \subset H_{n-1}$ containing the periodic point $x_{n-1}$ in its interior, whose $g_n$-orbit up to the $(\max_{1 \leq j \leq n} p_j)$-th iterate is disjoint from all the sets of the family $\{f^i(K_{j}): \ 1 \leq j \leq n, \ 0 \leq i \leq p_j-1\}$. Besides, we construct the box ${\widetilde B_{n}}$ such that ${{\rm diam}}({\widetilde B_{n}}) < \widetilde \delta/2^n$.
Repeating the proof of the density properties in Lemmas \ref{Lemma1} and \ref{Lemma1b} (putting $x_{n-1}$ instead of $x_0$),
 we construct an $\e/(3 \cdot 2^{n+1})$-perturbation $\widetilde g_n$ of $g_n$, near enough $g_n$
and a periodic shrinking box
$H_n \subset  {{{\rm int}}} (\widetilde B_{n})$
 for $\widetilde g_n$. Moreover, since $x_{n-1}$
 is a periodic point with period $p$ for $g_n$, the period of $H_n$ for $\widetilde g_n$ can be made equal to $p$
(see Remark \ref{RemarkRecurrentPointIsPeriodic}). By construction $H_n \subset \widetilde B_n \subset H_{n-1}$ is disjoint from $K_n$.
  To construct $\widetilde g_n$ we only needed to modify $g_n$ inside $\widetilde B_n$
(recall Remark \ref{RemarkPerturbInsideB}). Therefore, if $\widetilde g_n$ is near enough $g_n$, as  observed at the beginning, the same periodic shrinking boxes $H_0, H_1, \ldots, H_{n-1}$
and $K_1, K_2, \ldots, K_n$ of
$g_n$, are preserved for $\widetilde g_n$  with the same periods, and $\widetilde g_n$ coincides with $g_n$ on the $g_n$-orbit of the boxes $K_1, \ldots, K_n$.

Now, as in the proof of  Lemmas \ref{Lemma2} and \ref{Lemma2b}, we construct a new $\e/(3 \cdot 2^{n+1}) $-perturbation $  {\hat g}_n$ of $\widetilde g_n$, such that    $ {{\hat g}_n}^{p}|_{H_n}$ is conjugated  to a map in ${\mathcal H}$.  To construct $ {\hat g}_n$ we only need to modify $\widetilde g_n$ in    $  \widetilde W_n \cup \bigcup_{j=0}^{p-1} \widetilde g_n^{j} (H_n)$, where $ \widetilde W_n $ is a small neighborhood of $\widetilde g_n^{p-1}(H_n)$  (see Remark \ref{RemarkPerturbInsideK}). Since the $\widetilde g_n$-orbit of $H_n$ is disjoint from the $\widetilde g_n$ orbits of $K_j$ for all $1 \leq j \leq n$ (because $H_n$ and $K_j$ are disjoint periodic shrinking boxes for $\widetilde g_j$, we can choose   $W_n$ near enough  $\widetilde g_n^{p-1}(H_n)$ and ${\hat g}_n$ near enough $\widetilde g_n$ so ${\hat g}_n$ coincides with $\widetilde g_n$ on the orbit of the boxes $K_j$, as observed at the beginning.

  We conclude that the same shrinking boxes $K_1, \ldots, K_n; H_0, \ldots, H_{n-1}$ for $\widetilde g_n$ and $g_n$, are still periodic shrinking for ${\hat g}_n$, with the same periods and that ${{\hat g}_n}^{p_j}|_{K_j}= \widetilde g_n^{p_j}|_{K_j} $  which is conjugated to  $\Phi_j \in {\mathcal H}$ for all $j=1, \ldots, n$.

When modifying  $g_n$   to obtain $\widetilde g_n$  and   $  {\hat g}_n$, the periodic point $x_{n-1} \in {{\rm int}}(H_{n-1}) $ of period $p$ for $g_n$, may not be preserved as periodic for ${\hat g}_n$. But since $H_n \subset H_{n-1} \setminus K_n$ is a periodic shrinking box with period $p$ for $ {\hat g}_n$, by the Brouwer Fixed Point Theorem,  there   exists a    periodic point  $x_n \in {{{\rm int}}}(H_n)\setminus K_n$ for ${\hat g}_n$, with the same period $p$.

Since the return map ${{\hat g}_n}^p|_{H_n}$  is conjugated to a model map, there exists an ergodic measure $\mu$  with infinite entropy  for ${\hat g}_n$ (see Lemma \ref{LemmaMain}), supported on the ${\hat g}_n$-orbit of $H_n$. Therefore, there exists a recurrent point $y_n \in {{\rm {int}}}(H_n)$. We can choose such recurrent point $y_n \neq x_{n}$, because $\mu$  is not supported on the periodic orbit of $x_n$ (in fact, $\mu$ has infinite entropy).

We now argue as in the proof of Lemma \ref{lemmaLevel0}, (using ${\hat g}_n$, $H_{n}$ and $x_n$ in the role of $\tilde f$, $H_0$ and $x_0$) to construct an $\e/(3 \cdot 2^n)$- perturbation $g_{n+1}$ of ${\hat g}_n$,   and a box $K_{n+1} \subset H_n\setminus \{x_n\}$ that is periodic shrinking for $g_{n+1}$ of period $p_{n+1}$ which is a multiple of $p$, and such that
$g_{n+1}^{p_{n+1}}|_{ K_{n+1}}$ is topologically conjugated to a model map.

As observed at the beginning, if choosing $g_{n+1}$ near enough ${\hat g}_n$, the boxes $H_0, \ldots, H_n$ and $K_1, \ldots, K_n$ are still periodic shrinking for $g_{n+1}$ with the same periods, and
$$g_{n+1}^{p_{j}}|_{ K_{j}} =  {{\hat g}}_{n }^{p_{j}}|_{ K_{j}} = g _{n }^{p_{j}}|_{ K_{j}}$$ is topologically conjugated to a model map, for all $1 \leq j \leq n$.

By construction we have $g_{n+1}  \in {\mathcal G}_{p,n, \delta} $ and
$$\|g_{n+1}- g_n\| \leq \|g_{n+1}- {\hat g}_n\| + \|{\hat g}_n - \widetilde g_n\| + \|\widetilde g_n - g_n\| < 3 \cdot \frac{\e}{3 \cdot 2^{n+1}} = \frac{\e}{2^{n+1}},$$
 as required.
\end{proof}
\begin{definition}
\label{definitionGdelta} \em Fix $\delta >0$.
We denote by ${\mathcal G}_\delta \subset C^0(M)$ the family of all the   maps $g \in \bigcup_{p\geq 1}\bigcap_{n \geq 1} {\mathcal G}_{p,n, \delta}$ such that for all $n \geq 1$, the  boxes $H_0, \ldots, H_{n-1}$ and $K_1, \ldots, K_n$ of Definition \ref{definitionUpToLeveln} for $g$ as belonging to ${\mathcal G}_{p,n, \delta}$ coincide with the boxes   for $g$ as belonging to ${\mathcal G}_{p,n+1, \delta}$.
\end{definition}
\begin{lemma}
\label{lemmaDensity} Fix $\delta >0$.
The family $ {\mathcal G}_{ \delta}$ is dense in $C^0(M)$ and its intersection with ${\rm Hom}(M)$ is dense in ${\rm Hom}(M)$.
\end{lemma}
\begin{proof}
Let $f \in C^0(M)$ or $f \in {{\rm Hom}}(M)$, and $\e>0$. We will construct $g \in {\mathcal G}_{\delta}$ such that  ${\rm dist}(g, f) \leq \e$.

Applying Lemma \ref{lemmaLevel0}, there exists $p \geq 1$ and $g_1 \in {\mathcal G}_{p,1,\delta}$ such that ${\rm dist}(g_1, f) \leq \e/2$. Denote by $H_0, K_1 \subset H_0$ the periodic shrinking boxes for $g_1$ as a map of ${\mathcal G}_{p,1,\delta}$ (recall Definition \ref{definitionUpToLeveln} for $n=1$). By continuity, there exists $0<\e _1< \e$ such that for all $g$ in the $\e_1$-neighborhood of $g_1$, $H_0$ is still a periodic shrinking box of period $p$ for $g$.

By induction on $n \geq 1$, (Lemma \ref{lemmaLeveln} provides the inductive step), there exists a sequence of maps $g_1, g_2, \ldots, g_n, \ldots$ and a strictly decreasing sequence of positive real numbers $\e > \e_1> \e_2 > \ldots > \e_n> \ldots$ such that, for all $n \geq 1$,
$g_n  \in    {\mathcal G}_{p,n,\delta}$, ${\rm dist}(g_{n+1}, g_n) \leq \e_n/2^n$, the boxes $H_0, H_1, \ldots, H_{n-1}$ and $K_1, K_2, \ldots, K_n$ are still periodic shrinking for $g_{n+1}$ with the same periods $p, p_1, p_2, \ldots, p_n$ as for $g_n$, and $g_{n+1}= g_n$ when restricted to the $g_n$-orbits of the boxes $K_j$ for $j=1,\ldots, n$. Besides, for all $g$ in the $\e_n$-neighborhood of $g_n$,  $ H_{n-1}$ is still a periodic shrinking box of period $p$ for $g$.

 Since
 $\|g_{n+1} - g_{n}\|  \leq  {\e}/{2^{n+1}}$ for all $n \geq 1$
the sequence  $\{g_n\}_{n \geq 1}$ is   Cauchy in $C^0(M)$ or ${\rm Hom}(M)$, let $g$ be the limit map.  Since   $g_n$ is an $\e$-perturbation of $f$ for all $n \geq 1$, the limit map $g$ satisfies ${\rm dist}(g, f) \leq \e$.

Besides, by construction $g_k (x) = g_n(x)$ for all $x \in \bigcup_{j=0}^{p_n} g_n^j(K_n)$, for all $k \geq n \geq 1$. So $g_k^{p_n}|_{K_n} = g_n^{p_n}|_ {K_n}$
is topologically conjugated to  $ \Phi_n \in {\mathcal H}$ for all $n \geq 1$ and for all $k \geq n$ (recall that $g_n \in {\mathcal G}_{p,n,\delta}$ and Definition \ref{definitionUpToLeveln}).  Thus
$K_n$ is still a periodic shrinking box for $g$ of period $p_n$, and $g^{p_n}|_{K_n} = g_n^{p_n}|_ {K_n}   $
is topologically conjugated to  a model map for all $n \geq 1$. Finally, for all $k > n \geq 1$
we have, by construction, ${\rm dist}(g_k, g_n) < \e _n (1/2^{n+1} + 1/2^{n+2} + \cdots + 1/2^k)\leq \e_n $. So, taking limit as $k \rightarrow + \infty$, we obtain ${\rm dist}(g , g_n) \leq \e_n $. This implies that
 $H_{n-1}$ is still a periodic shrinking box of period $p$ for $g$  as it was for $g_n$. We have proved that $g \in {\mathcal G}_{\delta}$, as required.
\end{proof}

\begin{lemma} \label{lemma3}  \label{lemma4}
For $m \geq 1$  a generic  map  $f \in C^0(M)$,  and for $m \geq 2$ a generic homeomorphism $f$  has a  good sequence $\{K_n\}$ of  boxes, such that the return map $f^{p_n}|_{K_n}$  is topologically conjugated to a   model $\Phi_n \in {\mathcal H}$. 
\end{lemma}

\begin{proof} To see the $G_{\delta}$ property assume that $f $ has a good sequence $\{K_n\}_n$ of periodic shrinking boxes. For each fixed $n$, the boxes $K_n$ and $H_n$ are also periodic shrinking with  periods $p_n $ and $p$ respectively, for all $g$ in an open set  in $C^0(M)$ or in ${{\rm Hom}}(M)$ (see Definition \ref{DefinitionPerShrBox}).  Taking the intersection of such open sets  for all $n \geq 1$, we deduce that the same  sequence $\{K_n\}$  is also a  good sequence of periodic shrinking boxes for all $g$ in a $G_{\delta}$-set. Now,   assume that besides $f^{p_n}|_{K_n}$ is topologically conjugated to a model map for all $n \geq 1$. From Lemmas \ref{Lemma2} and \ref{Lemma2b}, for each fixed $n \geq 1$,  the family of continuous maps  $g$ such that  the return map $g^{p_n}|_{K_n}$ is topologically conjugated to a model, is a $G_{\delta}$-set in  $C^0(M)$ or in ${{\rm Hom}}(M)$. The (countable) intersection of  these $G_{\delta}$-sets, produces a $G_{\delta}$-set, as required.

To prove denseness, recall Definitions \ref{definitionUpToLeveln} and \ref{definitionGdelta}. Observe that the family of continuous maps or homeomorphisms that have a good sequence $\{K_n\}_{n \geq 1}$ of periodic shrinking boxes such that
the return map  to each $K_n$ is topologically conjugated to a model map, contains the family
 $ {\mathcal G}_{ \delta}$ (or the intersection of this family with ${\rm Hom}(M)$), for any value of $\delta>0$. Applying Lemma \ref{lemmaDensity}, this latter family is dense.
 \end{proof}

\begin{remark}
\label{RemarkLemma4} \em   As a consequence of Lemmas \ref{lemma4} and \ref{LemmaMain} (after applying the same arguments at the end of the proof  of Theorems \ref{Theorem1} and \ref{Theorem3}),    generic  continuous maps and homeomorphisms  $f$ have a sequence  of ergodic measures $\mu_n$,  each one supported on the $f$-orbit of a box $K_n$ of a good sequence $\{K_n\}_{n \geq 1}$ of periodic shrinking boxes for $f$,   satisfying $h_{\mu_n}(f) = + \infty $ for all $n \geq 1$.
\end{remark}

Let ${\mathcal M}$ denote the  metrizable space of Borel probability measures on a compact metric space $M$, endowed with  the weak$^*$ topology. Fix a metric  $ {{{\rm dist}}}^*$ in $\mathcal M$.

\begin{lemma}
\label{LemmaMeasureDistance}For all $\e >0$ there exists $\delta>0$ that satisfies the following property:
if $\mu, \nu \in {\mathcal M}$ and  $\{ B_1,  B_2, \ldots,  B_r\} $ is a finite family of pairwise disjoint compact balls $ B_i \subset M$, and if  \em
${\rm supp}(\mu) \cup {\rm supp}(\nu) \subset \bigcup_{i=1}^r  B_i ,$
and $ \mu( B_i) = \nu( B_i) , $ $  {{\rm diam}}( B_i) < \delta $ for all $ i= 1,2, \ldots, r, $
\em then \em  $ {{{\rm dist}}}^*(\mu, \nu) < \e.$
\end{lemma}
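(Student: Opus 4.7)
The approach I would take is to work with one concrete metric compatible with the weak-$*$ topology on $\mathcal M$, and then transfer the bound to the given $\mbox{dist}^*$ by the general fact that on a compact metrizable space (such as $\mathcal M$, which is compact by Banach--Alaoglu since $M$ is compact) any two compatible metrics are uniformly equivalent: the identity between the two compact metric spaces is continuous, hence uniformly continuous, in both directions. A convenient choice is
\[
d(\mu,\nu) := \sum_{k\geq 1} 2^{-k}\left|\int \varphi_k\, d\mu - \int \varphi_k\, d\nu\right|,
\]
where $\{\varphi_k\}_{k\geq 1}$ is a fixed countable dense family in the unit ball of $C(M,\R)$.

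The heart of the argument is an oscillation estimate. For any continuous $\varphi:M\to\R$ I would pick one point $x_i \in B_i$ for each $i \in \{1,\ldots,r\}$. Since $\mu$ and $\nu$ are probability measures supported on $\bigcup_i B_i$, and the $B_i$ are pairwise disjoint with $\mu(B_i) = \nu(B_i)$, the constants $\varphi(x_i)$ cancel on each $B_i$:
\[
\int \varphi \, d\mu - \int \varphi \, d\nu = \sum_{i=1}^{r} \int_{B_i}\bigl(\varphi - \varphi(x_i)\bigr)\, d(\mu - \nu).
\]
Uniform continuity of $\varphi$ on $M$ gives a modulus $\omega_\varphi(\delta)\to 0$ as $\delta\to 0$, so whenever $\mbox{diam}(B_i)<\delta$ for all $i$ the triangle inequality yields
\[
\left|\int \varphi\, d\mu - \int \varphi\, d\nu\right| \leq 2\omega_\varphi(\delta)\sum_{i=1}^{r}\mu(B_i) = 2\omega_\varphi(\delta).
\]

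To finish, given $\e>0$ I would first fix $\e'>0$ with the property that $d(\mu,\nu)<\e'$ implies $\mbox{dist}^*(\mu,\nu)<\e$ (by uniform equivalence). Then I would pick $N$ with $\sum_{k>N} 2^{-k}\cdot 2 < \e'/2$ (using $\|\varphi_k\|_\infty\leq 1$), and pick $\delta>0$ so small that $2\omega_{\varphi_k}(\delta)<\e'/2$ for every $k$ in the finite range $\{1,\ldots,N\}$. Plugging the oscillation estimate into the definition of $d$ then gives $d(\mu,\nu)<\e'$, hence $\mbox{dist}^*(\mu,\nu)<\e$, as required.

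The main obstacle I anticipate is not the computation, which is a routine use of uniform continuity plus the cancellation trick coming from $\mu(B_i)=\nu(B_i)$, but rather the passage from a convenient metric $d$ to the abstract metric $\mbox{dist}^*$ fixed in advance. This is purely formal and hinges on compactness of $\mathcal M$. A cleaner alternative is to bypass any specific metric by invoking the Kantorovich--Rubinstein characterization of the weak-$*$ distance, and noting that for a $1$-Lipschitz test function $\varphi$ one has $\omega_\varphi(\delta)\leq\delta$, so the oscillation bound immediately yields $\sup\{|\!\int\varphi\,d(\mu-\nu)|:\mbox{Lip}(\varphi)\leq 1,\,\|\varphi\|_\infty\leq 1\}\leq 2\delta$, which transfers to any compatible metric as before.
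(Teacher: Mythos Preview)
Your argument is correct: the oscillation estimate $\bigl|\int\varphi\,d\mu-\int\varphi\,d\nu\bigr|\le 2\omega_\varphi(\delta)$ follows exactly from the cancellation $\mu(B_i)=\nu(B_i)$ as you wrote, and the passage from your auxiliary metric $d$ to the fixed $\mbox{dist}^*$ via uniform equivalence of compatible metrics on the compact space $\mathcal M$ is legitimate. The paper does not actually give a proof here; it simply cites \cite[Lemma~4]{CT2017} for the interval case and says to replace intervals by balls, so your self-contained argument is in fact more detailed than what appears in the text. Your alternative via the Kantorovich--Rubinstein duality is also fine and arguably cleaner, since it avoids the truncation of the series altogether.
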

\begin{proof}
If $M= [0,1]$ the proof is in \cite[Lemma 4]{CT2017}. If $M$ is any other compact manifold of finite dimension $m \geq 1$, with or without boundary, just copy the proof of \cite[Lemma 4]{CT2017} by substituting the pairwise disjoint compact intervals $I_1, I_2, \ldots, I_r \subset [0,1]$ in that proof, by the family of pairwise disjoint compact boxes $ B_1,  B_2, \ldots,  B_r \subset M$.
\end{proof}

\begin{proof}[Proofs of Theorems \ref{Theorem2} and \ref{Theorem4}]
Fix  $\e>0$, let  $\delta>0$ satisfy Lemma \ref{LemmaMeasureDistance}.
Applying Lemma \ref{lemma4}, generic continuous maps or homeomorphisms $f $ have a good sequence of periodic shrinking boxes $\{K_n\}_{n \geq 1}$, and a sequence $\{\mu_n\}$ of ergodic $f$-invariant measures  such that $h_{\mu_n}(f) = + \infty$ (see Remark \ref{RemarkLemma4}) and such that
${\rm supp}(\mu_n) \subset \bigcup_{j= 0}^{p_n-1} f^j(K_n),$
where $p_n  = l_n \cdot p$, multiple of $p$, is the period of the shrinking box $K_n$.
Taking into account that  $\{f^j({K_n})\}_{0 \leq j \leq p_n-1}$ is a family of pairwise disjoint compact sets, and $f^{p_n}(K_n) \subset {{{\rm int}}}(K_n)$, we obtain for each $j \in \{0,1,\dots,p_n\}$
$$ \mu_n(f^j(K_n)) = \mu_n(f^{\shortminus j}(f^j(K_n)))= \mu_n(f^{\shortminus j}(f^j(K_n))\cap {\rm supp}(\mu_n)   )=
\mu_n(K_n).$$
Since
$1= \sum_{j=0}^{p_n-1}   \mu_n(f^j(K_n)) = p_n \cdot \mu_n (K_n);$ we obtain
$$\mu_n(f^j(K_n)) = \mu_n(K_n) = \frac{1}{p_n} = \frac{1}{l_n \, p}, \qquad \forall \  j= 0,1, \ldots, p_n.$$

From Definition \ref{definitionCleanSequence}, there exists a periodic point $x_0$ of   period $p $ such that  $\lim _{n \rightarrow + \infty} \sup_{j \geq 0} {{\rm H{dist}}}(f^j(K_n), f^j(x_0)) = 0,$ where
${{\rm Hdist}}$ denotes the Hausdorff distance. Therefore, there exists $n_0 \geq 1$ such that
 $\displaystyle d  (f^j  (K_n), f^j(x_0)  )  <   {\delta'}  $ for all $ j \geq 0 $ and for all $ n \geq n_0,$ where $\delta' < \delta/2$ is chosen such that
  the family of compact balls $  B_0$, $  B_1$, \ldots, $  B_{p-1}$,  centered at the points $ f^j(x_0)$ and with radius $\delta'$,  are pairwise disjoint.  We obtain
$f^j(K_n) \subset  B_{j \pmod{p}}$ for all $ j \geq 0 $ and for all $  n \geq 0. $
Therefore,
$$\mu_n (  B_j) = \frac{1}{p}, \qquad \forall \ j=0,1, \ldots, p-1, \ \ \ \  \forall \ n \geq n_0.$$
Finally, applying Lemma \ref{LemmaMeasureDistance}, we conclude
 ${{\rm dist}}^*(\mu_n, \mu_0) < \e $ for all $n \geq n_0,$
where  $\mu_0 := (1/p) \sum_{j= 0}^{p-1} \delta_{f^j(p)}$
is the $f$-invariant probability measure supported on the periodic orbit of $x_0$, which has zero entropy.
\end{proof}

\section{Open questions}
Lipschitz maps have finite topological entropy and thus can not have infinite entropy invariant measures.
The following question
arises: do Theorems  \ref{Theorem1} and \ref{Theorem3}   hold  also for maps with more regularity than continuity but lower regularity than Lipschitz? For instance, do they   hold for  H\"{o}lder-continuous maps?

A priori  there is a chance to answer this question positively in situations where the topological entropy is generically infinite, for example 
for one-dimensional H\"{o}lder continuous endomorphisms and  also   for  bi-H\"{o}lder homeomorphisms  on manifolds of dimension  2 or larger. In both case generic infinite entropy is known  \cite{FHT}, \cite{FHT1}.  This is a good question for further reasearch.

Theorems \ref{Theorem1} and \ref{Theorem3}    are proved for compact manifolds, we wonder if some of the results also hold in   other compact metric spaces  that  are not manifolds? Do they hold if the space   is a Cantor set $K$?

If the aim were just to construct    $f  \in {{\rm Hom}}(K)$ with ergodic measures  with infinite metric entropy, the answer is positive. Theorem  \ref{Theorem3}  holds  for the 2-dimensional square $D^2:=[0,1]^2$. One of the steps of the proof  consists in constructing  a Cantor set   $\Lambda \subset D^2$,  and a homeomorphism  $\Phi$ on $M$  that leaves  $\Lambda$ invariant, and  possesses an  $\Phi$-invariant ergodic measure supported on $\Lambda$ with   infinite metric entropy (see Lemma \ref{LemmaMain} and Remark \ref{RemarkMainLemma}).
Since any pair of Cantor sets $K$ and $\Lambda$ are homeomorphic, we deduce that any Cantor set $K$ supports a homeomorphism $f$ and an $f$-ergodic measure  with infinite metric entropy.

If the purpose were to prove that such homeomorphisms are generic in ${{\rm Hom}}(K)$, the answer is negative.
 On the one hand, there also exists   homeomorphisms on $K$ with finite, and even zero,  topological entropy, for example  $f \in {{\rm Hom}}(K)$  conjugated to the homeomorphism  on the attractor  of a Smale horseshoe, or     to the attractor of the $C^1$- Denjoy example on the circle. On the other hand,  it is known  that each homeomorphism on a Cantor set $K$ is topologically locally unique; i.e., it is conjugated to any of its small perturbations \cite{AGW}. Therefore, the topological entropy is locally constant in ${{\rm Hom}}(K)$. We conclude that the homeomorphisms on the Cantor set $K$ with infinite metric entropy, that do exist, are not  dense  in $\hbox{{{\rm Hom}}}(K)$; hence they are not generic.

\end{document}